\documentclass[a4paper,11pt]{article}
\usepackage{inputenc}
\usepackage{authblk}
\usepackage[table]{xcolor}
\usepackage{color}
\usepackage{amsmath}
\usepackage{amssymb}
\usepackage{amsthm}
\usepackage{booktabs}
\usepackage{xcolor,graphicx,float}
\usepackage{hyperref}
\usepackage{soul}
\usepackage{indentfirst}
\hypersetup{colorlinks=true,
	linkcolor=blue,
	anchorcolor=blue,
	urlcolor=black,
	citecolor=blue}
\usepackage{enumitem}
\setlist[itemize]{itemsep=0pt, topsep=0pt}
\usepackage{tikz}
\usepackage{subcaption}
\usepackage{caption}
 
\definecolor{yalv}{RGB}{150,194,78}
\definecolor{tonglv}{RGB}{43,174,133}
\usepackage[
a4paper,
textwidth=16cm,
textheight=23cm,
centering
]{geometry}

\theoremstyle{plain}
\newtheorem{theorem}{\bf Theorem}[section]
\newtheorem{lemma}[theorem]{\bf Lemma}

\newtheorem{corollary}[theorem]{\bf Corollary}

\theoremstyle{remark}

\newtheorem{problem}{\bf Problem}
\newtheorem{construction}{\bf Construction}
\newtheorem{claim}{\bf Claim}

\newtheorem{assumption}{\bf Assumption}

\numberwithin{equation}{section}

\title{\bf Structure of large $t$-intersecting families I: Stability for the Hilton--Milner--Frankl theorem}
\author[1]{Jie Wen\thanks{E-mail: \text{jwen@mail.bnu.edu.cn}}}
\author[1]{Benjian Lv\thanks{Corresponding author. E-mail: \text{bjlv@bnu.edu.cn}}}
\affil[1]{\small Laboratory of Mathematics and Complex Systems (Ministry of Education), School of Mathematical Sciences, Beijing Normal University, Beijing 100875, China}
\date{}

\begin{document}
	\renewcommand{\baselinestretch}{1.2}
	\maketitle
	\begin{abstract}
		\medskip
We study the structure of large $t$-intersecting
families. A family of $k$-subsets of an $n$-set is $t$-intersecting if every two of its members intersect in at least $t$ elements. A $t$-intersecting family is non-trivial if no $t$-subset is contained in all its members. We prove several stability results for the seminal Hilton--Milner--Frankl theorem. First, for any fixed $\eta,\theta\in(0,1)$, we prove that if $k/t\geq1+\eta$ and $n=\Omega(tk\log k)$, then every non-trivial $t$-intersecting family of size greater than $(1+\theta)|\mathcal{K}|$ is a subfamily of one of the two extremal families in the theorem, where $\mathcal{K}$ is an explicit large non-trivial $t$-intersecting family. The key ingredient in the proof is a removal lemma. We also obtain a classification of all $t$-intersecting families with size bounded below by $|\mathcal{K}|$ minus an explicit lower-order term, provided that $k\geq t+4\geq6$ and $n\geq t+6\cdot\max\{(t+2)^2, k(k-t)\}$. This strengthens results of Cao--Lv--Wang (2021) and Frankl (2025) for a broad range of $k$ and $t$ (for example, when $k-t\geq2\sqrt{t}$). As an application of this classification, we determine the largest $t$-intersecting families for each prescribed lower bound on $t$-diversity not exceeding $t(n-k)$, thereby obtaining $t$-intersection versions of results of Han and Kohayakawa (2017) and Kupavskii (2025).  To establish these results, we develop techniques based on the spread approximation method and the $t$-cover method, which may be useful for other intersection problems.

		\noindent {\em AMS classification:}\;05D05
		
		\noindent {\em Keywords:}\;Erd\H{o}s--Ko--Rado theorem;\;Hilton--Milner--Frankl theorem;\;$t$-intersecting families;\\Spread approximation;\;Stability;\;$t$-cover;\;$t$-diversity
		
	\end{abstract}
	\section{Introduction}
We use $[n]=\{1,2,\ldots,n\}$ to denote the standard $n$-set, and set $[i,j]=\{i,i+1,\ldots,j\}$ for $i\leq j$. A \emph{family} is just a collection of sets. Denote by $\binom{[n]}{k}$ the family of all $k$-subsets of $[n]$. A family $\mathcal{F}$ is \emph{$t$-intersecting} if $|F\cap F'|\geq t$ for all $F,F'\in\mathcal{F}$. Two families $\mathcal{G}_1,\mathcal{G}_2$ of subsets of $[n]$ are \emph{isomorphic}, denoted $\mathcal{G}_1\cong\mathcal{G}_2$, if one can be obtained from the other by a permutation of $[n]$. We write $\mathcal{F}\lesssim\mathcal{G}$ if $\mathcal{F}$ is isomorphic to a subfamily of $\mathcal{G}$. Otherwise, we write $\mathcal{F}\not\lesssim\mathcal{G}$.

The celebrated Erd\H{o}s--Ko--Rado theorem determines maximum-sized $t$-intersecting families in $\binom{[n]}{k}$. The study of intersection problems has since developed into a significant area of extremal combinatorics. For a systematic introduction to the field, we refer the reader to survey papers \cite{Frankl-Tokushige-2016, Ellis-book}. Let us present the following \emph{exact Erd\H{o}s--Ko--Rado theorem}.
\begin{theorem}[\cite{Erdos-Ko-Rado-1961,Frankl-1976,Wilson-1984}]\label{EKR}
	Let $k\geq t\geq 1$ and $n\geq(t+1)(k-t+1)$. If $\mathcal{F}\subseteq\binom{[n]}{k}$ is $t$-intersecting, then
	$$|\mathcal{F}|\leq\binom{n-t}{k-t}.$$
	Moreover, if $n>(t+1)(k-t+1)$, then equality holds if and only if $\mathcal{F}\cong\left\{F\in\binom{[n]}{k}:[t]\subseteq F\right\}$.
\end{theorem}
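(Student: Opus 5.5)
The plan is to follow Wilson's linear-algebraic proof in the Johnson scheme, which gives the bound for the whole range $n\geq(t+1)(k-t+1)$ at once and also identifies where equality can occur. Shifting arguments are easier, but in the form of \cite{Frankl-1976} they only cover $t$ large.

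Let $X=\binom{[n]}{k}$ and let $A_i$ ($0\leq i\leq k$) be the matrices of the Johnson scheme, with $(A_i)_{F,F'}=1$ iff $|F\cap F'|=k-i$. Their common eigenspaces are $V_0\oplus V_1\oplus\cdots\oplus V_k$, where $V_j$ is the $j$-th harmonic space, of dimension $\binom nj-\binom n{j-1}$. A $t$-intersecting family $\mathcal F$ is an independent set in the graph whose edges join pairs with $|F\cap F'|<t$, that is, $i>k-t$.

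The first step is to build a symmetric matrix $B=\sum_{i>k-t}c_iA_i$, supported only on these non-edges. The aim is that, with eigenvalue $\beta_0$ on $V_0$ and $\beta_j$ on $V_j$,
\[
\beta_j\geq -\mu\ \text{ for all } j, \qquad \beta_1=\cdots=\beta_t=-\mu, \qquad \frac{\mu}{\beta_0+\mu}=\frac{\binom{n-t}{k-t}}{\binom nk}.
\]
Once $B$ exists, the Hoffman--Delsarte ratio bound applies to the characteristic vector $x$ of $\mathcal F$. Since $x^{\top}Bx=0$, decomposing $x$ into its eigenspace components gives
\[
0\geq \beta_0\frac{|\mathcal F|^2}{|X|}-\mu\Bigl(|\mathcal F|-\frac{|\mathcal F|^2}{|X|}\Bigr),
\]
which is exactly $|\mathcal F|\leq\binom{n-t}{k-t}$.

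To build $B$, I would follow Wilson \cite{Wilson-1984}. I would express $B$ through the inclusion matrices $W_{ik}$ ($i$-sets versus $k$-sets), whose products $W_{ik}^{\top}W_{ik}$ act on $V_j$ by $\binom{k-j}{i-j}\binom{n-i-j}{k-j}$ for $j\leq i$ and by zero otherwise. Concretely, I would take $B$ as an alternating-sign combination of the matrices $\overline W_{ik}^{\top}\overline W_{ik}$ with complemented sets, for $i=0,\ldots,t-1$, with coefficients chosen so that two things happen: the entries on pairs with $|F\cap F'|\geq t$ cancel, and the eigenvalues on $V_1,\ldots,V_t$ coincide. The key step, and the one I expect to be the main obstacle, is then to show $\beta_j\geq-\mu$ for $j>t$ exactly when $n\geq(t+1)(k-t+1)$. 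For this I would write $\beta_j$ in terms of Eberlein (dual Hahn) polynomial values, establish a recursion in $j$, and show that $\beta_j+\mu$ alternates in a controlled way. The sign of $\beta_{t+1}+\mu$ reduces to $n-(t+1)(k-t+1)$, and the condition $n\geq(t+1)(k-t+1)$ handles the larger $j$ through a monotonicity estimate.

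For the uniqueness statement, take $n>(t+1)(k-t+1)$. The inequality $\beta_j>-\mu$ is then strict for $j>t$, so equality in the ratio bound forces $x\in V_0\oplus\cdots\oplus V_t$, which is the column space of $W_{tk}^{\top}$. Thus $x=W_{tk}^{\top}y$ for some function $y$ on $\binom{[n]}{t}$, i.e. $\mathcal F$ is a linear combination of $t$-stars. To finish, I would use the fact that $\mathcal F$ is $t$-intersecting, so for every $F\in\mathcal F$ the whole region $\{G:|G\cap F|<t\}$ lies outside $\mathcal F$; restricting $x=W_{tk}^{\top}y$ to such regions should force $y$ to be supported on a single $t$-set $T$. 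This then gives $\mathcal F=\{F\in\binom{[n]}{k}:T\subseteq F\}$. If this last combinatorial step becomes messy, a fallback is to combine the size equality with stability: a shifting argument would reduce to shifted families, and for those the equality case is read off from the Ahlswede--Khachatrian-type classification of extremal shifted families.
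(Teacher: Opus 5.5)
The paper gives no proof of Theorem~\ref{EKR}. It is quoted from \cite{Erdos-Ko-Rado-1961,Frankl-1976,Wilson-1984}, so the benchmark is Wilson's argument, whose overall structure you reproduce. The outer layer of your proposal is sound. Once $B$ exists, your ratio-bound computation gives $|\mathcal F|\le\binom{n-t}{k-t}$, and if $\beta_j>-\mu$ for all $j>t$, equality forces $x\in V_0\oplus\cdots\oplus V_t$. Your third spectral requirement is in fact automatic: apply $x^{\top}Bx=0$ to the characteristic vector of a $t$-star, which lies in $V_0\oplus\cdots\oplus V_t$; this yields $\mu/(\beta_0+\mu)=\binom{n-t}{k-t}/\binom nk$ whenever $\beta_1=\cdots=\beta_t=-\mu\neq0$. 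What is missing is exactly the part that constitutes Wilson's theorem, and the recipe you give for it cannot work.

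\textbf{The construction of $B$.} Each row of $\overline W_{ik}$ is an alternating sum of rows of the inclusion matrices $W_{i'k}$ with $i'\le i$. Hence $\overline W_{ik}^{\top}\overline W_{ik}$ annihilates $V_{i+1},\dots,V_k$, and any combination over $0\le i\le t-1$ has $\beta_t=0$. That forces $\mu=0$, contradicting $\mu/(\beta_0+\mu)=\binom{n-t}{k-t}/\binom nk$.

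Equivalently, the $(F,F')$-entry $\sum_i c_i\binom{n-2k+|F\cap F'|}{i}$ is a polynomial of degree at most $t-1$ in $|F\cap F'|$. Once $k\ge 2t-1$ it cannot vanish at all of $t,\dots,k$ without vanishing identically. For $t=1$ your recipe gives a multiple of $J$, whereas the only admissible $B$ is a multiple of the disjointness matrix $A_k$. Matrices with the correct support are, for instance, $W_{jk}^{\top}\overline W_{jk}$ with $k-t+1\le j\le k$, whose $(F,F')$-entry is $\binom{k-|F\cap F'|}{j}$.

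Incidentally, $W_{ik}^{\top}W_{ik}$ acts on $V_j$ by $\binom{k-j}{i-j}\binom{n-i-j}{k-i}$, not by $\binom{k-j}{i-j}\binom{n-i-j}{k-j}$; compare row sums for $i=1$.

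\textbf{The inequality $\beta_j\ge-\mu$ for $j>t$.} This is the more serious gap. It carries the substance of the theorem and is the only place where $n\ge(t+1)(k-t+1)$ enters, yet it is announced, not proved: you give no formula for $\beta_j$, no recursion and no monotonicity estimate.

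The one specific claim you make is also inaccurate. For $t=1$, the matrix $B=A_k/\binom{n-k-1}{k-1}$ has $\mu=1$ and $\beta_j=(-1)^j\binom{n-k-j}{k-j}/\binom{n-k-1}{k-1}$. So $\beta_2+\mu>0$ for every $n$, and (for $k\ge3$) the constraint that becomes tight at $n=2k$ is $\beta_3\ge-\mu$. Similarly, for $t=2$, $k=5$, $n=12$ the admissible $B$ is unique up to scaling and gives $\beta_3+\mu>0=\beta_4+\mu$. In these cases the critical index is $t+2$, not $t+1$. As written, your proposal only reduces the theorem to Wilson's key lemma.

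\textbf{The equality step.} By contrast, this part can be completed along the lines you suggest. Write $x=W_{tk}^{\top}y$ and fix $F\in\mathcal F$; then $x_G=0$ whenever $|G\cap F|<t$. Take $G=A\cup G'$ with $A\subseteq F$, $|A|=a<t$ and $G'\subseteq[n]\setminus F$, and induct on $a$. Full row rank of the inclusion matrix $W_{t-a,k-a}$ on the ground set $[n]\setminus F$ gives $y_T=0$ whenever $T\not\subseteq F$. Hence $y$ is supported on $t$-subsets of $\bigcap\mathcal F$, so $\mathcal F$ lies in a $t$-star and, by its size, equals it. The rank condition needs $n\ge 2k+t$. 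This follows from $n>(t+1)(k-t+1)$ except when $k\le t+1$, where the claim is elementary.

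Your fallback via shifting and the Ahlswede--Khachatrian classification would be circular, since that classification already contains Theorem~\ref{EKR}.
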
	
 Denote by $n_0(k,t)$ the least possible value for $n$ such that the upper bound above holds. Determining or estimating $n_0(k,t)$ was a major open problem. The original paper \cite{Erdos-Ko-Rado-1961}, published in 1961, proved $n_0(k,1)=2k$, and established $n_0(k,t)\leq(k-t)\binom{k}{t}^3+t$. In 1976, Frankl \cite{Frankl-1976} made a breakthrough by determining $n_0(k,t)=(t+1)(k-t+1)$ for $t\geq 15$, and proving $n_0(k,t)\leq2(t+1)(k-t+1)$ for all $t$. In 1984,  Wilson \cite{Wilson-1984} resolved the problem via an ingenious algebraic method. In 1997, Ahlswede and Khachatrian \cite{Ahlswede-Khachatrian-1997} established the famous \emph{complete intersection theorem}, which determines largest $t$-intersecting families in $\binom{[n]}{k}$ for all $n,k$ and $t$. This also settled a conjecture of Frankl \cite{Frankl-1976}.
 
Besides finding largest $t$-intersecting families, another longstanding and active direction of research concerns the  problem of characterizing the structure of large $t$-intersecting families. A $t$-intersecting family $\mathcal{F}\subseteq\binom{[n]}{k}$ is \emph{trivial} if $|\cap_{F\in\mathcal{F}}F|\geq t$, and is \emph{non-trivial} otherwise. The family $\mathcal{F}$ is \emph{maximal} if $\mathcal{F}\cup\{A\}$ is not $t$-intersecting for any $A\in\binom{[n]}{k}\setminus\mathcal{F}$. A maximal  trivial $t$-intersecting family in $\binom{[n]}{k}$ is said to be a \emph{full $t$-star}. Note that for $n>n_0(k,t)$, the full $t$-star centered at $[t]$ is, up to isomorphism, the unique  family achieving the maximum size. For $n\leq n_0(k,t)$, the family
\begin{equation}\label{familya}
	\mathcal{A}(n,k,t):=\left\{F\in\binom{[n]}{k}:|F\cap [t+2]|\geq t+1\right\},
\end{equation}
which is known as a \emph{Frankl family} (see e.g.,  \cite{Frankl-Tokushige-2016}), has size not less than that of a full $t$-star. Another important example of a  non-trivial $t$-intersecting family is
\begin{equation}\label{familyh}
	\mathcal{H}(n,k,t):=\left\{F\in\binom{[n]}{k}:[t]\subseteq F,\;F\cap[t+1,k+1]\neq\emptyset\right\}\cup\left\{[k+1]\setminus\{i\}:i\in [t]\right\}.
\end{equation}
We next recall the famous Hilton--Milner--Frankl theorem.
\begin{theorem}[\cite{Hilton-Milner-1967,Frankl-1978}]\label{HM}
	Let $k\geq t\geq 1$ and $n>n_1(k,t)$. If $\mathcal{F}\subseteq\binom{[n]}{k}$ is a non-trivial $t$-intersecting family, then
	$$|\mathcal{F}|\leq\max\{|\mathcal{A}(n,k,t)|,  |\mathcal{H}(n,k,t)|\}.$$
	Moreover, if equality holds, then
\begin{itemize}
	\item[\rm(i)] $\mathcal{F}\cong\mathcal{H}(n,k,t)$ for $k\geq2t+2$, or
	\item[\rm(ii)] $\mathcal{F}\cong\mathcal{A}(n,k,t)$ for $k\leq 2t+1$, or $(k,t)=(3,1)$ and $\mathcal{F}\cong\mathcal{H}(n,3,1)$.
\end{itemize}	
\end{theorem}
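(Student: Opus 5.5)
We prove Theorem~\ref{HM} with the $t$-cover (kernel) method. Throughout, $n$ is taken large relative to $k$ and $t$, which is what the hypothesis $n>n_1(k,t)$ gives us. Let $\mathcal{F}$ be a non-trivial $t$-intersecting family of maximum size, so $\mathcal{F}$ is maximal. Let $\tau_t(\mathcal{F})$ denote the least size of a set $T$ with $|T\cap F|\geq t$ for all $F\in\mathcal{F}$; such a $T$ is a \emph{$t$-cover}. Since $\mathcal{F}$ is non-trivial, $\tau_t(\mathcal{F})\geq t+1$. Every member of $\mathcal{F}$ is itself a $t$-cover, so $\tau_t(\mathcal{F})\leq k$.

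\textbf{Counting via covers.} The standard branching bound says that each $F\in\mathcal{F}$ contains some $t$-cover of size $\tau_t(\mathcal{F})$, and that there are at most $\binom{k}{t}^{\tau_t(\mathcal{F})-t}$ minimal covers of each size. Hence, if $\tau_t(\mathcal{F})=s$, then
$$|\mathcal{F}|\leq \binom{k}{t}^{s-t}\binom{n-s}{k-s}.$$
For $s\geq t+2$ this is $O(n^{k-t-2})$. On the other hand, both $|\mathcal{A}(n,k,t)|$ and $|\mathcal{H}(n,k,t)|$ are of order $n^{k-t-1}$. The lower bound for $\mathcal{H}$ is $(k-t+1)\binom{n-t-1}{k-t-1}-O(n^{k-t-2})$; the lower bound for $\mathcal{A}$ is $(t+2)\binom{n-t-2}{k-t-1}$. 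So for large $n$ the extremal $\mathcal{F}$ must have $\tau_t(\mathcal{F})=t+1$.

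\textbf{Case $\tau_t(\mathcal{F})=t+1$.} Let $\mathcal{T}$ be the family of $(t+1)$-sets that are $t$-covers of $\mathcal{F}$. Members of $\mathcal{T}$ pairwise $t$-intersect: otherwise a member of $\mathcal{F}$ avoiding the needed overlap would contradict maximality, which we argue using the large number of sets through each cover. A $t$-intersecting family of $(t+1)$-sets is either a sunflower (all members contain a common $t$-set) or lies inside a single $(t+2)$-set.

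First, suppose $|\mathcal{T}|\geq 2$ and the members do not share a common $t$-set. Then $\mathcal{T}\subseteq\binom{[t+2]}{t+1}$ up to isomorphism. Sets of $\mathcal{F}$ meeting $[t+2]$ in at most $t$ points number at most $O(n^{k-t-2})$, and maximality then forces $\mathcal{F}\subseteq\mathcal{A}(n,k,t)$ once $\mathcal{T}$ is all of $\binom{[t+2]}{t+1}$. A more careful count shows that the extremal configuration has $\mathcal{T}=\binom{[t+2]}{t+1}$, giving $\mathcal{F}=\mathcal{A}(n,k,t)$.

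Otherwise all covers contain a common $t$-set, say $[t]$, and $\mathcal{T}=\{[t]\cup\{x\}: x\in X\}$. The sets of $\mathcal{F}$ not containing $[t]$ form a non-empty family $\mathcal{F}'$. Each $G\in\mathcal{F}'$ must contain $X$ and meet $[t]$ in at least $t-1$ points; hence $|X|\leq k-t+1$. Sets containing $[t]$ must then meet every $G\in\mathcal{F}'$ in at least $t$ points. When $|X|=k-t+1$, the family $\mathcal{F}'$ consists of the sets $([t]\cup X)\setminus\{i\}$ with $i\in[t]$. The sets containing $[t]$ must meet $X$, which gives exactly $\mathcal{H}(n,k,t)$. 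When $|X|<k-t+1$, we bound $|\mathcal{F}|$ by $|X|\binom{n-t-1}{k-t-1}+O(n^{k-t-2})$, which is smaller.

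\textbf{Main obstacle: comparison.} The key step is comparing $|\mathcal{A}(n,k,t)|\approx(t+2)\binom{n}{k-t-1}$ with $|\mathcal{H}(n,k,t)|\approx(k-t+1)\binom{n}{k-t-1}$ to leading order. This gives $\mathcal{H}$ when $k-t+1>t+2$, that is $k\geq 2t+2$, and $\mathcal{A}$ when $k\leq 2t+1$. At the tie point $k=2t+1$, and in the small case $(k,t)=(3,1)$, the second-order terms must be computed exactly. For $(3,1)$ the two families coincide in size, which explains the exception. The delicate part is controlling the $O(n^{k-t-2})$ error terms so that they do not overturn the leading-order comparison, both here and in the subcases with $|\mathcal{T}|<t+2$ or $|X|<k-t+1$. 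This is what determines $n_1(k,t)$.
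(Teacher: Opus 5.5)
The paper quotes this theorem from Hilton--Milner and Frankl and does not prove it, so there is no internal proof to match against. On its own terms, your outline is the classical $t$-cover argument, and its skeleton is sound for $n$ large. You reduce to $\tau_t(\mathcal F)=t+1$, then split the $(t+1)$-covers $\mathcal T$ into ``inside a $(t+2)$-set'' versus ``sunflower with a $t$-kernel''. The paper's Lemmas \ref{lemmafin-stru1-single} and \ref{lemmafin-stru2} run the same dichotomy quantitatively. In the range of Theorem \ref{thmlarge}, that theorem together with Lemma \ref{lemmacountingfamily} reduces the statement to comparing $|\mathcal A(n,k,t)|$ with $|\mathcal H(n,k,t)|$. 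However, one step is a genuine gap and several others are wrong as written.

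\textbf{The gap: the sunflower case with $|X|\le k-t$.} The bound $|X|\binom{n-t-1}{k-t-1}+O(n^{k-t-2})$ is true, but you give no argument, and the obvious one fails.
For $|X|=2$, the members of $\mathcal F'$ contain $([t]\setminus\{i\})\cup X$. Counting them trivially gives $t\binom{n-t-2}{k-t-1}$, which pushes the leading coefficient to $t+2$ and ties with $\mathcal A$. So it proves nothing, and in particular no uniqueness, when $k\le 2t+1$.
For $|X|=1$, the members of $\mathcal F'$ only contain $t$-sets, and the trivial count is of order $n^{k-t}$.
The missing observation is that the relevant $(t+1)$-sets are not $t$-covers:
\begin{itemize}
\item for $|X|=2$, $([t]\setminus\{i\})\cup X\notin\mathcal T$;
\item when $\mathcal T=\{T\}$, a member containing a $t$-set $Y\subset T$ but not $T$ contains some $Y\cup\{y\}\neq T$ (branch on a member meeting $Y$ in fewer than $t$ points);
\item a member containing $[t]$ but missing $X$ contains $[t]\cup\{w\}$ with $w\in G\setminus([t]\cup X)$, for a fixed $G\in\mathcal F'$.
\end{itemize}
Each such non-cover has degree at most $(k-t+1)\binom{n-t-2}{k-t-2}$ by Lemma \ref{lemmaind}. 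This is exactly the estimate (\ref{equlemmafin-stru11}) used in Lemma \ref{lemmafin-stru1-single}.

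\textbf{Smaller points.}
\begin{itemize}
\item The claim that every $F\in\mathcal F$ contains a $t$-cover of size $\tau_t(\mathcal F)$ is false. For $k\ge t+2$, the minimum $t$-covers of $\mathcal H(n,k,t)$ are $[t]\cup\{x\}$ with $x\in[t+1,k+1]$, and none lies in $[k+1]\setminus\{i\}$. Use instead the branching bound of Lemma \ref{lemmakey} with $\mathcal G=\mathcal F$, $\ell=k$, which gives $|\mathcal F|\le\binom{s}{t}(k-t+1)^{s-t}\binom{n-s}{k-s}$.
\item Pairwise $t$-intersection of $\mathcal T$ is Lemma \ref{lemmamaximal}.
\item In the non-sunflower case no ``careful count'' is needed. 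If $|\mathcal T|\ge 3$ with $\mathcal T\subseteq\binom{Z}{t+1}$, a member with $|F\cap Z|=t$ would force every member of $\mathcal T$ to contain $F\cap Z$, which is impossible. Hence $\mathcal F\subseteq\mathcal A$ and maximality gives equality, as in Lemma \ref{lemmafin-stru1-single}(ii).
\item At $k=2t+1$ no error terms compete: every other configuration already loses at leading order, since $|X|\le t+1<t+2$. What is needed is the exact identity
\[
|\mathcal A(n,2t+1,t)|-|\mathcal H(n,2t+1,t)|=\sum_{m=1}^{t}\sum_{i=1}^{m}\binom{n-t-2-i}{t-1}-t .
\]
It vanishes for $t=1$ and is positive for $t\ge 2$: there are $\binom{t+1}{2}>t$ summands, each at least $1$ for $n\ge 3t+1$.
\item This route yields only an unspecified large $n_1(k,t)$, not, e.g., $n_1(k,1)=2k$.
\end{itemize}
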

In 1967, Hilton and Milner \cite{Hilton-Milner-1967} proved Theorem \ref{HM} for $t=1$ and $n_1(k,1)=2k$, which is best possible. Frankl \cite{Frankl-1978} obtained the result for $t\geq2$. Ahlswede and Khachatrian \cite{Ahlswede-Khachatrian-1996} proved that the upper bound in Theorem \ref{HM} holds for all $n\geq(t+1)(k-t+1)$, and further completely determined largest non-trivial $t$-intersecting families in $\binom{[n]}{k}$ for all parameters.

 A result on the structure of large $t$-intersecting families often contributes to the \emph{stability} for the Erd\H{o}s--Ko--Rado theorem or the Hilton--Milner--Frankl theorem. More precisely, such a result implies that a large $t$-intersecting family which is nearly extremal in size is also close to the optimal families in structure. One classical form of stability is a \emph{removal theorem}, which asserts that a nearly optimal family can be made a subfamily of an extremal construction by deleting relatively few members. The following removal theorem was proved by Ellis, Keller and Lifshitz \cite{Ellis-2019}. 
 \begin{theorem}[\cite{Ellis-2019}]\label{thmEKL}
 	Let $n,k,t\in\mathbb{N}$ and $\eta>0$ with $\eta n\leq k\leq(\frac{1}{t+1}-\eta)n$, let $\varepsilon>0$, and let $\mathcal{F}\subseteq\binom{[n]}{k}$ be $t$-intersecting with $|\mathcal{F}|\geq(1-\varepsilon)\binom{n-t}{k-t}$. Then there exists a full $t$-star $\mathcal{S}$ such that
 	\begin{equation*}
 		|\mathcal{F}\setminus\mathcal{S}|\leq O_{t,\eta}(\varepsilon^{\log_{1-k/n}(k/n)})\binom{n-t}{k-t}.
 	\end{equation*}
 \end{theorem}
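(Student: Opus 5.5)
The plan is to split the argument into two parts: a rough stability step that places most of $\mathcal{F}$ inside a single full $t$-star, and a sharp bootstrapping step that gives the exponent $\log_{1-p}(p)$, where $p=k/n$. Since $p<\frac{1}{t+1}\le \frac12$, this exponent exceeds $1$. Throughout, measures are normalized by $\binom{n-t}{k-t}$. We may assume $\varepsilon\le\varepsilon_0(t,\eta)$, since otherwise the claimed bound exceeds $|\mathcal{F}|$ once the implicit constant is large enough.

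\emph{Step 1 (rough stability).} For $\varepsilon$ small, I would show that some full $t$-star $\mathcal{S}$, centered at a $t$-set $T$, satisfies $|\mathcal{F}\setminus\mathcal{S}|\le\delta\binom{n-t}{k-t}$ for a small constant $\delta=\delta(t,\eta)$. I would do this with a junta approximation of the kind used in the $t$-cover and Frankl-type arguments.
\begin{itemize}
\item Because $k\ge\eta n$ and $k\le(\frac1{t+1}-\eta)n$, the family $\mathcal{F}$ is approximated by a $t$-intersecting junta $\mathcal{J}$ depending on $O_{t,\eta}(1)$ coordinates.
\item Every $t$-intersecting junta other than a star has measure at most $(1-c)\binom{n-t}{k-t}$ for some $c=c(t,\eta)>0$. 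This uses that $k/n$ stays bounded away from $\frac1{t+1}$, where the Frankl families overtake the star.
\item Hence, for small $\varepsilon$, the junta $\mathcal{J}$ is essentially a star, and Step 1 follows.
\end{itemize}
I fix $T$ to be the $t$-set maximizing $|\mathcal{F}\cap\mathcal{S}_T|$, and write $\mathcal{A}=\mathcal{F}\cap\mathcal{S}_T$ and $\mathcal{B}=\mathcal{F}\setminus\mathcal{S}_T$.

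\emph{Step 2 (cross-intersection isoperimetry).} The families $\mathcal{A}$ and $\mathcal{B}$ are cross $t$-intersecting. I decompose $\mathcal{B}$ according to $R=B\cap T$, a proper subset of $T$ of size $r<t$. For each $R$, let $\mathcal{A}'$ consist of the sets $A\setminus T$ for $A\in\mathcal{A}$, viewed as $(k-t)$-subsets of $[n]\setminus T$. Let $\mathcal{B}_R$ consist of the sets $B\setminus R$ for $B\in\mathcal{B}$ with $B\cap T=R$. Then $\mathcal{A}'$ and $\mathcal{B}_R$ are cross $(t-r)$-intersecting on the ground set $[n]\setminus T$, and in particular cross-intersecting.

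The key inequality I would prove is the following. If $\mathcal{B}_R$ has measure $\beta$, then the missing part $\mathcal{S}_T\setminus\mathcal{A}$ has measure at least $\Omega_{t,\eta}(\beta^{\log_p(1-p)})$. The extremal picture behind this is as follows.
\begin{itemize}
\item $\mathcal{B}_R$ consists of all sets containing a fixed $j$-set $D$, so it has measure about $p^j$.
\item Then $\mathcal{A}$ must miss every star set avoiding $D$, which is a family of measure about $(1-p)^j$.
\item Writing $(1-p)^j=(p^j)^{\log_p(1-p)}$ gives the claimed relation.
\end{itemize}
To prove the inequality for arbitrary $\mathcal{B}_R$, I would shift both families to make them left-compressed. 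I would then compare the uniform measure with the $p$-biased measure; the hypotheses $\eta n\le k$ and $k\le(\frac1{t+1}-\eta)n$ keep the comparison losses at $O_{t,\eta}(1)$. Finally I would apply the $p$-biased cross-intersection inequality $\mu_p(\mathcal{B})^{\log_p(1-p)}\le 1-\mu_p(\mathcal{A})$ for cross-intersecting monotone families. This inequality follows from the Kruskal--Katona/Lovász-type lower bound on the upper shadow, or from a two-function hypercontractive-style induction on $n$.

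\emph{Step 3 (bootstrap).} Write $|\mathcal{B}|=\beta\binom{n-t}{k-t}$. Since $|\mathcal{A}|+|\mathcal{B}|\ge(1-\varepsilon)\binom{n-t}{k-t}$, the missing part of the star has measure at most $\varepsilon+\beta$. Combining this with Step 2 (summing over the at most $2^t$ choices of $R$) gives
$$\beta\le C_{t,\eta}(\varepsilon+\beta)^{\log_{1-p}(p)}.$$
Because the exponent exceeds $1$ by an amount bounded below in terms of $\eta$, and $\beta\le\delta$ by Step 1, the $\beta$ term on the right is absorbed: when $\beta\ge\varepsilon$ the inequality forces $\beta\le C'\beta^{\log_{1-p}p}$, which is impossible for $\delta$ small. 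Hence $\beta<\varepsilon$, and then $\beta=O_{t,\eta}(\varepsilon^{\log_{1-k/n}(k/n)})$, as required.

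The main obstacle is Step 2: proving the uniform-setting cross-intersection inequality with exactly the exponent $\log_p(1-p)$, oriented correctly. The correct direction is that a \emph{small} $\mathcal{B}$ forces a \emph{polynomially larger} hole in $\mathcal{A}$. My first attempt there would be compression followed by the $p$-biased reduction described above.
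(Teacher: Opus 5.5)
The paper gives no proof of this statement: it is quoted from Ellis--Keller--Lifshitz, whose argument has the same two-stage shape as yours, namely rough stability followed by an isoperimetric bootstrap for the cross-intersecting pair $\mathcal{F}\cap\mathcal{S}_T$, $\mathcal{F}\setminus\mathcal{S}_T$. Several parts of your plan are fine: the decomposition by $R=B\cap T$, the reduction to plain cross-intersection (the normalizations $\binom{n-t}{k-r}$ and $\binom{n-t}{k-t}$ differ by $O_{t,\eta}(1)$), the orientation of the key inequality, and the absorption in Step~3. But Steps~1 and~2 do not go through as you propose to prove them.

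\emph{Step 1.} Your claim is that every $t$-intersecting junta not contained in a star has measure at most $(1-c)\binom{n-t}{k-t}$ with $c=c(t,\eta)$. This is false once the number $J$ of coordinates is not fixed in advance. Take $\{F\colon T\subseteq F,\ F\cap D\neq\emptyset\}\cup\{F\colon |F\cap T|=t-1,\ D\subseteq F\}$ with $|D|=J-t\geq 2$. It is $t$-intersecting, not contained in a star, and has measure $\bigl(1-(1+o(1))(1-p)^{J-t}\bigr)\binom{n-t}{k-t}$. So the gap $c$ decays like $(1-p)^{J}$. The $t$-cover, kernel and spread approximations of this paper need $n$ superlinear in $k$, and say nothing when $k=\Theta(n)$. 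The analytic junta theorems that do work at $k=\Theta(n)$ (of Dinur--Friedgut type) reach error $\delta'$ only with $J(\delta')\to\infty$ coordinates. Closing your argument therefore needs $\delta'+\varepsilon<c(J(\delta'))$, which is circular. What Step~3 actually needs is a rough stability statement that does not degrade with the junta size. That statement is true: in exactly this range of $k$ it follows from Friedgut's 2008 stability theorem, which even gives $|\mathcal{F}\setminus\mathcal{S}|=O_{t,\eta}(\varepsilon)\binom{n-t}{k-t}$. You should import it rather than derive it this way.

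\emph{Step 2.} The transfer to $\mu_p$ is not a routine $O_{t,\eta}(1)$ loss. After taking up-closures, the $\mu_p$-hole of $(\mathcal{A}')^{\uparrow}$ contains every set of size below $k-t$. These have $\mu_p$-measure about $\frac12$ when $p\approx\frac{k-t}{n-t}$, so the biased inequality becomes vacuous. Raising $p$ by a factor $1+\xi$ removes this term only up to an additive $e^{-\Omega(\xi^2n)}$, and it moves the exponent by $\Theta(\xi)$, costing a factor $\varepsilon^{-\Theta(\xi)}$. This only works for $\log(1/\varepsilon)\lesssim n^{1/3}$. The theorem is claimed for all $\varepsilon>0$, and non-star families exist with $\varepsilon=e^{-\Theta(n)}$ (e.g.\ $\mathcal{H}(n,k,t)$).

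In that regime even the extremal pair $\mathcal{B}_R=\{B'\colon D\subseteq B'\}$, $|D|=d$, is off. Its normalized size is $\prod_{i<d}\frac{k-r-i}{n-t-i}$ and its forced hole is $\prod_{i<d}\frac{n-k-i}{n-t-i}$. These differ from $p^d$ and $(1-p)^d$ by factors $e^{\Theta(d^2/n)}$, which are unbounded for $d\gg\sqrt n$. The inequality survives only because these corrections go in your favour, and that must be checked in the slice.

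Do it directly with Lovász's form of Kruskal--Katona. With $U=[n]\setminus T$, the family $\mathcal{A}'$ avoids the $(k-t)$-shadow of the $(n-k-t+r)$-uniform family $\{U\setminus B'\colon B'\in\mathcal{B}_R\}$; note $n-k-t+r\geq k-t$. Hence $|\mathcal{B}_R|=\binom{n-t-y}{n-k-t+r}$ forces a hole of size at least $\binom{n-t-y}{k-t}$, and the normalized sizes are the two products above with $d=y$. Put $\lambda=\log_p(1-p)$ and consider the factorwise difference $\ln\frac{n-k-i}{n-t-i}-\lambda\ln\frac{k-r-i}{n-t-i}$. It is $\geq -O(t/n)$ at $i=0$. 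Its $i$-derivative has the sign of $\lambda(n-k-t+r)(n-k-i)-(k-t)(k-r-i)$, which is linear in $i$ and positive at $i=k-r$. So it suffices to check $i=0$, which reduces to $\lambda(1-p)^2>p^2$; this holds for $p\le\frac12-\eta$ and $n$ large. Summing over $i<y\le k$, the hole is at least $e^{-O(t)}$ times the $\lambda$-th power of the normalized size of $\mathcal{B}_R$.

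With Step~1 imported and Step~2 proved this way, your Step~3 finishes the argument.
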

 
 We refer the reader to \cite{Dinur-Friedgut,Keevash,Keevash-Long-2020,Ellis-2024} for more results on removal-type stability for intersection theorems. To introduce  subsequent results, let us define the following construction.
 \begin{construction}
 	For $1\leq c\leq k-t$, define
 	\begin{align*}
 		\mathcal{K}(n,k,t,c)=&\left\{F\in\binom{[n]}{k}:[t]\subseteq F,\;F\cap[t+1,k]\neq\emptyset\right\}\\
 		&\cup\left\{F\in{[n]\choose k}:F\cap[k]=[t],\;[k+1,k+c]\subseteq F\right\}\\
 		&\cup\left\{([k]\setminus\{i\})\cup\{j\}:i\in[t],\;j\in[k+1,k+c]\right\}.
 	\end{align*}  
 Note that $\mathcal{H}(n,k,t)=\mathcal{K}(n,k,t,1)$.
 \end{construction}

 Our first main result is a stability result for the Hilton--Milner--Frankl theorem. The order of the required lower bound on $n$ is asymptotically optimal up to a logarithmic factor.
\begin{theorem}\label{corothmremoval}
 	For any $\eta,\theta\in(0,1)$, there exists a constant $C=C(\eta,\theta)$ such that the following holds. Let $k\geq t+2$, $k\geq(1+\eta)t$ and $n\geq Ctk\log_2k$. If $\mathcal F\subseteq\binom{[n]}k$ is a non-trivial $t$-intersecting family with $|\mathcal F|>(1+\theta)|\mathcal{K}(n,k,t,2)|$, then $\mathcal{F}\lesssim\mathcal{A}(n,k,t)$ or $\mathcal{F}\lesssim\mathcal{H}(n,k,t)$.
 \end{theorem}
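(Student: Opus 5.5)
We would argue through the $t$-cover structure combined with a spread-approximation removal lemma. Let $\tau_t(\mathcal F)$ denote the minimum size of a set $T$ with $|T\cap F|\ge t$ for all $F\in\mathcal F$. Since $\mathcal F$ is non-trivial, $\tau_t(\mathcal F)\ge t+1$.

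\textbf{Step 1 (large $t$-cover number).} Suppose first that $\tau_t(\mathcal F)\ge t+2$. Take any $t$-cover $T$ of size at most $k$; a member of $\mathcal F$ serves. Then $\mathcal F=\bigcup_{S\subseteq T,|S|=t}\mathcal F(S)$, where $\mathcal F(S)=\{F\in\mathcal F:S\subseteq F\}$. Because $S$ is not a $t$-cover, some $G\in\mathcal F$ has $|G\cap S|<t$. Every $F\in\mathcal F(S)$ must meet $G\setminus S$ in enough elements, so $F$ contains $S$ together with at least one further element of a $k$-set. Iterating this branching once more (using $\tau_t\ge t+2$) gives
\[
|\mathcal F|\le \binom{k}{t}\,k^2\binom{n-t-2}{k-t-2}.
\]
On the other hand, $|\mathcal K(n,k,t,2)|\ge (k-t)\binom{n-t-1}{k-t-1}(1-o(1))$. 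The ratio of the two bounds is about $\binom{k}{t}k^2/n$, which is \emph{not} small when $\binom{k}{t}$ is exponential. This crude count is therefore insufficient, and the real argument must use spreadness.

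\textbf{Step 2 (spread approximation / removal lemma).} Following the Kupavskii–Zakharov spread approximation method, we find a family $\mathcal S$ of sets of size at most $q=O(t\log k)$ (or of size roughly $t+O(1)$ after refinement) such that $\mathcal S$ is $t$-intersecting and every set in $\mathcal F\setminus\mathcal F'$ contains some member of $\mathcal S$, where the remainder satisfies $|\mathcal F'|\le (\theta/10)|\mathcal K(n,k,t,2)|$. This is where the condition $n\ge Ctk\log_2 k$ enters: the $r$-spreadness threshold requires $n/k\gtrsim r\log k$, and $k\ge(1+\eta)t$ ensures that the binomial ratios $\binom{n-t-j}{k-t-j}/\binom{n-t}{k-t}\approx(k/n)^j$ are controlled uniformly. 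We then show that all members of $\mathcal S$ of size greater than $t+1$ contribute negligibly, so essentially $\mathcal S$ consists of $t$-sets and $(t+1)$-sets forming a $t$-intersecting family. A $t$-intersecting family of $t$- and $(t+1)$-sets is either contained in the $(t+1)$-subsets of a $(t+2)$-set (the $\mathcal A$-type case), or consists of a single $t$-set $S_0$ together with $(t+1)$-sets meeting $S_0$ in $t$ (the star-type case). Any other configuration has total weight at most $\approx(1+o(1))|\mathcal K(n,k,t,2)|$, which contradicts $|\mathcal F|>(1+\theta)|\mathcal K|$.

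\textbf{Step 3 (upgrading approximation to containment).} In the $\mathcal A$-type case, each $F\in\mathcal F$ must $t$-intersect the large number of sets of $\mathcal F$ containing each $(t+1)$-subset of $[t+2]$. A set $F$ with $|F\cap[t+2]|\le t$ misses $t$-intersection with some such set, because $n$ is large and those sets are spread. Hence $\mathcal F\lesssim\mathcal A$. In the star case with centre $S_0=[t]$, non-triviality supplies sets $G_1,\dots$ not containing $[t]$. If the sets missing $[t]$ had a union outside $[t]$ of size at least $k+2$ (which is what $\mathcal K(\cdot,2)$ corresponds to), then the subfamily containing $[t]$ must meet each of them, and the bound on $\mathcal F$ drops to at most $(1+o(1))|\mathcal K(n,k,t,2)|$; this is precisely how the threshold $\mathcal K(n,k,t,2)$ arises. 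Therefore all sets not containing $[t]$ lie inside a common $(k+1)$-set $[k+1]$, and the $t$-intersection condition then forces $\mathcal F\lesssim\mathcal H(n,k,t)$.

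\textbf{Main obstacle.} The main obstacle is Step 2 together with the comparison in Step 3: showing, uniformly in $t$ with only $n\ge Ctk\log k$, that every configuration other than the two extremal ones loses a constant factor $(1+\theta)$ against $|\mathcal K(n,k,t,2)|$. We would first try a careful count of $|\mathcal F(S_0)|$ given two "witness" sets whose union is large, using the inclusion–exclusion bound
\[
|\mathcal F(S_0)|\le\binom{n-t}{k-t}-\binom{n-k-1}{k-t}\cdot(\text{correction}).
\]
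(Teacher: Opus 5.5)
Your skeleton (approximate $\mathcal F$ by a $t$-intersecting family $\mathcal S$ of small sets, reduce to a near-star at a $t$-set $X$, then use two witnesses outside the star to squeeze $\mathcal F$ into a copy of $\mathcal K(n,k,t,2)$) is the skeleton of Case 2 of the paper's proof of Theorem \ref{thmremoval}. Your star-case argument in Step 3 is essentially the paper's endgame there. The paper then obtains the statement in one line: apply Theorem \ref{thmremoval} with $\theta/2$ to get $|\mathcal F|\le|\mathcal K|+|\mathcal F\setminus\mathcal K|\le(1+\theta)|\mathcal K(n,k,t,2)|$.

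The genuine gap is Step 2. Your claim that members of $\mathcal S$ of size larger than $t+1$ ``contribute negligibly'' is false as stated. The $(t+2)$-sets containing $[t]$ and meeting $[t+1,k+1]$ form a $t$-intersecting family. It covers every member of $\mathcal H(n,k,t)$ except $t$ of them, so such members can carry essentially the whole family. What is missing is a mechanism that does two things:
\begin{itemize}
\item[(a)] it shrinks members of $\mathcal S$ to smaller sets that are still $t$-covers, so that such configurations collapse onto a single $t$-set;
\item[(b)] it bounds how many members survive in each uniform layer.
\end{itemize}
In the paper this is the peeling procedure. Each fingerprint $\mathcal S_i$ consists of minimal $t$-covers of itself. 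This gives $|\mathcal H[B]|\le(q-t-i+1)^{q-a-|B|}$ for every $(q-a)$-uniform $\mathcal H\subseteq\mathcal S_i$, and hence $|\mathcal X_i|\le\varphi(q-t-i,t)$ (Lemma \ref{lemmafingerprintproperty-single}). Summing these bounds is exactly what makes the higher layers negligible (Lemma \ref{lemmafin-stru0}, and the estimate $\sum_i(6q/r_0)^{q-t-i}\binom{n-t}{k-t}$ in Case 2). Your suggested ``refinement to size $t+O(1)$'' also cannot come from spreadness. The approximation error $r^t(r/r_0)^{q-t}\binom{n-t}{k-t}$ carries the factor $r^t$ with $r\ge 2q$, so it is not small for $q-t=O(1)$ when $n=\Theta(tk\log_2k)$.

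A second gap concerns the regime where $k-t$ is not much larger than $q=t+\lceil Lt\log_2k\rceil$. This contains the whole range $k\le 2t+1$, where $\mathcal A(n,k,t)$ is the larger extremal family. There $q\ge k$, so the spread approximation gives no reduction at all, and your $\mathcal A$-type case has nothing to run on. Your assertion that the links of the $(t+1)$-subsets of $Z$ are spread is also not established.

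The paper handles this by splitting at $n=C_1k^2$:
\begin{itemize}
\item[(i)] If $n<C_1k^2$, then $n\ge Ctk\log_2k$ forces $k-t\gg q$. Even the $(t+1)$-layer then contributes only $O\bigl(q/(k-t)+q/r_0\bigr)\bigl(\binom{n-t}{k-t}-\binom{n-k}{k-t}\bigr)$. The only surviving configuration is that every member of $\mathcal S$ contains one $t$-set, so no $\mathcal A$-type case arises.
\item[(ii)] If $n\ge C_1k^2$ (automatic whenever $k-t$ is not much larger than $q$), the peeling is run on the maximal family $\mathcal F$ itself, starting from $\mathcal M(\mathcal F)$ (Lemma \ref{lemmamaximal}). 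The $\mathcal A$ alternative comes from the $t$-cover property rather than spreadness. A $(t+1)$-set that is not a $t$-cover of $\mathcal F$ has link at most $(k-t+1)\binom{n-t-2}{k-t-2}$ (Lemma \ref{lemmaind}). Three $(t+1)$-subsets of $Z$ that are genuine $t$-covers force $|F\cap Z|\ge t+1$ for all $F\in\mathcal F$ (Lemma \ref{lemmafin-stru1-single}).
\end{itemize}
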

Theorem \ref{corothmremoval} follows almost immediately from the following removal theorem, which is the main technical result of this paper. It shows that a family near the threshold $|\mathcal{K}(n,k,t,2)|$ can be made a subfamily of a copy of $\mathcal{K}(n,k,t,2)$ by deleting a small fraction of members.
 
  \begin{theorem}\label{thmremoval}
 	For any $\eta,\theta\in(0,1)$, there exists a constant $C=C(\eta,\theta)$ such that the following holds. Let $k\geq t+2$, $k\geq(1+\eta)t$ and $n\geq Ctk\log_2k$. If $\mathcal F\subseteq\binom{[n]}k$ is a non-trivial $t$-intersecting family with $\mathcal{F}\not\lesssim\mathcal{A}(n,k,t)$, $\mathcal{F}\not\lesssim\mathcal{H}(n,k,t)$ and $|\mathcal F|\geq(1-\theta)|\mathcal{K}(n,k,t,2)|$, then there exists $\mathcal{K}\subseteq\binom{[n]}{k}$ with $\mathcal{K}\cong\mathcal{K}(n,k,t,2)$ such that  $|\mathcal{F}\setminus\mathcal{K}|\leq2\theta|\mathcal{K}(n,k,t,2)|.$
 \end{theorem}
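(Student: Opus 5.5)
We will prove Theorem \ref{thmremoval} with the spread approximation method, refined by a $t$-cover analysis. The target sizes are $|\mathcal{K}(n,k,t,2)|\approx (k-t)\binom{n-t-1}{k-t-1}$ and $|\mathcal{A}(n,k,t)|\approx (t+2)\binom{n-t-1}{k-t-1}$, both of order $k\binom{n-t-1}{k-t-1}$. A further family of $k$-sets is negligible in this range if it has $O(k^{2}\binom{n-t-2}{k-t-2})=O(k^{3}/n)\binom{n-t-1}{k-t-1}$ members; this is where $n\ge Ctk\log_2k$ will be needed.

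\textbf{Step 1: spread approximation.} Fix $r$ and $q$ with $q\approx \log_2 k$. Iteratively extract maximal sets $S$ with $|S|\le q$ such that $\mathcal{F}(S)=\{F\setminus S:S\subseteq F\in\mathcal{F}\}$ is $r$-spread. This yields a family $\mathcal{S}$ of sets with $|S|\le q$ and a remainder $\mathcal{F}'$ with $|\mathcal{F}'|\le (C'/r)^{q}\binom{n}{k}$-type bounds. Choosing $r\approx \varepsilon n/k$ makes the remainder at most $\theta^{2}|\mathcal{K}(n,k,t,2)|$. The standard spread lemma then shows that any $S_1,S_2\in\mathcal{S}$ satisfy $|S_1\cap S_2|\ge t$, since spreadness lets us pick $F_i\supseteq S_i$ with $F_1\cap F_2=S_1\cap S_2$. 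The family $\mathcal{S}$ is therefore $t$-intersecting with small sets. Moreover, if $\mathcal{S}$ contains a set of size $t$, the trivial part dominates.

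\textbf{Step 2: reduction to sets of size $t+1$.} Each $S$ with $|S|\ge t+2$ covers at most $\binom{n-t-2}{k-t-2}$ members, and $|\mathcal{S}|$ is bounded by a function of $q$ and $k$. The sets of size at least $t+2$ therefore contribute $o(\theta)|\mathcal{K}|$, using $n\ge Ctk\log_2 k$. So, up to $\theta|\mathcal{K}|$ members, $\mathcal{F}$ is covered by sets in $\mathcal{S}$ of size $t$ or $t+1$. These form a $t$-intersecting family $\mathcal{T}$, and a $t$-intersecting family of $(t+1)$-sets is either a sunflower with a $t$-core or lies inside a $(t+2)$-set. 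We then split into cases by the structure of $\mathcal{T}$.

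If $\mathcal{T}\subseteq\binom{[t+2]}{t+1}$, then $|\mathcal{F}|\le (t+2)\binom{n-t-1}{k-t-1}+o$. Together with non-triviality and $\mathcal{F}\not\lesssim\mathcal{A}$, this should contradict $|\mathcal{F}|\ge(1-\theta)|\mathcal{K}(n,k,t,2)|$. That comparison needs $k-t$ large relative to $t+2$, which is where $k\ge(1+\eta)t$ and the constant $C(\eta,\theta)$ enter. Otherwise $\mathcal{T}$ consists of $[t]$ possibly together with sets $[t]\cup\{x\}$. Because $\mathcal{F}$ is non-trivial, it contains some $B\not\supseteq[t]$, and every member through $[t]$ must meet $B$ in at least $t$ elements. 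Since $|B\cap[t]|\le t-1$, such a member needs at least one element of $B\setminus[t]$. Taking the non-trivial members $B_1,\dots,B_m$, the $[t]$-part is bounded by the number of sets hitting the transversal structure of all $B_j\setminus[t]$.

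\textbf{Step 3: identifying $\mathcal{K}$ (the main obstacle).} We must show that the large $[t]$-part forces the $B_j$ to have the form $([k]\setminus\{i\})\cup\{j\}$ with $j$ ranging over at most two extra elements. Excluding $\mathcal{F}\lesssim\mathcal{H}$ forces at least two such extra elements, and the size lower bound forbids three. We count the members through $[t]$ meeting every $B_j\setminus[t]$ and compare against $(k-t)\binom{n-t-1}{k-t-1}$. If the sets $B_j\setminus[t]$ have no common structure, say $\bigcap_j (B_j\setminus[t])$ has size at most $k-t-2$ or the $B_j$ miss different parts of $[t]$, the count drops to roughly $(k-t-1)\binom{n-t-1}{k-t-1}$ plus lower-order terms, contradicting the size assumption up to the $\theta$ slack. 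This delicate counting is the key difficulty: the slack $\theta|\mathcal{K}|$ is comparable to $\binom{n-t-1}{k-t-1}$ when $k$ is bounded, so the losses must be tracked precisely. Once the structure is pinned down as a copy $\mathcal{K}$ of $\mathcal{K}(n,k,t,2)$, the members outside $\mathcal{K}$ are the spread remainder plus the lower-order parts, giving $|\mathcal{F}\setminus\mathcal{K}|\le2\theta|\mathcal{K}(n,k,t,2)|$.
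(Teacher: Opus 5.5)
There is a genuine gap, in the step you flag as the main obstacle: Step 3 aims at a statement that is false. Take $k\ge t+3$ and the families $\mathcal{K}(n,k,t,3)$ and $\mathcal{L}(n,k,t,k-t)$; the non-trivial members of the latter use all $n-k$ outside elements. Both are non-trivial and isomorphic to a subfamily of neither $\mathcal{A}(n,k,t)$ nor $\mathcal{H}(n,k,t)$. Both have size $(1-o(1))|\mathcal{K}(n,k,t,2)|$ throughout $n\ge Ctk\log_2k$. For $\theta$ close to $1$, even $\mathcal{L}(n,k,t,s)$ with small $s$ satisfies the hypotheses. So the size bound cannot force ``at most two extra elements''. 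You have to exhibit a copy $\mathcal{K}(X,P,I)$ and bound $|\mathcal{F}\setminus\mathcal{K}(X,P,I)|$ directly in every configuration.

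The same flaw breaks your case $\mathcal{T}\subseteq\binom{Z}{t+1}$. There $|\mathcal{A}(n,k,t)|\approx(t+2)\binom{n-t-1}{k-t-1}$ exceeds $|\mathcal{K}(n,k,t,2)|\approx(k-t)\binom{n-t-1}{k-t-1}$ whenever $k-t<t+2$, which $k\ge(1+\eta)t$ allows. So no size contradiction is available.

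The missing ingredient is the $t$-cover distinction. For maximal $\mathcal{F}$, let $\mathcal{S}^*$ be the $(t+1)$-sets that are genuine $t$-covers of $\mathcal{F}$. Any other $(t+1)$-set has degree at most $(k-t+1)\binom{n-t-2}{k-t-2}$ by Lemma \ref{lemmaind}. This gives three outcomes:
\begin{itemize}
\item Three members of $\mathcal{S}^*$ inside $\binom{Z}{t+1}$ force $\mathcal{F}\lesssim\mathcal{A}(n,k,t)$.
\item One or two members contain a common $t$-set $X$. Then $\mathcal{F}[\mathcal{S}^*]\subseteq\mathcal{K}(X,P,I)$ for any $P\supseteq(\cup\mathcal{S}^*)\setminus X$, and the rest is negligible for $n\ge C_1k^2$.
\item No members: then $|\mathcal{F}|$ itself is small.
\end{itemize}
In the sunflower case the paper likewise never identifies the non-trivial members. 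With $M=\cup\mathcal{S}^*$, the families $\{F\setminus X:F\in\mathcal{F}[X]\setminus\mathcal{F}[\mathcal{S}^*]\}$ and $\{F\setminus M:F\in\mathcal{F}[\overline{X}]\}$ are cross-intersecting. The second has $\tau_1\ge2$, since no $X\cup\{w\}$ with $w\notin M$ is a $t$-cover. Lemma \ref{lemmakey} then makes both families small.

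Steps 1--2 also do not deliver the reduction to sets of size $t$ or $t+1$, for three reasons.
\begin{itemize}
\item $q\approx\log_2k$ is too small. Members of a $t$-intersecting $\mathcal{S}$ have size at least $t$, and the remainder bound $r^{|S|}\binom{n-|S|}{k-|S|}$ carries a factor $r^t$. So $q$ must be of order $t\log k$; the paper takes $q=t+\lceil Lt\log_2k\rceil$.
\item The maximal dense sets produced by the extraction need not be small. For a near-star at $X$, roughly every $X\cup T$ with $(r_0/r)^{|T|}<r^t$ is dense, where $r_0=(n-t)/(k-t)$. So $\mathcal{F}$ may be covered almost entirely by members of $\mathcal{S}$ of size well above $t+1$. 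Nothing in the extraction bounds their number well enough to beat $\binom{n-t-2}{k-t-2}$ per set. The paper instead passes to a fingerprint of $\mathcal{S}$ (an antichain of minimal $t$-covers) and peels it, so sets shrink and $|\mathcal{X}_i|\le\varphi(q-t-i,t)$ by Lemma \ref{lemmafingerprintproperty-single}.
\item An error of order $k^2\binom{n-t-2}{k-t-2}$ is $o(\theta|\mathcal{K}(n,k,t,2)|)$ only once $n\ge C_1k^2$ with $C_1$ large in terms of $\eta,\theta$. When $n=o(k^2)$ one has $|\mathcal{K}(n,k,t,2)|=(1-o(1))\binom{n-t}{k-t}$, far below $(k-t)\binom{n-t-1}{k-t-1}$. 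Your comparisons with $(k-t-1)\binom{n-t-1}{k-t-1}$ then lose their meaning.
\end{itemize}

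This is why the paper splits into two regimes. For $n\ge C_1k^2$ it peels $\mathcal{F}$ itself starting from $\mathcal{M}(\mathcal{F})$ (Lemmas \ref{lemmafin-stru1-single} and \ref{lemmafin-stru2}). For $Ctk\log_2k\le n<C_1k^2$ it uses $q\le(k-t)/(240c)$, available because $t\log_2k\ll k$ there. This shows that after peeling the spread approximation, everything is negligible unless one $t$-set $X$ lies in every member of $\mathcal{S}$. In that case $\mathcal{F}[\overline{X}]$ lies in the spread remainder. A copy $\mathcal{K}(X,P,I)$ is then read off from two distinct non-trivial members via $P=Y_1\setminus\{z_1\}$, $I=\{z_1,z_2\}$, with no exact structure needed.
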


 Our approach is based on the \emph{peeling procedure} introduced by Kupavskii and Zakharov \cite{Kupavskii-2024}, which is a key ingredient of their \emph{spread approximation method}. The method has proved surprisingly powerful, leading to several  breakthroughs in extremal set theory \cite{Kupavskii-2023,Kupavskii-2024-perm,Frankl-Kupavskii-2025,Kupavskii-2026,Keller-Kupavskii-Lifshitz-Sheinfeld}.  A $q$-analog of the peeling procedure was recently developed by
 Ihringer and Kupavskii \cite{Ihringer-Kupavskii}. We also rely crucially on the notion of a \emph{$t$-cover} (which will be defined below) of a $t$-intersecting family, developed by Cao, the second author and Wang \cite{Lv-2021}. The $t$-cover method can also be regarded as a strengthening of the classical ``kernel method'' (see e.g. \cite{H-R}) and has proved useful in characterizing large $t$-intersecting families (see e.g., \cite{Cao-Lu-Lv-Wang-2024,Yao-Liu-Wang,Wen-Lv-2026+}). To prove our results, we incorporate the structural information
provided by $t$-covers into the peeling procedure. This yields
considerably sharper estimates, valid for smaller values of $n$,
while retaining the information needed to characterize the families
under consideration. 

A notable feature of the proof of Theorem \ref{thmremoval} is that it naturally
 splits into two regimes according to the size of $n$. When
 $n\geq C_1k^2$ for a suitable constant $C_1$, we apply the
 peeling procedure directly to $\mathcal{F}$, and use the structural information encoded by its minimal $t$-covers. In the complementary range $Ctk\log_2 k\leq n<C_1k^2$, we first construct a spread approximation $\mathcal{S}$ of $\mathcal{F}$, and then apply the
 peeling procedure to $\mathcal S$. A key ingredient in the latter argument is Theorem \ref{thmstrongsp}, which we prove using the spread
 approximation method. It yields a $t$-intersecting family
 $\mathcal S$ whose members have size $O(t\log_2k)$, such that all but a suitably small number of members of $\mathcal{F}$ contain some $S\in\mathcal{S}$. 

We next turn to exact structural results for large $t$-intersecting families. Inspired by the Hilton--Milner theorem, Han and Kohayakawa \cite{Han-Kohayakawa} determined the maximum size of a non-trivial $1$-intersecting family $\mathcal{F}\subseteq\binom{[n]}{k}$ with $\mathcal{F}\not\lesssim\mathcal{H}(n,k,1)$, that is, not  isomorphic to a subfamily of $\mathcal{H}(n,k,1)$. More precisely, for $n>2k$ and $k\geq3$, they proved that either  $|\mathcal{F}|\leq|\mathcal{K}(n,k,1,2)|$, or $k=3$ and $|\mathcal{F}|\leq|\mathcal{A}(n,k,1)|$, and they also characterized the extremal configurations. Further, Kostochka and Mubayi  \cite{Kostochka-Mubayi} described the structure of $1$-intersecting families with size at least $|\mathcal{K}(n,k,1,k-1)|$ for all sufficiently large $n$ depending on $k$. This has been established for $n>2k$ by Kupavskii \cite{Kupavskii-2018,Kupavskii-2025}, and by Huang and Peng \cite{Huang-Peng}. For more results on this question, see \cite{Ge-Xu-Zhao,Wu et al.,Huang-Kupavskii-2026}. For general $t\geq1$, Cao, the second author and Wang \cite{Lv-2021} determined, for large $n$, all maximal $t$-intersecting families of size at
 least $|\mathcal K(n,k,t,k-t)|$, as well as those whose size is
 slightly below this threshold. Let us note that, since every $t$-intersecting family is contained in some maximal one, it is  sufficient to characterize the structure of maximal families. To present their result, let us define the following construction of families.
\begin{construction}
For $2\leq u\leq k-t+1$, define
	\begin{align*}
		\mathcal{L}(n,k,t,u)=&\left\{F\in\binom{[n]}{k}:[t]\subseteq F,\;F\cap[t+1,t+u]\neq\emptyset\right\}\\
		&\cup\left\{F\in\binom{[n]}{k}:F\cap[t+u]=[t+u]\setminus\{i\}
		\text{ for some } i\in[t]\right\}.
	\end{align*} 
\end{construction}
Note that $\mathcal{A}(n,k,t)=\mathcal{L}(n,k,t,2)$ and $\mathcal{H}(n,k,t)=\mathcal{L}(n,k,t,k-t+1)$. By a routine counting argument, we have
\begin{align}
|\mathcal{K}(n,k,t,c)|&=\binom{n-t}{k-t}-\binom{n-k}{k-t}+\binom{n-k-c}{k-t-c}+ct.\\
|\mathcal{L}(n,k,t,u)|&=\binom{n-t}{k-t}-\binom{n-u-t}{k-t}+t\binom{n-u-t}{k-u-t+1}.
\end{align}
 \begin{theorem}[\cite{Lv-2021}]\label{thmCLW}
 	Let $k\geq t+2$ and $n\geq t+\max\left\{{t+2\choose 2},\frac{k-t+2}{2}\right\}\cdot(k-t+1)^2$. If  $\mathcal{F}\subseteq{[n]\choose k}$ is a maximal non-trivial $t$-intersecting family with $
 	|\mathcal{F}|\geq (k-t){n-t-1\choose k-t-1}-{k-t\choose 2}{n-t-2\choose k-t-2}$, then one of the following holds.		
 	\begin{itemize}
 		\item[\rm(i)] $\mathcal{F}\cong\mathcal{L}(n,k,t,s)$ for some $s\in\{2,k-t,k-t+1\}$.
 		\item[\rm(ii)] $\mathcal{F}\cong\mathcal{K}(n,k,t,c)$ for some $c\in\{2,3,\ldots,k-t\}$.
 	\end{itemize}		
 \end{theorem}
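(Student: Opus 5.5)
The statement to prove is Theorem \ref{thmCLW}, which is quoted from \cite{Lv-2021}; the plan below follows the $t$-cover method. Let $\mathcal{F}$ be maximal non-trivial $t$-intersecting with $|\mathcal{F}|$ at least the stated bound. Call a set $T$ a \emph{$t$-cover} of $\mathcal{F}$ if $|T\cap F|\geq t$ for every $F\in\mathcal{F}$, and let $\tau_t(\mathcal{F})$ be the minimum size of a $t$-cover. Since $\mathcal{F}$ is non-trivial, $\tau_t(\mathcal{F})\geq t+1$. Since every member is a $t$-cover, $\tau_t(\mathcal{F})\leq k$.

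\textbf{Step 1: the covering number is $t+1$.} Fix a $t$-cover $T$ with $|T|=\tau_t(\mathcal{F})$. Every $F\in\mathcal{F}$ contains some $t$-subset $S\subseteq T$. If $\tau_t(\mathcal{F})\geq t+2$, then $\mathcal{F}_S:=\{F\in\mathcal{F}:S\subseteq F\}$ cannot be covered by $S$ alone. So there is a member $F'$ with $|F'\cap S|<t$, and each $F\in\mathcal{F}_S$ must pick up an extra element of $F'$. Iterating this gives $|\mathcal{F}_S|\leq k\binom{n-t-1}{k-t-1}$, and more precisely $|\mathcal{F}_S|\leq k^2\binom{n-t-2}{k-t-2}$.

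Summing over the $\binom{\tau}{t}\leq\binom{k}{t}$ choices of $S$ is too crude. Instead, following Frankl's kernel argument, bound $|\mathcal{F}|\leq\binom{\tau}{t}(k-t+1)^{\tau-t}\binom{n-\tau}{k-\tau}$. For $\tau\geq t+2$ this is at most $\binom{t+2}{2}(k-t+1)^2\binom{n-t-2}{k-t-2}$, using that the sequence decreases in $\tau$ when $n$ is large. This bound is below the threshold $(k-t)\binom{n-t-1}{k-t-1}-\binom{k-t}{2}\binom{n-t-2}{k-t-2}$, since $\binom{n-t-1}{k-t-1}/\binom{n-t-2}{k-t-2}=\frac{n-t-1}{k-t-1}$ and $n-t\geq\binom{t+2}{2}(k-t+1)^2$. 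Hence $\tau_t(\mathcal{F})=t+1$.

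\textbf{Step 2: the structure of the minimum $t$-covers.} Let $\mathcal{T}$ be the family of all $(t+1)$-sets that are $t$-covers of $\mathcal{F}$. The family $\mathcal{T}$ is itself $t$-intersecting: by maximality, if $T_1,T_2\in\mathcal{T}$ had $|T_1\cap T_2|<t$, then each member of $\mathcal{F}$ would contain $t$ elements of each, forcing at least $t+1$ elements of $T_1\cup T_2$. Counting those members gives too few sets.

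A $t$-intersecting family of $(t+1)$-sets is of one of two kinds:
\begin{itemize}
\item it is contained in $\binom{M}{t+1}$ with $|M|=t+2$;
\item it is a sunflower $\{S\cup\{x\}:x\in X\}$ with kernel a common $t$-set $S$.
\end{itemize}
Because $\mathcal{F}$ is maximal, $\mathcal{F}$ equals the set of all $k$-sets meeting every $T\in\mathcal{T}$ in at least $t$ elements and meeting every member of $\mathcal{F}$ appropriately. Also $|\mathcal{F}|\approx|\mathcal{T}\text{-star part}|$ up to $O(\binom{n-t-2}{k-t-2})$.

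\textbf{Step 3: case analysis.} In the first case, if $|\mathcal{T}|\geq 3$ then $\mathcal{F}\cong\mathcal{A}(n,k,t)=\mathcal{L}(n,k,t,2)$. If $\mathcal{T}$ is smaller, counting gives $|\mathcal{F}|\lesssim 2\binom{n-t-1}{k-t-1}$. That is too small unless $k-t\leq 2$, and that exceptional case is handled by direct comparison.

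In the sunflower case, set $u=|X|$. Each $F\in\mathcal{F}$ either contains $S$ and meets $X$, or misses some element of $S$ and then contains $X$ together with the rest of $S$. This forces $u\leq k-t+1$, and $\mathcal{F}\subseteq\mathcal{L}(n,k,t,u)$.

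The size bound $|\mathcal{L}(n,k,t,u)|\approx u\binom{n-t-1}{k-t-1}-\binom{u}{2}\binom{n-t-2}{k-t-2}$ requires $u\geq k-t$. The sets missing an element of $S$ must be pairwise $t$-intersecting with the rest, and this gives three possibilities:
\begin{itemize}
\item $\mathcal{F}=\mathcal{L}(n,k,t,u)$ with $u\in\{k-t,k-t+1\}$;
\item $u=k-t$, and the members missing part of $S$ all contain the same extra $c$-set, which produces $\mathcal{K}(n,k,t,c)$ with $c\geq2$ (the case $c=1$ is $\mathcal{H}$);
\item a family with $\mathcal{T}$ of size $1$, whose size falls below the threshold.
\end{itemize}

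\textbf{Main obstacle.} The delicate part is this last classification, identifying exactly $\mathcal{K}(n,k,t,c)$. Its members lack a full sunflower of covers, so the sets not containing $S$ have to be pinned down by a second-level cover argument. The second-order terms must be tracked carefully enough to exclude every alternative configuration whose size sits just below the threshold.
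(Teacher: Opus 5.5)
\textbf{Genuine gap in Step~3 (sunflower case).} Your overall route is sound: force $\tau_t(\mathcal{F})=t+1$, then study the family $\mathcal{T}$ of $(t+1)$-element $t$-covers. This family is either inside $\binom{M}{t+1}$ with $|M|=t+2$, or a sunflower with $t$-set kernel $S$ and petal set $X$, $|X|=u$. The failure is your dichotomy ``each $F\in\mathcal{F}$ either contains $S$ and meets $X$, or misses an element of $S$ and contains $X$''. Only the second alternative is forced: if $|F\cap S|=t-1$, then $F$ must contain every $x\in X$. A member containing $S$, however, may avoid $X$ altogether.

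Your own target family $\mathcal{K}(n,k,t,c)$ with $c\geq2$ shows this. Its $(t+1)$-element $t$-covers are exactly $[t]\cup\{p\}$, $p\in[t+1,k]$, so $S=[t]$ and $X=[t+1,k]$. Yet every $k$-set $F$ with $F\cap[k]=[t]$ and $[k+1,k+c]\subseteq F$ is a member. Consequences:
\begin{itemize}
\item $\mathcal{F}\subseteq\mathcal{L}(n,k,t,u)$ fails precisely in the case you need to detect, so ``the size bound requires $u\geq k-t$'' has no basis.
\item Even granting the containment, your estimate of $|\mathcal{L}(n,k,t,u)|$ drops the term $t\binom{n-u-t}{k-u-t+1}$. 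For $u=2$ this is comparable to $t\binom{n-t-1}{k-t-1}$ and can exceed the threshold.
\item Your description of how $\mathcal{K}(n,k,t,c)$ arises is inverted. The members that contain $S$ and avoid $X$ share a common $c$-set $Q$. The members missing an element of $S$ are the sets $(S\setminus\{s\})\cup X\cup\{y\}$ with $y\in Q$.
\end{itemize}

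\textbf{What is missing.} You need a bound on $\mathcal{B}:=\{F\in\mathcal{F}:S\subseteq F,\ F\cap X=\emptyset\}$.
\begin{itemize}
\item By non-triviality, fix $W_0\in\mathcal{F}$ with $|W_0\cap S|=t-1$; then $X\subseteq W_0$.
\item Every $G\in\mathcal{B}$ meets the $(k-t+1-u)$-set $W_0\setminus(S\cup X)$.
\item For each such $w$, the set $S\cup\{w\}\notin\mathcal{T}$ is not a $t$-cover, so Lemma~\ref{lemmaind} gives $|\mathcal{B}|\leq(k-t+1-u)(k-t+1)\binom{n-t-2}{k-t-2}$.
\item By maximality, $|\mathcal{F}[\mathcal{T}]|=\binom{n-t}{k-t}-\binom{n-t-u}{k-t}$.
\item Members missing an element of $S$ need a separate bound. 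For $u\geq3$ trivial counting suffices. For $u\leq2$, use that $(t+1)$-sets containing only $t-1$ elements of $S$ are not in $\mathcal{T}$, hence are not $t$-covers.
\end{itemize}
Together these push $|\mathcal{F}|$ below the threshold for $u\leq k-t-1$, under the stated bound on $n$.

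The remaining cases:
\begin{itemize}
\item If $u=k-t+1$, then $W_0\setminus(S\cup X)=\emptyset$, so $\mathcal{B}=\emptyset$ and $\mathcal{F}\cong\mathcal{H}(n,k,t)$.
\item If $u=k-t$, let $Q$ be the set of elements $y$ occurring in members of the form $(S\setminus\{s\})\cup X\cup\{y\}$. Every $G\in\mathcal{B}$ contains $Q$. Also $|Q|\geq2$, since no $S\cup\{y\}$ with $y\notin X$ is a $t$-cover. Maximality then gives $\mathcal{L}(n,k,t,k-t)$ when $\mathcal{B}=\emptyset$, and $\mathcal{K}(n,k,t,|Q|)$ with $2\leq|Q|\leq k-t$ otherwise.
\end{itemize}
The paper only quotes this theorem from \cite{Lv-2021}. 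However, this is exactly the argument it runs for Theorem~\ref{thmlarge} in the proof of Lemma~\ref{lemmafin-stru2}(ii), via the family $\mathcal{F}''$ and the set $Q$; there the kernel is called $X$ and the petal set $P$.

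\textbf{Two smaller points.}
\begin{itemize}
\item In Step~1, the branch $n-t\geq\binom{t+2}{2}(k-t+1)^2$ alone does not beat the $\binom{k-t}{2}\binom{n-t-2}{k-t-2}$ term of the threshold when $k-t$ is large relative to $t$ (e.g.\ $t=1$, $k-t\geq10$). You need the full maximum in the hypothesis.
\item In Step~2, the $t$-intersecting property of $\mathcal{T}$ follows at once from maximality, as in Lemma~\ref{lemmamaximal}; no counting is needed.
\end{itemize}
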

  Roughly speaking, results in extremal set theory are usually much easier to establish provided that the ground set $[n]$ is sufficiently large. More recently, Frankl \cite{Frankl-2025} considered the same structural problem under the condition $n\geq t+\max\{4t(k-t+1)^2,2(t+1)^2\}$. Our next main result gives the corresponding characterization under an improved bound for a broad range of $k$ and $t$. 
 \begin{theorem}\label{thmlarge}
 	Let $k\geq t+4\geq6$ and $n\geq t+6\cdot\max\{(t+2)^2, k(k-t)\}$. If $\mathcal{F}\subseteq{[n]\choose k}$ is a maximal non-trivial $t$-intersecting family with $|\mathcal{F}|\geq\binom{n-t}{k-t}-\binom{n-k}{k-t}$, then either 
 	\begin{itemize}
 		\item[\rm(i)] $\mathcal{F}\cong\mathcal{L}(n,k,t,s)$ for some $s\in\{2,k-t,k-t+1\}$, or
 		\item[\rm(ii)] $\mathcal{F}\cong\mathcal{K}(n,k,t,c)$ for some $c\in\{2,3,\ldots,k-t\}$.
 	\end{itemize}
 \end{theorem}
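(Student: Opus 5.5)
We plan to use the $t$-cover method of Cao--Lv--Wang. For a $t$-intersecting family $\mathcal{F}$, call a set $T$ a \emph{$t$-cover} of $\mathcal{F}$ if $|T\cap F|\geq t$ for all $F\in\mathcal{F}$, and let $\tau_t(\mathcal{F})$ be the minimum size of a $t$-cover. Since $\mathcal{F}$ is non-trivial, $\tau_t(\mathcal{F})\geq t+1$, and every member of $\mathcal{F}$ is a $t$-cover, so $\tau_t(\mathcal{F})\leq k$.

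\textbf{Step 1: bound $\tau_t(\mathcal{F})$.} Fix a $t$-subset $A$ and a $t$-cover $T$. The standard branching argument gives
$$|\{F\in\mathcal{F}:A\subseteq F\}|\leq \binom{k}{1}^{s}\binom{n-t-s}{k-t-s}$$
whenever $A$ is not contained in all members, where $s=\tau_t(\mathcal{F})-t$. Covering $\mathcal{F}$ by its $t$-subsets of a minimal cover then gives
$$|\mathcal{F}|\leq\binom{\tau}{t}k^{\tau-t}\binom{n-\tau}{k-\tau}.$$
The hypothesis $|\mathcal{F}|\geq\binom{n-t}{k-t}-\binom{n-k}{k-t}\approx (k-t)\binom{n-t-1}{k-t-1}$, together with $n\geq t+6\max\{(t+2)^2,k(k-t)\}$, should force $\tau_t(\mathcal{F})=t+1$. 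The case $\tau=t+2$ requires the finer count $\binom{t+2}{2}\,k\binom{n-t-2}{k-t-2}$, which is beaten by $(k-t)\binom{n-t-1}{k-t-1}$ once $n-t>6(t+2)^2$ and $n-t> 6k(k-t)$. This is where the constant $6$ is used.

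\textbf{Step 2: structure of the $(t+1)$-covers.} Let $\mathcal{T}$ be the family of $(t+1)$-sets that are $t$-covers of $\mathcal{F}$; it is itself $t$-intersecting. We show that some $t$-set $A$ lies in many members of $\mathcal{F}$: the members not containing any fixed $t$-set of a cover number at most roughly $k\binom{t+1}{t}$ times a lower-order binomial. Separate cases by the structure of $\mathcal{T}$.
\begin{itemize}
\item[(a)] $\mathcal{T}$ is a $(t+1)$-set family inside a $(t+2)$-set, forming a non-trivial configuration. This leads to $\mathcal{F}\cong\mathcal{L}(n,k,t,2)=\mathcal{A}(n,k,t)$.
\item[(b)] $\mathcal{T}$ is trivial, i.e.\ all members contain a common $t$-set $A=[t]$. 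Write $\mathcal{T}=\{[t]\cup\{x\}:x\in X\}$ and set $\mathcal{F}_1=\{F\in\mathcal{F}:[t]\not\subseteq F\}$, which is non-empty. Every $F\in\mathcal{F}$ containing $[t]$ must $t$-intersect every $G\in\mathcal{F}_1$.
\end{itemize}

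\textbf{Step 3: case (b) via maximality.} By maximality, $\mathcal{F}$ is exactly the set of $k$-sets containing $[t]$ that $t$-intersect all of $\mathcal{F}_1$, together with $\mathcal{F}_1$. Counting shows
$$|\mathcal{F}|\leq\binom{n-t}{k-t}-|\{F\supseteq[t]:F \text{ misses } \mathcal{F}_1\}|+|\mathcal{F}_1|,$$
so the threshold forces the missed family to be at most about $\binom{n-k}{k-t}$. Each $G\in\mathcal{F}_1$ has $|G\cap[t]|=t-1$, and $F$ must meet $G\setminus[t]$, which has size $k-t+1$. Let $Y=\bigcap_{G\in\mathcal{F}_1}(G\setminus[t])$. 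The lower bound forces $|Y|$ to be large, roughly $k-t-O(1)$: otherwise the union constraint removes more than $\binom{n-k}{k-t}$ sets, by comparing $\binom{n-t}{k-t}-\binom{n-t-|Y|}{k-t}$ with $\binom{n-t}{k-t}-\binom{n-k}{k-t}$. We then enumerate:
\begin{itemize}
\item $|Y|=k-t+1$, with all $G$ of the form $Y\cup[t]\setminus\{i\}$: this gives $\mathcal{L}(n,k,t,k-t+1)=\mathcal{H}(n,k,t)$.
\item $|Y|=k-t$ with varying $i$: this gives $\mathcal{L}(n,k,t,k-t)$.
\item $|Y|=k-t$ with the extra element $j$ ranging over a $c$-set: this gives $\mathcal{K}(n,k,t,c)$.
\end{itemize}
Maximality pins each of these down as equality.

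\textbf{Main obstacle.} The hardest part is excluding intermediate configurations in Step 3 when $|Y|\leq k-t-1$ (or $|Y|=k-t$ but $\mathcal{F}_1$ has mixed missing elements $i$). These have size just below the threshold, and the difference from $\binom{n-t}{k-t}-\binom{n-k}{k-t}$ is a lower-order term of order $\binom{n-k-2}{k-t-2}$ versus $t\binom{n-k}{k-t-1}$-type gains. Here we would use inclusion--exclusion on $\binom{n-t-|Y|}{k-t}$ and need $n\gtrsim 6k(k-t)$ and $k\geq t+4$ to make the second-order terms dominate.
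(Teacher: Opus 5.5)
There is a genuine gap, and it is exactly where the range of $n$ gets decided: Step~1 cannot exclude $\tau_t(\mathcal{F})\geq t+2$.

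A bound of the shape $\binom{\tau}{t}\cdot(\text{branching loss})^{\tau-t}\binom{n-\tau}{k-\tau}$ \emph{multiplies} the $\binom{t+2}{2}$ choices of a $t$-subset of a minimum cover by the branching loss. With $k$-element $t$-covers, the honest loss at $\tau=t+2$ is $(k-t+1)^2$. This is Lemma~\ref{lemmakey}, which gives $|\mathcal{F}|\leq\binom{t+2}{2}(k-t+1)^2\binom{n-t-2}{k-t-2}$; your factor $k$ is not justified.

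The threshold is below $(k-t)\binom{n-t-1}{k-t-1}=\frac{(k-t)(n-t-1)}{k-t-1}\binom{n-t-2}{k-t-2}$. So your route needs roughly $n-t\gtrsim\binom{t+2}{2}(k-t+1)^2$, which is the Cao--Lv--Wang range of Theorem~\ref{thmCLW} that the statement improves on.
\begin{itemize}
\item It already fails at $t=3$, $k=7$, $n=171$, where $\binom{5}{2}\cdot5^2=250>4\cdot167/3$.
\item Even granting your $\binom{t+2}{2}k$, you would need $n-t$ of order $t^2k$. This fails for $k=t+4$ and large $t$, where the hypothesis only gives $n-t\geq6(t+2)^2$.
\end{itemize}

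The missing idea is the peeling procedure run on the fingerprint $\mathcal{M}(\mathcal{F})$ (Lemma~\ref{lemmamaximal}). It makes the two losses \emph{additive} rather than multiplicative:
\begin{itemize}
\item All members covered by the non-terminal fingerprints $\mathcal{X}_i$ total at most $\delta\binom{n-t-1}{k-t-1}$, with $r\delta=O(t^2+t(k-t))$. This holds because $|\mathcal{X}_i|\leq\varphi(q-t-i,t)$ (Lemmas~\ref{lemmafingerprintproperty-single}(iv) and~\ref{lemmafin-stru0}(i)).
\item Only $\mathcal{F}[X]$, for the single terminal $t$-set $X$, is branched. This costs $(k-t+1)^2\binom{n-t-2}{k-t-2}$, with no $\binom{t+2}{2}$ factor.
\end{itemize}
Together with the cases where the procedure terminates earlier (bounds $\delta$, $M_1$, $M_2$, or $\mathcal{F}\cong\mathcal{A}(n,k,t)$), everything reduces to inequalities such as $\tfrac12(t+2)^2+O(t)+3.6(t+2)(k-t)+(k-t+1)^2<0.9(n-t)$. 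That is exactly what $n-t\geq6\max\{(t+2)^2,k(k-t)\}$ guarantees (Lemma~\ref{lemmathmlarge}).

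Step~3 has a second gap. Your inequality $|\mathcal{F}|\leq\binom{n-t}{k-t}-|\{\text{missed}\}|+|\mathcal{F}_1|$ forces the missed family to be small only if $|\mathcal{F}_1|$ is of lower order, about $O(t)\binom{n-t-2}{k-t-2}$. You give no bound for it, and elementary bounds are far too weak. Each $G\in\mathcal{F}_1$ contains $([t]\setminus\{i\})\cup Y$ for some $i$, and meets $F_0\setminus[t]$ for a fixed $F_0\supseteq[t]$ avoiding $Y$. This gives only $t(k-t)\binom{n-t-|Y|-1}{k-t-|Y|}$, which is roughly $t$ times the threshold when $|Y|=1$.

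In the paper this bound comes from Lemma~\ref{lemmafin-stru0}(ii). The minimum covers $X\cup\{p\}$ survive unchanged in every fingerprint before $\{X\}$ appears, so $|\mathcal{X}_i|\leq(t+1)(q-t-i+1)^{q-t-i}$. Since $\mathcal{F}[\overline{X}]\subseteq\bigcup_{i\leq j-1}\mathcal{F}[\mathcal{X}_i]$, and all these sets have size at least $t+2$, one gets $|\mathcal{F}'|<(20+9t)\binom{n-t-2}{k-t-2}$ in Lemma~\ref{lemmafin-stru2}.

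Your final enumeration also needs a correction. The set $Y$ is exactly the set of $p$ with $[t]\cup\{p\}\in\mathcal{T}$. In both $\mathcal{L}(n,k,t,k-t)$ and $\mathcal{K}(n,k,t,c)$, the missing element $i\in[t]$ ranges freely, so "mixed missing elements" are not something to exclude. The real dichotomy for $|Y|=k-t$ is whether $\mathcal{F}[[t]]=\mathcal{F}[\mathcal{T}]$:
\begin{itemize}
\item If so, the extra element is arbitrary and $\mathcal{F}\cong\mathcal{L}(n,k,t,k-t)$.
\item If not, the extra elements form a set $Q$ with $2\leq|Q|\leq k-t$, and $\mathcal{F}\cong\mathcal{K}(n,k,t,|Q|)$.
\end{itemize}
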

 Let us note that our method works for $k-t\in\{2,3\}$ as well. However, handling these two cases requires
 separate and rather tedious estimates. We therefore
 restrict the statement of Theorem \ref{thmlarge} to $k\geq t+4$. For $k-t\in\{2,3\}$, Theorem
 \ref{thmCLW} already provides a reasonably strong bound. We do not attempt to optimize the factor $6$. Together with a simple counting lemma (Lemma \ref{lemmacountingfamily}),
 Theorem \ref{thmlarge} yields a $t$-intersecting analogue of the Han--Kohayakawa theorem. 
 \begin{corollary}\label{coroH-K}
Let $k\geq t+4\geq6$ and $n\geq t+6\cdot\max\{(t+2)^2, k(k-t)\}$. If $\mathcal{F}\subseteq{[n]\choose k}$ is non-trivial $t$-intersecting with
$\mathcal F\not\lesssim\mathcal A(n,k,t)$ and
$\mathcal F\not\lesssim\mathcal H(n,k,t)$, then 
$$|\mathcal{F}|\leq\binom{n-t}{k-t}-\binom{n-k}{k-t}+\binom{n-k-2}{k-t-2}+2t,$$
with equality if and only if $\mathcal{F}\cong\mathcal{K}(n,k,t,2)$.
 \end{corollary}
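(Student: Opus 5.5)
We deduce Corollary \ref{coroH-K} from Theorem \ref{thmlarge} together with the counting lemma (Lemma \ref{lemmacountingfamily}). That lemma is assumed to compare the sizes of the families $\mathcal{L}(n,k,t,s)$ and $\mathcal{K}(n,k,t,c)$ in the stated range of $n$.

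First, we may extend $\mathcal{F}$ to a maximal $t$-intersecting family $\mathcal{F}'\supseteq\mathcal{F}$. Then $\mathcal{F}'$ is non-trivial, since a subfamily of a trivial family is trivial. Moreover, $\mathcal{F}'\not\lesssim\mathcal{A}(n,k,t)$ and $\mathcal{F}'\not\lesssim\mathcal{H}(n,k,t)$, since $\mathcal{F}\subseteq\mathcal{F}'$ and $\mathcal{F}$ already fails both containments. It therefore suffices to show
\[
|\mathcal{F}'|\leq|\mathcal{K}(n,k,t,2)|=\binom{n-t}{k-t}-\binom{n-k}{k-t}+\binom{n-k-2}{k-t-2}+2t,
\]
with equality only if $\mathcal{F}'\cong\mathcal{K}(n,k,t,2)$. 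If $|\mathcal{F}'|<\binom{n-t}{k-t}-\binom{n-k}{k-t}$, the bound holds strictly, because $\binom{n-k-2}{k-t-2}+2t>0$. Otherwise Theorem \ref{thmlarge} applies, so $\mathcal{F}'$ is isomorphic to one of $\mathcal{L}(n,k,t,2)=\mathcal{A}(n,k,t)$, $\mathcal{L}(n,k,t,k-t)$, $\mathcal{L}(n,k,t,k-t+1)=\mathcal{H}(n,k,t)$, or $\mathcal{K}(n,k,t,c)$ with $2\le c\le k-t$. The first and third options are excluded by hypothesis.

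Next we compare sizes using the formulas for $|\mathcal{K}|$ and $|\mathcal{L}|$. For $c\geq3$,
\[
|\mathcal{K}(n,k,t,2)|-|\mathcal{K}(n,k,t,c)|=\binom{n-k-2}{k-t-2}-\binom{n-k-c}{k-t-c}-(c-2)t.
\]
This difference is at least $\binom{n-k-3}{k-t-3}-(k-t)t$, up to lower-order terms; more precisely it is at least $\binom{n-k-3}{k-t-3}\cdot\frac{n-k-2}{k-t-2}\cdot(\ldots)$, using $\binom{a}{b}-\binom{a-1}{b-1}=\binom{a-1}{b}$. With $k-t\ge4$ we get $\binom{n-k-3}{k-t-2}\geq\binom{n-k-3}{2}$, which exceeds $(k-t)t$ because $n\geq 6(t+2)^2$ and $n\geq 6k(k-t)$. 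Hence the difference is strictly positive.

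For $\mathcal{L}(n,k,t,k-t)$ we set $u=k-t$ and obtain
\[
|\mathcal{L}(n,k,t,k-t)|=\binom{n-t}{k-t}-\binom{n-k}{k-t}+t\binom{n-k}{1}=\binom{n-t}{k-t}-\binom{n-k}{k-t}+t(n-k).
\]
So we must show $\binom{n-k-2}{k-t-2}+2t>t(n-k)$. Since $k-t-2\geq2$, the left side is at least $\binom{n-k-2}{2}$, which is roughly $(n-k)^2/2$. This exceeds $t(n-k)$ once $n-k>2t+5$ or so, and that is guaranteed by $n\geq t+6(t+2)^2$.

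Therefore in every surviving case $|\mathcal{F}|\le|\mathcal{F}'|\le|\mathcal{K}(n,k,t,2)|$, with equality only if $\mathcal{F}'\cong\mathcal{K}(n,k,t,2)$ and $\mathcal{F}=\mathcal{F}'$. Conversely, $\mathcal{K}(n,k,t,2)$ itself is non-trivial and is not contained in any copy of $\mathcal{A}$ or $\mathcal{H}$. This needs a short check: a copy of $\mathcal{A}$ would require a $(t+2)$-set meeting every member in $t+1$ elements, and a copy of $\mathcal{H}$ would force the non-star part to consist of at most $t$ sets, whereas $\mathcal{K}(n,k,t,2)$ has $2t$ sets avoiding $[t]$ together with many others. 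The containment check and the binomial estimate for $c=3$ are the only delicate points; the rest is routine.
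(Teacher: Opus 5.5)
Your proof is correct and follows the paper's route: extend to a maximal family, apply Theorem~\ref{thmlarge}, exclude $\mathcal{A}(n,k,t)$ and $\mathcal{H}(n,k,t)$, and compare the remaining sizes, which is exactly the content of Lemma~\ref{lemmacountingfamily} (you could simply cite it). The one muddled line in your $c\ge 3$ estimate should read: the difference is at least $\binom{n-k-2}{k-t-2}-\binom{n-k-3}{k-t-3}-(k-t)t=\binom{n-k-3}{k-t-2}-(k-t)t>0$.
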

 
We now turn to $t$-diversity and another application of Theorem
\ref{thmlarge}. For a family $\mathcal{F}$ of subsets of $[n]$, its \emph{$s$-degree}, denoted $\Delta_s(\mathcal{F})$, is defined to be the maximum number of its members that contain a fixed $s$-subset. Formally,
 \begin{equation}\label{equdefDelta}
 \Delta_s(\mathcal{F}):=\max_{S\in\binom{[n]}{s}}|\left\{F\in\mathcal{F}:S\subseteq F\right\}|.
 \end{equation}
We also define its \emph{$t$-diversity} by
 \begin{equation*}
 	\gamma_t(\mathcal{F}):=|\mathcal{F}|-\Delta_t(\mathcal{F}).
 \end{equation*}
Let us note that the notion of $t$-diversity is a generalization of the important parameter \emph{diversity} (see e.g., \cite{Lemons-Palmer,Frankl-2020,Frankl-Kupavskii-2021}). When $t=1$, the two notions coincide. We write $\Delta$ and $\gamma$ simply for $\Delta_t$ and $\gamma_t$, respectively. One can regard $\gamma_t(\mathcal{F})$ as the distance of $\mathcal{F}$ to its closest full $t$-star. In this sense, a $t$-intersecting family is non-trivial precisely if it has positive $t$-diversity. For the two constructions defined above, it is routine to check that
$$\gamma_t(\mathcal{K}(n,k,t,c))=ct\;\;\;\mbox{and}\;\;\gamma_t(\mathcal{L}(n,k,t,u))=t\binom{n-u-t}{k-u-t+1}$$
for $c\in[k-t]$ and $u\in[2,k-t+1]$. 

In 1987, Frankl \cite{Frankl-1987} proved a far-reaching strengthening of the Hilton--Milner theorem, characterizing the largest $1$-intersecting families with bounded maximum degree. Let us present its diversity version established by Kupavskii and Zakharov \cite{Kupavskii-Zakharov-2018}.
\begin{theorem}[\cite{Kupavskii-Zakharov-2018}]\label{thmK-Z}
Let $n>2k$ and $\mathcal{F}\subseteq\binom{[n]}{k}$ be a $1$-intersecting family. If $\gamma(\mathcal{F})\geq\binom{n-u-1}{n-k-1}$ for some \emph{real} $3\leq u\leq k$, then 
$$|\mathcal{F}|\leq\binom{n-1}{k-1}-\binom{n-u-1}{k-1}+\binom{n-u-1}{n-k-1}.$$
\end{theorem}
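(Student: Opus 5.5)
The plan is to split $\mathcal{F}$ by a vertex of maximum degree and reduce the statement to a cross-intersecting problem, which Kruskal--Katona in Lovász's form then settles. Without loss of generality element $1$ has maximum degree, so $\Delta(\mathcal{F})=|\mathcal{F}(1)|$. Define
$$\mathcal{A}=\{F\setminus\{1\}:1\in F\in\mathcal{F}\}\subseteq\binom{[2,n]}{k-1},\qquad \mathcal{B}=\{F\in\mathcal{F}:1\notin F\}\subseteq\binom{[2,n]}{k}.$$
Then $|\mathcal{F}|=|\mathcal{A}|+|\mathcal{B}|$ and $\gamma(\mathcal{F})=|\mathcal{B}|$. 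Since $\mathcal{F}$ is intersecting, $\mathcal{A}$ and $\mathcal{B}$ are cross-intersecting, and $|\mathcal{B}|\le|\mathcal{A}|$ because $1$ has maximum degree. The whole task is to bound $|\mathcal{A}|+|\mathcal{B}|$ from above, given that $|\mathcal{B}|\ge\binom{n-u-1}{n-k-1}$.

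Next I pass to complements inside $[2,n]$, a set of $n-1$ points. Put $\mathcal{B}^c=\{[2,n]\setminus B:B\in\mathcal{B}\}$, a family of $(n-k-1)$-sets. A set $A\in\binom{[2,n]}{k-1}$ meets every $B\in\mathcal{B}$ exactly when $A$ lies in no member of $\mathcal{B}^c$, that is, when $A$ avoids the $(k-1)$-shadow $\partial_{k-1}\mathcal{B}^c$. Note that $n>2k$ gives $n-k-1\ge k$, so this shadow goes to a genuinely lower level. Hence
$$|\mathcal{A}|\le\binom{n-1}{k-1}-|\partial_{k-1}\mathcal{B}^c|.$$
Write $|\mathcal{B}|=\binom{x}{n-k-1}$ with real $x$; the hypothesis gives $x\ge n-u-1$. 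Lovász's version of Kruskal--Katona then gives $|\partial_{k-1}\mathcal{B}^c|\ge\binom{x}{k-1}$, and therefore
$$|\mathcal{F}|\le\binom{n-1}{k-1}-\Bigl(\binom{x}{k-1}-\binom{x}{n-k-1}\Bigr)=:\binom{n-1}{k-1}-g(x).$$
It remains to show that $g$ is nondecreasing on the admissible interval $[n-u-1,x_{\max}]$. Then $g(x)\ge g(n-u-1)$, and this is precisely the claimed bound.

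For the upper end of the interval I use $|\mathcal{B}|\le|\mathcal{A}|$ together with the inequality just proved, $|\mathcal{A}|\le\binom{n-1}{k-1}-\binom{x}{k-1}$. This should force $x\le n-4$ (and in fact $x<n-3$ whenever $\mathcal{B}\neq\emptyset$). That is consistent with the hypothesis $u\ge3$, which makes the lower end $n-u-1\le n-4$. At integer points the monotonicity is a clean comparison:
$$g(x+1)-g(x)=\binom{x}{k-2}-\binom{x}{n-k-2}.$$
Since $(k-2)+(n-k-2)=n-4\ge x$ and $k-2<n-k-2$, the index $k-2$ is at least as close to $x/2$ as $n-k-2$ is, using the symmetry $\binom{x}{a}=\binom{x}{x-a}$. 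So the difference is nonnegative.

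The main obstacle is making this work for real $x$ and real $u$. There I would write $g(x)=\sum$ of the increments via the Pascal identity $\binom{x}{m}=\binom{x-1}{m}+\binom{x-1}{m-1}$, or compare the derivatives of $\binom{x}{k-1}$ and $\binom{x}{n-k-1}$ through the ratio $\binom{x}{n-k-1}/\binom{x}{k-1}$. For $x\le n-4$ this ratio should be nonincreasing, since it equals a product of factors $(x-j)/(j+1)$ over the range $k-1\le j<n-k-1$. Combined with checking the endpoint constraint coming from $|\mathcal{B}|\le|\mathcal{A}|$, this should complete the argument.
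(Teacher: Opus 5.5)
The paper only quotes this theorem from Kupavskii--Zakharov, so there is no in-paper proof to compare with. Judged on its own, your argument breaks at the upper end of the admissible range. The reduction to the cross-intersecting pair $(\mathcal A,\mathcal B)$ is fine, and so is the inequality $|\mathcal F|\le\binom{n-1}{k-1}-g(x)$ from Lov\'asz's Kruskal--Katona. The problem is that $|\mathcal B|\le|\mathcal A|$ does not force $x\le n-4$, nor $x<n-3$.

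For the Frankl family $\{F:|F\cap[3]|\ge 2\}$, the element $1$ has maximum degree and $|\mathcal B|=\binom{n-3}{k-2}=\binom{n-3}{n-k-1}$, so $x=n-3$. In general your two inequalities only give $\binom{x}{n-k-1}+\binom{x}{k-1}\le\binom{n-1}{k-1}$. At $x=n-3$ the left side is $\binom{n-2}{k-1}<\binom{n-1}{k-1}$, and the inequality first fails at some $x_{\max}\in(n-3,n-2)$. On this extra range your bound can be strictly weaker than the target, so no monotonicity property of $g$ can finish the proof. For instance, take $k=u=3$ and $n=100$. The target is $\binom{99}{2}-\binom{96}{2}+1=292$. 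At $x_{\max}\approx 97.15$, where $\binom{99}{2}-\binom{x}{2}=\binom{x}{96}$, your bound equals $2\binom{x_{\max}}{96}\approx 360$. So $g(x_{\max})<g(n-4)$.

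Nor is this range vacuous. For $n=21$, $k=10$, the family $\{F:|F\cap[5]|\ge 3\}$ has element $1$ of maximum degree $105040$ and $\gamma(\mathcal F)=53768>\binom{18}{8}=\binom{n-3}{k-2}$, so here $x\in(n-3,n-2)$. You therefore need a genuinely additional ingredient for intersecting families with $\gamma(\mathcal F)>\binom{n-3}{k-2}$. The Lov\'asz bound together with the single inequality $|\mathcal B|\le|\mathcal A|$ does not carry enough information there, and finer structure has to be used. Kupavskii and Zakharov, for example, combine Kruskal--Katona with a separate lemma on regular bipartite graphs.

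There is also a secondary problem in the real-variable step. Each factor $(x-j)/(j+1)$ is increasing in $x$, so $\binom{x}{n-k-1}/\binom{x}{k-1}$ is increasing, not nonincreasing. Writing $g=\binom{x}{k-1}\bigl(1-\binom{x}{n-k-1}/\binom{x}{k-1}\bigr)$ then yields no monotonicity. Pascal's identity only links $x$ and $x-1$, so your integer computation does not transfer to non-integer $x$ either. Thus even on $[n-u-1,n-3]$, where $g(x)\ge g(n-u-1)$ is plausible, the real-$u$ case remains unproved, and that is the actual content of the statement.
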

 Let us note that, for each integer $u\in[3,k]$, the theorem above proves that, if $\mathcal{F}\subseteq\binom{[n]}{k}$ is $1$-intersecting with $\gamma(\mathcal{F})\geq\gamma(\mathcal{L}(n,k,1,u))$, then $|\mathcal{F}|\leq|\mathcal{L}(n,k,1,u)|$. In particular, the case $u=k$ yields the upper bound in the Hilton--Milner theorem.
 
 In \cite{Kupavskii-2025}, Kupavskii determined, for every $1\leq m\leq n-k$, the structure of maximum-sized $1$-intersecting families with diversity at least $m$. This
 completes the classification throughout the range between  $\gamma(\mathcal{L}(n,k,1,k))=1$ and $\gamma(\mathcal{L}(n,k,1,k-1))=n-k$. For general $t\geq2$, the authors established in \cite{Wen-Lv-2026+} the following $t$-intersection version of Theorem \ref{thmK-Z} (for integer values of $u$) for $n=\Omega(k^3)$. More precisely, let $k\geq t+2\geq4$, $n\geq7(k-t+1)k^2$ and $u\in[2,k-t]$. If $\mathcal{F}\subseteq\binom{[n]}{k}$ is $t$-intersecting with $\gamma_t(\mathcal{F})\geq\gamma_t(\mathcal{L}(n,k,t,u))$, then either $\mathcal{F}\lesssim\mathcal{A}(n,k,t)$, or $|\mathcal{F}|\leq|\mathcal{L}(n,k,t,u)|$. As a consequence of Theorem \ref{thmlarge}, we obtain the following strengthening of the Hilton--Milner--Frankl theorem, which determines largest $t$-intersecting
families for every prescribed lower bound $m$ on the
$t$-diversity in the range $m\leq\gamma_t(\mathcal{L}(n,k,t,k-t))=t(n-k)$.
 
\begin{theorem}\label{thmdiv}
Let $k\geq t+4\geq6$ and $n\geq t+6\cdot\max\{(t+2)^2, k(k-t)\}$, and let $m$ be an integer with $1\leq m\leq t(n-k)$. If $\mathcal{F}\subseteq{[n]\choose k}$ is a $t$-intersecting family with maximum size under the condition $\gamma_t(\mathcal{F})\geq m$, then either $\mathcal{F}\cong\mathcal{A}(n,k,t)$, or the following holds, where $c=\lceil m/t\rceil$. 
\begin{itemize}
\item[\rm(i)]$\mathcal{F}\cong\mathcal{H}(n,k,t)$ for $c=1$.
\item[\rm(ii)]$\mathcal{F}\cong\mathcal{K}(n,k,t,c)$ for $2\leq c\leq k-t-2$.
\item[\rm(iii)]$\mathcal{F}\cong\mathcal{L}(n,k,t,k-t)$ for $c\geq k-t-1$.
\end{itemize}
\end{theorem}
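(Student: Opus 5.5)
The plan is to reduce Theorem \ref{thmdiv} to the classification in Theorem \ref{thmlarge}, and then to compare explicit sizes and diversities.

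\emph{Reduction to maximal families.} Let $\mathcal{F}$ be of maximum size subject to $\gamma_t(\mathcal{F})\geq m$. Enlarging $\mathcal{F}$ to a maximal $t$-intersecting family $\mathcal{F}'\supseteq\mathcal{F}$ cannot decrease the diversity. Indeed, adding a set raises $|\mathcal{F}|$ by one and raises $\Delta_t$ by at most one, so $\gamma_t$ does not drop. Maximality of $|\mathcal{F}|$ then forces $\mathcal{F}=\mathcal{F}'$, so $\mathcal{F}$ is maximal. It is also non-trivial, since $m\geq1$.

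\emph{Size lower bound.} Next I would show $|\mathcal{F}|\geq\binom{n-t}{k-t}-\binom{n-k}{k-t}$ by exhibiting a competitor. Since $m\leq t(n-k)=\gamma_t(\mathcal{L}(n,k,t,k-t))$, the family $\mathcal{L}(n,k,t,k-t)$ is admissible. By the size formula,
$$|\mathcal{L}(n,k,t,k-t)|=\binom{n-t}{k-t}-\binom{n-k}{k-t}+t(n-k),$$
which exceeds the threshold. Hence $|\mathcal{F}|$ exceeds it as well, and Theorem \ref{thmlarge} applies. It gives that $\mathcal{F}$ is isomorphic to one of
$$\mathcal{A}=\mathcal{L}(n,k,t,2),\quad \mathcal{L}(n,k,t,k-t),\quad \mathcal{H}=\mathcal{L}(n,k,t,k-t+1),\quad \mathcal{K}(n,k,t,c')\ (2\leq c'\leq k-t).$$

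\emph{Optimization over the list.} It remains to pick the largest admissible member of this finite list.
\begin{itemize}
\item The diversities are $\gamma_t(\mathcal{H})=t$, $\gamma_t(\mathcal{K}(n,k,t,c'))=c't$, $\gamma_t(\mathcal{L}(n,k,t,k-t))=t(n-k)$, and $\gamma_t(\mathcal{A})=t\binom{n-t-2}{k-t-1}$, which is huge.
\item The constraint $\gamma_t\geq m$ therefore reads $c'\geq c=\lceil m/t\rceil$ for the $\mathcal{K}$-type families. It reads $c\leq1$ for $\mathcal{H}$, and it always holds for $\mathcal{L}(n,k,t,k-t)$ and for $\mathcal{A}$.
\item The sizes satisfy $|\mathcal{K}(n,k,t,c')|=B+\binom{n-k-c'}{k-t-c'}+c't$, where $B=\binom{n-t}{k-t}-\binom{n-k}{k-t}$.
\item The term $\binom{n-k-c'}{k-t-c'}$ decreases in $c'$ by a factor of roughly $(k-t-c')/(n-k)$ per step. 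This dominates the gain of $t$ per step as long as the binomial is large.
\end{itemize}
So among admissible $\mathcal{K}$'s the best is $c'=c$. One then compares it with $\mathcal{H}$ when $c=1$ (where $\mathcal{H}=\mathcal{K}(\cdot,1)$ is larger) and with $\mathcal{L}(n,k,t,k-t)$, whose excess over $B$ is $t(n-k)$.

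\emph{Main obstacle: the comparison with $\mathcal{L}(n,k,t,k-t)$.} The required facts are:
\begin{itemize}
\item for $c\leq k-t-2$: $\binom{n-k-c}{k-t-c}+ct>t(n-k)$;
\item for $c\geq k-t-1$: the reverse inequality.
\end{itemize}
For $c=k-t-1$, the $\mathcal{K}$-excess is $(n-2k+t+1)+(k-t-1)t$. This is less than $t(n-k)$ since $t\geq2$ and $n$ is large. For $c=k-t-2$, the excess is $\binom{n-2k+t+2}{2}+(k-t-2)t$. This exceeds $t(n-k)$ once $n-2k$ is of order $\sqrt{t}\,$ or more, and $n\geq 6(t+2)^2$ suffices. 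For smaller $c$, monotonicity in $c$ finishes the comparison.

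\emph{Handling $\mathcal{A}$.} Finally, $\mathcal{A}$ is always admissible. Whenever $|\mathcal{A}|$ is at least the best of the other candidates, $\mathcal{F}\cong\mathcal{A}$; this is exactly the first alternative of the statement. If ties occur with other families, the statement still allows $\mathcal{A}$ as an option, so no separate argument is needed. This completes the plan.
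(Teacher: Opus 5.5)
Your proposal is correct and follows the paper's proof essentially step for step. You reduce to a maximal non-trivial family via monotonicity of $\gamma_t$, use $\mathcal{L}(n,k,t,k-t)$ as a competitor to reach the threshold of Theorem \ref{thmlarge}, and then read off the answer from the diversities and the strict size ordering; the paper simply cites that ordering from Lemma \ref{lemmacountingfamily} rather than rederiving it.

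One correction to your rederivation. For $c=k-t-2$ the difference is $\binom{N}{2}-tN$ with $N=n-2k+t+2$, which is positive exactly when $n\geq 2k+t$. So $n-2k$ must be at least of order $t$, not $\sqrt{t}$, and this follows from the $6k(k-t)$ part of the hypothesis rather than from $n\geq 6(t+2)^2$ alone.
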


The rest of this paper is organized as follows. In the next section, we introduce the peeling procedure and a crucial technique (Lemma \ref{lemmamaximal}) for applying it to maximal $t$-intersecting families, and collect several important properties. In Section \ref{secfin-stru}, we first prove three structural lemmas (Lemmas \ref{lemmafin-stru0}, \ref{lemmafin-stru1-single} and \ref{lemmafin-stru2}), which provide quantitative estimates and structural characterizations in terms of the families produced by the procedure. Then we prove Theorems \ref{thmlarge} and \ref{thmdiv}. In Section \ref{secspreadapproximation}, we establish Theorem \ref{thmstrongsp} utilizing the spread approximation method, and then prove our removal theorem (Theorem \ref{thmremoval}), from which Theorem \ref{corothmremoval} follows almost immediately. Finally, Section \ref{secremark} contains some concluding remarks.
\section{Preliminaries}\label{secpeeling}
For families $\mathcal{A}$ and $\mathcal{S}$ of subsets of $[n]$ and a subset $X$ of $[n]$, we write
\begin{align*}
	\mathcal{A}[X]&:=\{A:A\in\mathcal{A},X\subseteq A\}.\\
	\mathcal{A}[\overline{X}]&:=\{A:A\in\mathcal{A},X\nsubseteq A\}.\\
	\mathcal{A}(X)&:=\{A\setminus X:A\in\mathcal{A},X\subseteq A\}.\\
	\mathcal{A}[\mathcal{S}]&:=\left\{A:A\in\mathcal{A}, S\subseteq A\;\mbox{for some}\;S\in\mathcal{S}\right\}.
\end{align*}
Note that $\mathcal{A}(X)$ has the same size as $\mathcal{A}[X]$, while it is a family of subsets of $[n]\setminus X$. Recall from (\ref{equdefDelta}) that the maximum $s$-degree of a family $\mathcal{A}$ is defined to be $\Delta_s(\mathcal{A})=\max_{S\in\binom{[n]}{s}}|\mathcal{A}[S]|$. A family $\mathcal{A}$ is said to be \emph{$r$-spread} if 
$$\Delta_s(\mathcal{A})\leq r^{-s}|\mathcal{A}|\;\;\mbox{for}\;\;s=0,1,2,\ldots$$
Spreadness plays an important role in a series of recent major advances in combinatorics (see e.g. \cite{sunflower,Kupavskii-2024}). The following is a useful inductive property.
\begin{lemma}[\cite{Kupavskii-2024}]\label{lemmaspreadness-size}
	If $\mathcal{F}\subseteq\binom{[n]}{k}$ and $|\mathcal{F}|>r^k$ for some $r>1$, then there is a subset $X$ of $[n]$ with size smaller than $k$ such that $\mathcal{F}(X)$ is $r$-spread.
\end{lemma}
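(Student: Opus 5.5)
The plan is a maximality (greedy peeling) argument on the size of the set $X$ that we fix. Set $\mathcal{F}_0=\mathcal{F}$ and $X_0=\emptyset$. At each stage $\mathcal{F}_i=\mathcal{F}(X_i)\subseteq\binom{[n]\setminus X_i}{k-|X_i|}$, and we keep the invariant
$$|\mathcal{F}_i|\geq r^{-|X_i|}|\mathcal{F}|>r^{k-|X_i|}.$$
If $\mathcal{F}_i$ is $r$-spread, we stop and take $X=X_i$. Otherwise there is some $s\geq1$ and an $s$-set $S$ (necessarily disjoint from $X_i$) with $|\mathcal{F}_i[S]|>r^{-s}|\mathcal{F}_i|$. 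We then put $X_{i+1}=X_i\cup S$, which gives $\mathcal{F}_{i+1}=\mathcal{F}_i(S)=\mathcal{F}(X_{i+1})$ and
$$|\mathcal{F}_{i+1}|>r^{-s}|\mathcal{F}_i|\geq r^{-|X_{i+1}|}|\mathcal{F}|,$$
so the invariant is preserved. Since $|X_i|$ strictly increases, the procedure terminates.

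It remains to check that the final $X$ has $|X|<k$. This follows from the invariant. The family $\mathcal{F}(X)$ consists of $(k-|X|)$-sets. If $|X|=k$, then $\mathcal{F}(X)\subseteq\{\emptyset\}$ has size at most $1$, whereas the invariant gives $|\mathcal{F}(X)|>r^{0}=1$, a contradiction. Because each step adds an $S\subseteq F\setminus X_i$ for some $F\in\mathcal{F}$, we also have $|X|\leq k$ throughout, so in fact $|X|\leq k-1$.

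A cleaner way to phrase the same argument is to choose, among all $X$ with $|\mathcal{F}(X)|\geq r^{-|X|}|\mathcal{F}|$, one that is inclusion-maximal (equivalently, of maximum size). The empty set qualifies, so such an $X$ exists. Maximality then gives $r$-spreadness directly: for every nonempty $S$ disjoint from $X$,
$$|\mathcal{F}(X)[S]|=|\mathcal{F}(X\cup S)|<r^{-|X|-|S|}|\mathcal{F}|\leq r^{-|S|}|\mathcal{F}(X)|.$$
The case $s=0$ of the spreadness condition is trivial. The bound $|X|<k$ follows exactly as above.

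The only subtle point is to keep the inequality strict so that the case $|X|=k$ is excluded. This is where the hypothesis $|\mathcal{F}|>r^k$ is used: at $|X|=k$ the invariant would force $|\mathcal{F}(X)|>1$, which is impossible for a family of $0$-sets. Apart from this, the argument is routine bookkeeping.
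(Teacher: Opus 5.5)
Your proof is correct. It is the standard argument of Kupavskii--Zakharov \cite{Kupavskii-2024}, which the paper cites rather than reproves: choose $X$ maximal with $|\mathcal{F}(X)|\geq r^{-|X|}|\mathcal{F}|$, read off $r$-spreadness from maximality, and rule out $|X|\geq k$ using $|\mathcal{F}(X)|>r^{k-|X|}$; the same maximality trick appears in the paper's proof of Claim~\ref{claimthmstrongsp2}(ii), and your only slip is the word ``equivalently'', since inclusion-maximal and maximum-size are different notions, though either choice makes the argument go through.
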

For a family $\mathcal{F}$ of subsets of $[n]$, a  \emph{$t$-cover} of $\mathcal{F}$ is a set that intersects every member of $\mathcal{F}$ in at least $t$ elements. If such a set exists, then the \emph{$t$-covering number} of $\mathcal{F}$, denoted $\tau_t(\mathcal{F})$, is defined to be the minimum size of a $t$-cover. In particular, if $\mathcal{F}\subseteq\binom{[n]}{k}$ is $t$-intersecting, then every member of $\mathcal{F}$ itself forms a $t$-cover, and so $\tau_t(\mathcal{F})\leq k$. A $t$-cover $T$ of $\mathcal{F}$ is \emph{minimal} if none of its proper subset is a $t$-cover, that is, for all $T_0\subsetneqq T$, there exists $F\in\mathcal{F}$ such that $|F\cap T_0|<t$. Of course a $t$-cover of size $\tau_t(\mathcal{F})$ is minimal.

Let us introduce the notion of a fingerprint for  $t$-intersecting families.
\begin{lemma}\label{lemmafingerprintdef}
	Let \(\mathcal S\) be a
	\(t\)-intersecting family of subsets of $[n]$. Then there exists a
	\(t\)-intersecting family \(\mathcal S^*\), called a \emph{fingerprint} of
	\(\mathcal S\), such that
	\begin{itemize}
		\item[\rm(i)] every member of \(\mathcal S^*\) is contained in some member
		of \(\mathcal S\);
		\item[\rm(ii)] every member of \(\mathcal S\) contains some member of
		\(\mathcal S^*\);
		\item[\rm(iii)] every member of \(\mathcal S^*\) is a minimal \(t\)-cover
		of \(\mathcal S^*\).
	\end{itemize}
\end{lemma}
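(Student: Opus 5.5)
We will obtain $\mathcal S^*$ by repeatedly shrinking members of $\mathcal S$ while keeping the family $t$-intersecting. The argument is an extremal choice: if a member is not a minimal $t$-cover of the current family, we replace it by a smaller set that still is one. Throughout we treat $\mathcal S$ as finite, which it is as a family of subsets of $[n]$. We also assume $\mathcal S\neq\emptyset$; otherwise $\mathcal S^*=\emptyset$ works trivially.

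Consider all families $\mathcal T$ of subsets of $[n]$ satisfying:
\begin{itemize}
\item[(a)] $\mathcal T$ is $t$-intersecting;
\item[(b)] every member of $\mathcal T$ is contained in some member of $\mathcal S$;
\item[(c)] every member of $\mathcal S$ contains some member of $\mathcal T$.
\end{itemize}
The family $\mathcal T=\mathcal S$ qualifies, so the collection is nonempty. Choose a qualifying $\mathcal T$ minimizing the potential $\Phi(\mathcal T)=\sum_{T\in\mathcal T}|T|$; this is a nonnegative integer, so a minimizer exists. Set $\mathcal S^*=\mathcal T$. Properties (i) and (ii) of the lemma are exactly (b) and (c), and $\mathcal S^*$ is $t$-intersecting by (a). It remains to prove (iii).

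Since $\mathcal T$ is $t$-intersecting, each $T\in\mathcal T$ is a $t$-cover of $\mathcal T$; note this includes $|T\cap T|=|T|\geq t$. Suppose some $T\in\mathcal T$ is not a minimal $t$-cover. Then some proper subset $T_0\subsetneq T$ is still a $t$-cover of $\mathcal T$. Form
$$\mathcal T'=(\mathcal T\setminus\{T\})\cup\{T_0\}.$$

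We check that $\mathcal T'$ qualifies.
\begin{itemize}
\item \emph{$t$-intersecting.} For any $T'\in\mathcal T\setminus\{T\}$ we have $|T_0\cap T'|\geq t$, because $T_0$ is a $t$-cover of $\mathcal T$. Also $|T_0\cap T_0|\geq |T_0\cap T|\geq t$, using that $T_0\subseteq T$ and $T\in\mathcal T$. Intersections among the old members are unchanged.
\item \emph{Condition (b).} $T_0\subseteq T$, and $T$ lies in a member of $\mathcal S$.
\item \emph{Condition (c).} Any member of $\mathcal S$ that contained $T$ now contains $T_0$.
\end{itemize}

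If $T_0$ already belongs to $\mathcal T$, the replacement simply deletes $T$, and all three properties still hold. In either case $\Phi(\mathcal T')<\Phi(\mathcal T)$, since $|T_0|<|T|$ or $T$ is removed outright. This contradicts the minimality of $\Phi$. Hence every member of $\mathcal S^*$ is a minimal $t$-cover of $\mathcal S^*$, which proves (iii).

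The only delicate point is that minimality in (iii) is measured against $\mathcal S^*$ itself, not against $\mathcal S$. This is why the shrinking step must verify that $T_0$ still $t$-intersects itself and all other current members, which the $t$-cover property gives directly. Beyond that the argument is a routine minimal-counterexample argument.
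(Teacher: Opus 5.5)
Your proof is correct. It uses the same basic idea as the paper: shrink members of $\mathcal S$ to subsets while keeping the family $t$-intersecting. The bookkeeping differs, however.

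The paper makes a single ordered pass. It replaces each $S_i$ once by an inclusion-minimal subset $S_i^*\subseteq S_i$ that $t$-intersects every set in the current sequence $(S_1^*,\ldots,S_{i-1}^*,S_i,S_{i+1},\ldots,S_u)$, and then deletes repetitions. This is explicit and constructive. However, getting (iii) for the \emph{final} family needs an observation the paper leaves implicit. Suppose $T\subsetneq S_i^*$ and $Y$ is a set in the sequence with $|T\cap Y|<t$. Then $Y$ remains such a witness after it is later shrunk, including the case $Y=S_i$ becoming $S_i^*$, because shrinking only decreases intersections.

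You instead choose a qualifying family minimizing $\Phi(\mathcal T)=\sum_{T\in\mathcal T}|T|$. This makes (iii) immediate by extremality, and it absorbs coinciding members automatically through your case $T_0\in\mathcal T$. As stated it is non-constructive. Still, iterating your exchange step is itself a terminating algorithm, since $\Phi$ strictly drops each time.

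Either version suffices for the rest of the paper, which only ever uses properties (i)--(iii) of a fingerprint (e.g., Case 2 of the removal theorem takes an arbitrary fingerprint).
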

\begin{proof}
One can obtain such a family by repeatedly replacing members of $\mathcal{S}$ by proper subsets that preserve the $t$-intersecting property. Indeed, write $u=|\mathcal{S}|$ for short. Fix an ordering $(S_1,S_2,\ldots,S_u)$ of members of $\mathcal{S}$. We replace each $S_i$ ($i=1,2,\ldots,u$) by an inclusion-minimal subset $S_i^*\subseteq S_i$ that $t$-intersects every set in the sequence, that is, $S_i^*$ intersects every set from $(S_1^*,\ldots,S_{i-1}^*,S_i,S_{i+1},\ldots,S_u)$ in at least $t$ elements, and the property does not hold for any $T\subsetneqq S_i^*$. Then one desired family is obtained by deleting repeated sets in the resulting sequences $(S_1^*,S_2^*,\ldots,S_u^*)$.
\end{proof}

We formulate the peeling procedure as the following algorithm. As mentioned, it was introduced by Kupavskii and Zakharov in \cite{Kupavskii-2024}. Roughly speaking, given a $t$-intersecting family together with an `initial' fingerprint, the procedure outputs a sequence of families encoding structural information on the original family. For simplicity, we use $\binom{[n]}{\leq q}$ to denote the family of subsets of $[n]$ with size at most $q$.\vspace{1em}

\noindent{\bf Algorithm.\label{algo-single}\;}
Suppose that integers \(q\) and \(t\) with
\(q\ge t\), a \(t\)-intersecting family
\(\mathcal S\subseteq\binom{[n]}{\le q}\), and a fingerprint
\(\mathcal S_0\) of \(\mathcal S\) are given. Set \(N=0\). For
\(i=0,1,\ldots,q-t\), do the following:
\begin{itemize}
	\item[(a)] Set $
	\mathcal X_i=\mathcal S_i\cap\binom{[n]}{q-i}.$
	If \(\mathcal S_i=\mathcal X_i\), terminate the procedure and set
	\(N=i\).
	
	\item[(b)] Find a fingerprint of $
	\mathcal S_i\setminus\mathcal X_i$, 
	and denote it by \(\mathcal S_{i+1}\).
\end{itemize}

\noindent Output \(N\) and \((\mathcal S_i,\mathcal X_i)\) for
\(i\le N\).\vspace{1em}

The key to applying this method is to start with a suitably chosen initial fingerprint. Let $\mathcal{F}\subseteq\binom{[n]}{k}$ be $t$-intersecting. We define
\begin{align}
	\mathcal{M}(\mathcal{F})&:=\{T\subseteq[n]:T\;\mbox{is a minimal $t$-cover of $\mathcal{F}, |T|\leq k$}\}.\\
	\mathcal{M}^*(\mathcal{F})&:=\{T\in\mathcal{M}(\mathcal{F}):|T|=\tau_t(\mathcal{F})\}.\label{equmstardef}
\end{align}
The following key lemma suggests that, when $\mathcal{F}$ is maximal, we may start with $\mathcal{M}(\mathcal{F})$. 
\begin{lemma}\label{lemmamaximal}
	Suppose $n\geq 2k$ and $\mathcal{F}\subseteq\binom{[n]}{k}$ is a maximal $t$-intersecting family. Then $\mathcal{M}(\mathcal{F})$ is a fingerprint of $\mathcal{F}$. In particular,
	$$\mathcal{F}=\left\{F\in\binom{[n]}{k}:M\subseteq F\;\mbox{for some}\;M\in\mathcal{M}(\mathcal{F})\right\}.$$
\end{lemma}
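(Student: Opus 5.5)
We need to verify the three fingerprint properties for $\mathcal{M}=\mathcal{M}(\mathcal{F})$, and we prove the displayed identity first, since it drives the other steps.

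\emph{The identity.} For ``$\subseteq$'': each $F\in\mathcal{F}$ is a $t$-cover of $\mathcal{F}$ of size $k$. Shrinking it to an inclusion-minimal $t$-cover $M\subseteq F$ gives $M\in\mathcal{M}$ with $|M|\le k$. For ``$\supseteq$'': if $M\in\mathcal{M}$ and $M\subseteq F\in\binom{[n]}{k}$, then $|F\cap F'|\ge|M\cap F'|\ge t$ for every $F'\in\mathcal{F}$. By maximality, $F\in\mathcal{F}$. This already gives property (ii), and shows that every $M\in\mathcal{M}$ lies in some member of $\mathcal{F}$: since $|M|\le k\le n$, we can extend $M$ arbitrarily to a $k$-set. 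That is property (i).

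\emph{$\mathcal{M}$ is $t$-intersecting.} This is the main point, and it is where $n\ge 2k$ enters. Let $M_1,M_2\in\mathcal{M}$ and suppose $|M_1\cap M_2|<t$. We want to extend $M_1$ to a $k$-set $F_1$ and $M_2$ to a $k$-set $F_2$ with $F_1\cap F_2=M_1\cap M_2$. The sets $M_1\setminus M_2$, $M_2\setminus M_1$ and $M_1\cap M_2$ are disjoint. We add $k-|M_1|$ new elements to $M_1$ and $k-|M_2|$ further new elements to $M_2$, all taken from $[n]\setminus(M_1\cup M_2)$ and pairwise distinct. This needs $|M_1\cup M_2|+(k-|M_1|)+(k-|M_2|)=2k-|M_1\cap M_2|\le n$ elements in total, which holds since $n\ge 2k$. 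By the identity, $F_1,F_2\in\mathcal{F}$, yet $|F_1\cap F_2|=|M_1\cap M_2|<t$. This contradicts the fact that $\mathcal{F}$ is $t$-intersecting.

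\emph{Property (iii).} Each $M\in\mathcal{M}$ is a $t$-cover of $\mathcal{M}$ by the previous step. For minimality, take $T\subsetneq M$. Since $M$ is a minimal $t$-cover of $\mathcal{F}$, there is $F\in\mathcal{F}$ with $|F\cap T|<t$. Choose $M'\in\mathcal{M}$ with $M'\subseteq F$, using (ii). Then $|M'\cap T|\le|F\cap T|<t$, so $T$ is not a $t$-cover of $\mathcal{M}$. Hence $M$ is a minimal $t$-cover of $\mathcal{M}$.

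The only step with real content is the extension argument showing $\mathcal{M}$ is $t$-intersecting. Everything else follows formally from maximality and the identity.
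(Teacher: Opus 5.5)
Your proof is correct and follows essentially the same route as the paper: the extension argument using $n\geq 2k$ to build $F_1\supseteq M_1$, $F_2\supseteq M_2$ with $F_1\cap F_2=M_1\cap M_2$, maximality for properties (i) and (ii), and the same witness argument for minimality in (iii) (take $F_0$ with $|F_0\cap T|<t$ and pass to a member of $\mathcal{M}(\mathcal{F})$ inside $F_0$). The only differences are cosmetic: you establish the displayed identity first and argue $t$-intersection by contradiction, whereas the paper proves $t$-intersection directly and obtains the identity at the end.
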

\begin{proof}
First, let us prove that $\mathcal{M}(\mathcal{F})$	is $t$-intersecting. Let $S_1,S_2\in\mathcal{M}(\mathcal{F})$. Since $|S_1|,|S_2|\leq k$ and $n\geq2k$, there are $F_1,F_2\in\binom{[n]}{k}$ with $S_1\subseteq F_1$, $S_2\subseteq F_2$ and $F_1\cap F_2=S_1\cap S_2$. Note that both $S_1$ and $S_2$ are $t$-covers of $\mathcal{F}$. Then the maximality of $\mathcal{F}$ yields $F_1,F_2\in\mathcal{F}$, and hence $|S_1\cap S_2|=|F_1\cap F_2|\geq t$. Thus $\mathcal{M}(\mathcal{F})$ is $t$-intersecting. 

Next, we verify (i)-(iii) in Lemma \ref{lemmafingerprintdef} for $\mathcal{S}=\mathcal{F}$ and   $\mathcal{S}^*=\mathcal{M}(\mathcal{F})$. For each $S\in\mathcal{M}(\mathcal{F})$, of course $S$ is a $t$-cover of $\mathcal{F}$, and it follows from the maximality of $\mathcal{F}$ that $\mathcal{F}$ contains all $k$-subsets of $[n]$ containing $S$. Then certainly $S$ is contained in some $F\in\mathcal{F}$, and thus (i) holds. For (ii), conversely, every member of $\mathcal{F}$ is a $t$-cover of $\mathcal{F}$ itself, and so one of its subsets belongs to $\mathcal{M}(\mathcal{F})$. It remains to prove that every $S\in\mathcal{M}(\mathcal{F})$ is a minimal $t$-cover of $\mathcal{M}(\mathcal{F})$. For such a set $S$, and for an arbitrary proper subset $S_0$ of $S$, we see that $|F_0\cap S_0|<t$ for some $F_0\in\mathcal{F}$, because $S$ is by definition a minimal $t$-cover of $\mathcal{F}$. Using the property (ii), there exists $S'\subseteq F_0$ with $S'\in\mathcal{M}(\mathcal{F})$. Then clearly $|S_0\cap S'|<t$, implying that $S_0$ is not a $t$-cover of $\mathcal{M}(\mathcal{F})$. Therefore, $S$ is minimal. 
\end{proof}

The following lemma collects several important properties for the output families in the peeling procedure. Parts (i)--(iii) are standard; the sharper estimate in (iv) adapts a counting argument from \cite{Kupavskii-2024-perm}. Let us recall an elementary but useful inequality. 
\begin{equation}\label{equltrt}
	\binom{a-c}{b-c}\leq(a-b+1)^{b-c}\;\;\mbox{for}\;\;a\geq b\geq c.
\end{equation}
This can be easily verified as  $\frac{y}{x}<\frac{y-1}{x-1}$ for $y>x>1$. For simplicity, we define an expression.
\begin{align*}
\varphi(u,t):=\sum_{j=0}^{t}\binom{t}{j}\binom{u}{j}^2(u+1)^{u-j}.
\end{align*}
\begin{lemma}\label{lemmafingerprintproperty-single}
Perform the algorithm to
$\mathcal{S}$, the uniformity $q$ and an initial fingerprint 
$\mathcal{S}_0$ of $\mathcal{S}$. Let $N$ be the terminal index and $(\mathcal{S}_i,\mathcal{X}_i)$ ($i\leq N$) be the output
families. For $i=0,1,\ldots,N$, the following hold.
		\item[\rm(i)] $\mathcal{S}_i\subseteq\binom{[n]}{\leq(q-i)}$ and $\mathcal{X}_{i}\subseteq\binom{[n]}{q-i}$, and both are antichains.
\item[\rm(ii)]$\mathcal{S}[\mathcal{S}_{i-1}]\subseteq\mathcal{S}[\mathcal{X}_{i-1}]\cup\mathcal{S}[\mathcal{S}_i]$ and  $\mathcal{S}\subseteq\left(\bigcup_{j=0}^{i-1}\mathcal{S}[\mathcal{X}_j]\right)\cup\mathcal{S}[\mathcal{S}_i]$ for $i\geq1$. 
\item[\rm(iii)]If $\mathcal{H}$ is a $(q-a)$-uniform subfamily of $\mathcal{S}_i$, then for each $B\subseteq[n]$, it holds that $$|\mathcal{H}[B]|\leq(q-t-i+1)^{q-a-|B|}.$$
\item[\rm(iv)]$|\mathcal{X}_i|\leq\varphi(q-t-i,t)$.
\end{lemma}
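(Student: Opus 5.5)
We prove the four parts in order; (i)–(iii) are routine consequences of how the algorithm is built, and the real work is in (iv).

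\textbf{Part (i).} We induct on $i$. For $i=0$, $\mathcal S_0$ is a fingerprint of $\mathcal S\subseteq\binom{[n]}{\le q}$, so by property (i) of Lemma \ref{lemmafingerprintdef} its members have size at most $q$. If $\mathcal S_i\subseteq\binom{[n]}{\le q-i}$, then $\mathcal S_i\setminus\mathcal X_i\subseteq\binom{[n]}{\le q-i-1}$, and its fingerprint $\mathcal S_{i+1}$ inherits this bound. Each $\mathcal S_i$ is an antichain: if $A\subsetneq B$ both lay in a fingerprint, then $B$ would not be a minimal $t$-cover, since $A\subseteq B$ already $t$-intersects every member. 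Consequently $\mathcal X_i\subseteq\mathcal S_i$ is an antichain as well.

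\textbf{Part (ii).} If $S\in\mathcal S$ contains some $A\in\mathcal S_{i-1}$, then either $A\in\mathcal X_{i-1}$, or $A\in\mathcal S_{i-1}\setminus\mathcal X_{i-1}$. In the latter case $A$ contains a member of $\mathcal S_i$ by property (ii) of fingerprints, so $S\in\mathcal S[\mathcal S_i]$. Iterating this from $\mathcal S=\mathcal S[\mathcal S_0]$ gives the second inclusion.

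\textbf{Part (iii).} We use the standard $t$-cover branching argument. Fix $B$ and suppose $\mathcal H[B]\neq\emptyset$. If $|B|=q-a$, the bound is trivially $1$. Otherwise pick $H\in\mathcal H[B]$. Since $H$ is a minimal $t$-cover of $\mathcal S_i$ and $B\subsetneq H$ (antichain), $B$ is not a $t$-cover of $\mathcal S_i$. Hence there is $T\in\mathcal S_i$ with $|T\cap B|<t$.

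Every $H'\in\mathcal H[B]$ satisfies $|H'\cap T|\ge t$, so $H'$ meets $T\setminus B$. Choosing such an element $x$ and passing to $B\cup\{x\}$ gives
$$|\mathcal H[B]|\le\sum_{x\in T\setminus B}|\mathcal H[B\cup\{x\}]|.$$
The key is to bound the branching factor $|T\setminus B|$ by $q-t-i+1$, not by $|T|$.

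To do this, we would choose $T$ more carefully and use that $|T|\le q-i$ and $|T\cap B|\ge$ something. A cleaner route is to branch on $t$-subsets: each $H'$ contains $(T\cap B)\cup Y$ with $Y\subseteq T\setminus B$ and $|Y|=t-|T\cap B|$. Passing to $B\cup Y$ adds $|Y|$ elements at a cost of $\binom{|T\setminus B|}{|Y|}$. This is at most $(q-i-t+1)^{|Y|}$ by (\ref{equltrt}), with $a=|T\setminus B|+|Y|-1$ or similar, taking $|T|\le q-i$ and $|T\cap B|+|Y|=t$. Inducting on $q-a-|B|$ then yields $(q-t-i+1)^{q-a-|B|}$. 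This exponent-versus-base bookkeeping is the one delicate point in (iii), and we would check it against (\ref{equltrt}) carefully.

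\textbf{Part (iv).} This is the main obstacle: the crude bound from (iii) with $B=\emptyset$ gives only $(q-t-i+1)^{q-i}$, whereas $\varphi(u,t)$ with $u=q-t-i$ is sharper. Following the counting in \cite{Kupavskii-2024-perm}, we proceed as follows.

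1. Fix $X_0\in\mathcal X_i$ (if $\mathcal X_i$ is nonempty); it has size $q-i=u+t$.
2. Every $X\in\mathcal X_i$ satisfies $|X\cap X_0|\ge t$, so $|X_0\setminus X|=j'\le u$. We classify $X$ by $j=|X_0\setminus X|\in[0,u]$.
3. For each $j$, we bound the number of choices in two stages.
   1. Choose $X\cap X_0$, at most $\binom{u+t}{j}$ ways. We would try to refine this to $\binom{t}{\cdot}\binom{u}{\cdot}$ through a Vandermonde-type split, choosing a suitable fixed $t$-subset of $X_0$.
   2. Extend $B=X\cap X_0$ to $X$. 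Here $X$ must avoid $X_0\setminus B$. The minimal-cover property of $B$ (which is not a $t$-cover, because $B\subsetneq X$) gives a witness $T$, and applying (iii) with $|B|=u+t-j$ and uniformity $u+t$ bounds the extensions by $(u+1)^{j}$.
4. Summing the resulting terms over $j$ should produce $\varphi(u,t)=\sum_j\binom tj\binom uj^2(u+1)^{u-j}$ after reindexing $j\mapsto u-j$ and using $\binom{u+t}{j}\le\sum\binom{t}{\cdot}\binom{u}{\cdot}$.

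The extra factor $\binom uj$ presumably arises from choosing which elements of the witness set are used, through a sharper branching that exploits $X\cap X_0$ being avoided by $T$. Making these factors match $\varphi$ exactly is where we expect to spend effort. We would first try the split based on $|X\cap[t\text{-core}]|$, and otherwise accept any bound of the same shape.
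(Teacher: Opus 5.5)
Parts (i)--(iii) are fine. In (iii), your downward induction on $|B|$ branches over the $\binom{|T|-s}{t-s}\le(|T|-t+1)^{t-s}\le(q-t-i+1)^{t-s}$ choices of $Y$, where $s=|T\cap B|<t$. This is a correct and more elementary substitute for the paper's argument. The paper proves the same branching inequality in the form of a bound $r\le q-t-i+1$ on the spreadness of any link $\mathcal L(X)$ with $|\mathcal L(X)|\ge 2$, and then reaches a contradiction via Lemma \ref{lemmaspreadness-size}.

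\textbf{The gap is in (iv).} Write $u=q-t-i$. Your steps 1--3 give $|\mathcal X_i|\le\sum_{j=0}^{u}\binom{u+t}{j}(u+1)^{j}$, and the claim in step 4 that this reindexes to $\varphi(u,t)$ is false.
\begin{itemize}
\item The single term $j=u$ already equals $\binom{u+t}{t}(u+1)^u=r^uT_u$, the crude bound from Lemma \ref{lemmarqt}(i).
\item This strictly exceeds $\varphi(u,t)$ for all $u,t\ge1$, since each term with $j\ge1$ loses the factor $\binom uj/(u+1)^j<1$. For $u=t=1$ you get $5$ against $\varphi(1,1)=3$.
\item No Vandermonde split of $\binom{u+t}{j}$ can repair this. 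With a single reference set $X_0$, a member meeting $X_0$ in exactly $t$ points can have its $t$-core anywhere in $X_0$, and you pay $\binom{u+t}{t}$ to locate it.
\end{itemize}
Settling for ``any bound of the same shape'' does not prove the stated lemma. The sharper form is also what the later estimates use: the proof of Lemma \ref{lemmathmlarge} uses $\varphi(2,t)=\binom t2+12t+9$, whereas your count gives roughly $4.5t^2$.

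\textbf{The missing idea} is to pin down the core with two members of $\mathcal S_i$ (not of $\mathcal X_i$) that meet in exactly $t$ points.
\begin{itemize}
\item Such a pair exists. For $A\in\mathcal S_i$ and $a\in A$, the set $A\setminus\{a\}$ is not a $t$-cover of $\mathcal S_i$. So some $B\in\mathcal S_i$ has $|(A\setminus\{a\})\cap B|<t\le|A\cap B|$, which forces $|A\cap B|=t$.
\item Every $W\in\mathcal X_i$ $t$-intersects both $A$ and $B$. So if $|W\cap A\cap B|=t-j$, then $W$ contains at least $j$ points of $A\setminus B$ and at least $j$ points of $B\setminus A$.
\item Hence $W\supseteq X'\cup Y\cup Z$ with $X'\in\binom{A\cap B}{t-j}$, $Y\in\binom{A\setminus B}{j}$ and $Z\in\binom{B\setminus A}{j}$, where $|A\setminus B|,|B\setminus A|\le u$.
\item Applying (iii) to $\mathcal H=\mathcal X_i$ and the $(t+j)$-set $X'\cup Y\cup Z$ bounds each branch by $(u+1)^{u-j}$.
\item The union bound then gives exactly $\sum_{j}\binom tj\binom uj^2(u+1)^{u-j}=\varphi(u,t)$.
\end{itemize}
The saving comes from the fact that each unit of deficiency in the fixed core forces two new points, one on each side, at a cost of only $\binom uj^2$. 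Your single-set count cannot capture this.
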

\begin{proof}
(i) By the choice of $\mathcal{S}_i$ and $\mathcal{X}_i$, it is clear that
$\mathcal{S}_i\subseteq\binom{[n]}{\leq q-i}$ and
$\mathcal{X}_i\subseteq\binom{[n]}{q-i}$. To see that $\mathcal{S}_i$ is
an antichain, just note that every member of $\mathcal{S}_i$ is a minimal
$t$-cover of $\mathcal{S}_i$. Hence no member of $\mathcal{S}_i$ is
contained in another one. The same holds for $\mathcal{X}_i$ as
$\mathcal{X}_i\subseteq\mathcal{S}_i$.

(ii) Let $i\geq1$. The first inclusion follows directly from
$\mathcal{S}_{i-1}=\mathcal{X}_{i-1}\cup
(\mathcal{S}_{i-1}\setminus\mathcal{X}_{i-1})$ and the fact that every
set in $\mathcal{S}_{i-1}\setminus\mathcal{X}_{i-1}$ contains some member
of $\mathcal{S}_i$. Thus $
\mathcal{S}[\mathcal{S}_{i-1}]
\subseteq
\mathcal{S}[\mathcal{X}_{i-1}]\cup\mathcal{S}[\mathcal{S}_i]$. The second inclusion follows by using the first one repeatedly, together
with $\mathcal{S}\subseteq\mathcal{S}[\mathcal{S}_0]$.

(iii)\;
We proceed by proving a claim.
\begin{claim}\label{claimlemmafingerprintproperty-single}
For each $i\leq N$, if $X$ is a subset of $[n]$ and $\mathcal{L}$ is a subfamily of $\mathcal{S}_i$ such that $\mathcal{L}(X)$ is $r$-spread and $|\mathcal{L}(X)|\geq2$, then $r\leq q-t-i+1$.
\end{claim}
To prove the claim, note that $|\mathcal{L}(X)|\geq2$ implies that set $X$ is a proper subset of some
member of $\mathcal{S}_i$. As every member of $\mathcal{S}_i$ is a minimal
$t$-cover of $\mathcal{S}_i$, there exists $S\in\mathcal{S}_i$ with
$|X\cap S|=:s<t$. Since $\mathcal{L}\subseteq\mathcal{S}_i$ and
$\mathcal{S}_i$ is $t$-intersecting, every member of $\mathcal{L}$ has
intersection at least $t$ with $S$. Hence
$$
\mathcal{L}[X]
=
\bigcup_{Z\in\binom{S\setminus X}{t-s}}\mathcal{L}[X\cup Z].
$$
By picking a $(t-s)$-subset $Z_0$ of $S\setminus X$ maximizing
$|\mathcal{L}[X\cup Z_0]|$, we obtain that
$$
|\mathcal{L}[X]|
\leq
\binom{|S|-s}{t-s}|\mathcal{L}[X\cup Z_0]|
\leq
(q-t-i+1)^{t-s}\cdot r^{-(t-s)}|\mathcal{L}[X]|.
$$
Here we used (\ref{equltrt}), $|S|\leq q-i$ and the $r$-spreadness of $\mathcal{L}(X)$. Thus $r\leq q-t-i+1$, as claimed. 

Let us prove (iii). To the contrary, assume that
$|\mathcal{H}[B]|>(q-t-i+1)^{q-b-a}$ for some $B\in\binom{[n]}{b}$.
Then there is an $r>q-t-i+1\geq1$ such that
$|\mathcal{H}[B]|>r^{q-b-a}$. By Lemma \ref{lemmaspreadness-size}, there
exists a subset $W$ of $[n]\setminus B$ with $|W|<q-b-a$ such that
$\mathcal{H}(B\cup W)=(\mathcal{H}(B))(W)$ is $r$-spread. Since
$|B\cup W|<q-a$, the set $B\cup W$ is a proper subset of some
$F\in\mathcal{H}[B\cup W]$. Therefore
$$
1=|(\mathcal{H}(B\cup W))(F\setminus(B\cup W))|
\leq
r^{-(|F|-|B\cup W|)}|\mathcal{H}(B\cup W)|
\leq
r^{-1}|\mathcal{H}(B\cup W)|.
$$
It follows that $|\mathcal{H}(B\cup W)|\geq\lceil r\rceil\geq2$. This
contradicts Claim \ref{claimlemmafingerprintproperty-single} by identifying $\mathcal{L}$ with $\mathcal{H}$ and $X$
with $B\cup W$. Hence $
|\mathcal{H}[B]|\leq(q-t-i+1)^{q-b-a}.
$

(iv)\;First, we observe that $|A\cap B|=t$ for some $A,B\in\mathcal{S}_i$. Indeed, fix an $A\in\mathcal{S}_i$, then any $(|A|-1)$-subset of $A$ is not a $t$-cover of $\mathcal{S}_i$, since $A$ is minimal. Then some $B\in\mathcal{S}_{i}$ intersects $A$ in exactly $t$ elements. For $0\leq j\leq t$, denote by $\mathcal{D}_j$ the set of triples $(X,Y,Z)$ with $X\in\binom{A\cap B}{t-j}, Y\in\binom{A\setminus B}{j}$ and $Z\in\binom{B\setminus A}{j}$. Every member of $\mathcal{X}_i$ must $t$-intersect both $A$ and $B$, and hence contains $X\cup Y\cup Z$ for some $(X,Y,Z)\in\cup_j\mathcal{D}_j$. Therefore,
\begin{equation*}
\mathcal{X}_i=\bigcup_{0\leq j\leq t}\left(\bigcup_{(X,Y,Z)\in\mathcal{D}_j}\mathcal{X}_i[X\cup Y\cup Z]\right).
\end{equation*}
By the union bound, and by applying (iii) with $\mathcal{H}=\mathcal{X}_i$ and $a=i$, we obtain
\begin{align*}
|\mathcal{X}_i|&\leq\sum_{j=0}^{t}\sum_{(X,Y,Z)\in\mathcal{D}_j}|\mathcal{X}_i[X\cup Y\cup Z]|\\
&\leq\sum_{j=0}^{t}\binom{t}{t-j}\binom{|A\setminus B|}{j}\binom{|B\setminus A|}{j}(q-t-i+1)^{(q-i)-(t+j)}\\
&\leq\sum_{j=0}^{t}\binom{t}{j}\binom{q-i-t}{j}^2(q-t-i+1)^{q-t-i-j}=\varphi(q-t-i,t),
\end{align*}
as desired.
\end{proof}
\section{Characterizing structures via fingerprints}\label{secfin-stru}
In this section, we establish several structural lemmas (Lemmas \ref{lemmafin-stru0}, \ref{lemmafin-stru1-single} and \ref{lemmafin-stru2}), and then give a classification of $t$-intersecting families with size at least $\binom{n-t}{k-t}-\binom{n-k}{k-t}$. To begin with, let us collect several technical estimates.
\begin{lemma}[\cite{Wen-Lv-2026+}]\label{lemmabinom}
	Let $m\geq u+v$. The following hold.
	\begin{itemize}
		\item[\rm(i)]$(1+uv/m)\binom{m-v}{u}\leq\binom{m}{u}\leq(1+m/(uv))\left(\binom{m}{u}-\binom{m-v}{u}\right).$
		\item[\rm(ii)]$\binom{m}{u}-\binom{m-v}{u}\leq v\binom{m-1}{u-1}$ and $\binom{m-v}{u}\geq(1-uv/m)\binom{m}{u}.$
		\item[\rm(iii)]$\binom{m}{u}-\binom{m-v}{u}\geq v\binom{m-1}{u-1}-\binom{v}{2}\binom{m-2}{u-2}$.
	\end{itemize} 
\end{lemma}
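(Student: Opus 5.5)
The plan is to prove all three parts from two elementary ingredients. The first is the ratio identity
$$\binom{m-v}{u}\Big/\binom{m}{u}=\prod_{i=0}^{u-1}\frac{m-v-i}{m-i}=\prod_{i=0}^{u-1}\Bigl(1-\frac{v}{m-i}\Bigr).$$
The second is the telescoping identity
$$\binom{m}{u}-\binom{m-v}{u}=\sum_{j=1}^{v}\binom{m-j}{u-1},$$
obtained by repeated Pascal's rule. The hypothesis $m\ge u+v$ guarantees that every factor in the product lies in $(0,1]$ and that all binomials appearing are well defined.

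\emph{Part (ii).} For the first inequality, each term of the telescoping sum satisfies $\binom{m-j}{u-1}\le\binom{m-1}{u-1}$, which gives the bound $v\binom{m-1}{u-1}$. For the second, drop the index shift and use $1-\frac{v}{m-i}\ge$ something comparable to $1-\frac{v}{m}$. Precisely, I would apply the Weierstrass inequality $\prod(1-x_i)\ge 1-\sum x_i$ to the factors $x_i=v/(m-i)$. This only gives $1-\sum_i v/(m-i)$, which is weaker than $1-uv/m$. So instead I would argue directly: the difference is $\sum_{j=1}^v\binom{m-j}{u-1}\le v\binom{m-1}{u-1}=\frac{uv}{m}\binom{m}{u}$. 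Rearranging yields $\binom{m-v}{u}\ge(1-uv/m)\binom{m}{u}$, so this half also comes out of the telescoping bound.

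\emph{Part (iii).} Write $\binom{m-j}{u-1}=\binom{m-1}{u-1}-\sum_{l=1}^{j-1}\binom{m-1-l}{u-2}$, again by Pascal's rule. Each inner term is at most $\binom{m-2}{u-2}$, so the $j$-th summand is at least $\binom{m-1}{u-1}-(j-1)\binom{m-2}{u-2}$. Summing over $j=1,\dots,v$ gives the error $\sum_j (j-1)=\binom{v}{2}$, which is exactly the claimed bound.

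\emph{Part (i).} For the left inequality, use $\frac{m-i}{m-v-i}\ge\frac{m}{m-v}$ for $0\le i<u$. Then
$$\frac{\binom{m}{u}}{\binom{m-v}{u}}\ge\Bigl(1+\frac{v}{m-v}\Bigr)^u\ge 1+\frac{uv}{m-v}\ge1+\frac{uv}{m}$$
by Bernoulli's inequality. For the right inequality, set $D=\binom{m}{u}-\binom{m-v}{u}$; we need $\binom{m}{u}\le(1+\frac{m}{uv})D$, i.e.\ $\binom{m-v}{u}\le\frac{m}{uv}D$. From the left inequality, $D\ge\binom{m}{u}-\binom{m}{u}/(1+uv/m)=\binom{m}{u}\frac{uv/m}{1+uv/m}$. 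Hence $\frac{m}{uv}D\ge\binom{m}{u}/(1+uv/m)\ge\binom{m-v}{u}$, using the left inequality once more.

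The main obstacle is only bookkeeping: making sure each step uses the right monotonicity of the binomial in $m$ and that $m\ge u+v$ keeps all factors nonnegative. The only genuinely delicate point is the second half of (ii), where the naive product bound is too weak; routing it through the telescoping estimate resolves this.
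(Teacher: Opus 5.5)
Your proposal is correct and complete. The paper gives no proof of this lemma to compare against: it imports the lemma from \cite{Wen-Lv-2026+}.

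Your argument checks out step by step:
\begin{itemize}
\item The Pascal telescoping identity $\binom{m}{u}-\binom{m-v}{u}=\sum_{j=1}^{v}\binom{m-j}{u-1}$ gives both halves of (ii). Applied once more inside each summand, it gives (iii), with error exactly $\sum_{j=1}^{v}(j-1)\binom{m-2}{u-2}=\binom{v}{2}\binom{m-2}{u-2}$.
\item The product formula with $\frac{m-i}{m-v-i}\ge\frac{m}{m-v}$ and Bernoulli's inequality gives the left half of (i). This uses $m-v-i\ge 1$, guaranteed by $m\ge u+v$.
\item The right half of (i) is correctly recast as $\binom{m-v}{u}\le\frac{m}{uv}\bigl(\binom{m}{u}-\binom{m-v}{u}\bigr)$. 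This follows from the left half.
\end{itemize}

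Two cosmetic points:
\begin{itemize}
\item Delete the abandoned Weierstrass detour in (ii). The second inequality there follows directly from the first via $\binom{m-1}{u-1}=\frac{u}{m}\binom{m}{u}$, exactly as you conclude.
\item State explicitly that $u,v\ge 1$, which is implicit in $m/(uv)$. Also state that for $u=1$ in (iii) you use the convention $\binom{a}{-1}=0$, under which Pascal's rule and your inner telescoping remain valid.
\end{itemize}
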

\begin{lemma}\label{lemmacountingfamily}
Suppose $k\geq t+3\geq5$ and $n\geq2k+t$. Then
\begin{align*}
	|\mathcal{K}(n,k,t,1)|&>|\mathcal{K}(n,k,t,2)|>\cdots>|\mathcal{K}(n,k,t,k-t-2)|\\
	&>|\mathcal{L}(n,k,t,k-t)|>|\mathcal{K}(n,k,t,k-t-1)|>|\mathcal{K}(n,k,t,k-t)|.
\end{align*}
\end{lemma}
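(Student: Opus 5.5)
Throughout we use the two formulas
\[
|\mathcal{K}(n,k,t,c)|=\tbinom{n-t}{k-t}-\tbinom{n-k}{k-t}+\tbinom{n-k-c}{k-t-c}+ct
\]
and
\[
|\mathcal{L}(n,k,t,u)|=\tbinom{n-t}{k-t}-\tbinom{n-u-t}{k-t}+t\tbinom{n-u-t}{k-u-t+1}.
\]
All families share the leading term $\binom{n-t}{k-t}-\binom{n-k}{k-t}$, so every comparison reduces to comparing small correction terms.

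\emph{Consecutive $\mathcal{K}$'s.} For $1\le c\le k-t-1$ we have
\[
|\mathcal{K}(n,k,t,c)|-|\mathcal{K}(n,k,t,c+1)|=\tbinom{n-k-c}{k-t-c}-\tbinom{n-k-c-1}{k-t-c-1}-t=\tbinom{n-k-c-1}{k-t-c}-t .
\]
Pascal's rule gives the second equality. For $c\le k-t-1$ the remaining binomial has bottom index at least $1$, so it is at least $n-k-c-1\ge n-2k+t\ge 2t>t$ when $n\ge 2k+t$. This yields the whole chain $|\mathcal{K}(\cdot,1)|>\cdots>|\mathcal{K}(\cdot,k-t)|$, in particular the last inequality $|\mathcal{K}(\cdot,k-t-1)|>|\mathcal{K}(\cdot,k-t)|$.

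\emph{Placing $\mathcal{L}(n,k,t,k-t)$.} Take $u=k-t$. Then $n-u-t=n-k$, so
\[
|\mathcal{L}(n,k,t,k-t)|=\tbinom{n-t}{k-t}-\tbinom{n-k}{k-t}+t\tbinom{n-k}{1}=\text{(leading term)}+t(n-k).
\]
On the $\mathcal{K}$ side, $c=k-t-1$ gives correction $\binom{n-2k+t+1}{1}+(k-t-1)t=n-2k+t+1+(k-t-1)t$. The difference is
\[
t(n-k)-(n-2k+t+1)-(k-t-1)t=(t-1)(n-k)-k(t-1)+\dots
\]
and simplifies to $(t-1)(n-2k+t)+k-2t-1+\dots$; I would redo this simplification carefully. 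With $t\ge2$, $k\ge t+3$ and $n\ge 2k+t$ it is clearly positive, since $(t-1)n$ dominates.

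For $c=k-t-2$ the correction is $\binom{n-2k+t+2}{2}+(k-t-2)t$. We need this to exceed $t(n-k)$. Set $m=n-2k+t+2$, so $m\ge t+2$, and write $n-k=m+k-t-2$. The inequality becomes
\[
\tbinom{m}{2}+(k-t-2)t>t\bigl(m+k-t-2\bigr),\qquad\text{that is,}\qquad \tbinom{m}{2}>tm .
\]
This is equivalent to $m>2t+1$, which means $n-2k+t+2>2t+1$, i.e.\ $n\ge 2k+t$. That is exactly the hypothesis, so this step is the tight one and I would check it carefully. Note that $k-t-2\ge1$ requires $k\ge t+3$.

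The main obstacle is simply this tight comparison at $c=k-t-2$. Every other step is a routine Pascal identity.
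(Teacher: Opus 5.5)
Your argument is the paper's: strip the common term $\binom{n-t}{k-t}-\binom{n-k}{k-t}$. Use Pascal's rule to get $|\mathcal{K}(n,k,t,c)|-|\mathcal{K}(n,k,t,c+1)|=\binom{n-k-c-1}{k-t-c}-t>0$. Then reduce the $c=k-t-2$ comparison to $\binom{m}{2}>tm$ with $m=n-2k+t+2$, which is exactly the condition $n\ge 2k+t$.

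The one thing to fix is your garbled simplification at $c=k-t-1$. The difference is exactly $t(n-k)-(n-2k+t+1)-(k-t-1)t=(t-1)(n-2k+t+1)$, which is positive for $t\ge2$. The paper gets both placements of $\mathcal{L}(n,k,t,k-t)$ at once from $|\mathcal{K}(n,k,t,c)|-|\mathcal{L}(n,k,t,k-t)|=\binom{n-k-c}{k-t-c}-t(n-k-c)$.
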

\begin{proof}
It is routine to check that  $|\mathcal{K}(n,k,t,c)|=\binom{n-t}{k-t}-\binom{n-k}{k-t}+\binom{n-k-c}{k-t-c}+ct$ for $2\leq c\leq k-t$, and $|\mathcal{L}(n,k,t,k-t)|=\binom{n-t}{k-t}-\binom{n-k}{k-t}+t(n-k)$. So 
	$$|\mathcal{K}(n,k,t,c)|-|\mathcal{K}(n,k,t,c+1)|=\binom{n-k-c-1}{k-t-c}-t\geq n-k-c-1-t>0$$
	for $1\leq c\leq k-t-1$. Next, we have 
	$$|\mathcal{K}(n,k,t,c)|-|\mathcal{L}(n,k,t,k-t)|=\binom{n-k-c}{k-t-c}-t(n-k-c).$$
	Then $|\mathcal{K}(n,k,t,k-t-2)|>|\mathcal{L}(n,k,t,k-t)|>|\mathcal{K}(n,k,t,k-t-1)|$.
\end{proof}
Let us recall 
\begin{align}
	\varphi(u,t)=\sum_{j=0}^{t}\binom{t}{j}\binom{u}{j}^2(u+1)^{u-j}.\label{equfunphi}
\end{align}
\begin{lemma}\label{lemmarqt}
	Suppose $0<\varepsilon<1$ and $r,q,t\geq1$ with $q\geq t+1$ and  $\varepsilon r\geq eq$. For $1\leq u\leq q-t$, set $T_{u}=\binom{u+t}{t}(u+1)^u/r^u$. For $t+1\leq m\leq q$, set two expressions as follows.
\begin{align}
	\Phi(r,q,t,m)&=\sum_{i=0}^{q-m}\varphi(q-t-i,t)/r^{q-t-i}.\\
	\Psi(r,q,t,m)&=\sum_{j=m-t}^{q-t}T_j=\sum_{i=0}^{q-m}\binom{q-i}{t}\left(\frac{q-t-i+1}{r}\right)^{q-t-i}.\label{equfunPsi}
\end{align}
The following hold.
\begin{itemize}
\item[\rm(i)]$\varphi(u,t)\leq r^uT_u$, and $\Phi(r,q,t,m)\leq\varphi(m-t,t)/r^{m-t}+\Psi(r,q,t,m+1)\leq\Psi(r,q,t,m)$.
\item[\rm(ii)]$T_{j+1}\leq\varepsilon T_j$ for $1\leq j\leq q-t-1$, and $\Psi(r,q,t,m)<T_{m-t}/(1-\varepsilon).$
\end{itemize}
\end{lemma}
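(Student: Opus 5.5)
The plan is to prove the two parts separately, by direct manipulation of the explicit expressions. The only tools needed are Vandermonde's identity and the elementary bound $(1+1/x)^x<e$.

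\emph{Part (i).} To show $\varphi(u,t)\le r^uT_u=\binom{u+t}{t}(u+1)^u$, I will compare the sums term by term. For $0\le j\le u$ we have $\binom{u}{j}\le u^j\le (u+1)^j$, so $\binom{u}{j}^2(u+1)^{u-j}\le \binom{u}{j}(u+1)^u$. Terms with $j>u$ vanish. Hence
$$\varphi(u,t)\le (u+1)^u\sum_{j=0}^{t}\binom{t}{j}\binom{u}{j}=(u+1)^u\binom{u+t}{t},$$
where the last step is Vandermonde's identity, using $\binom{u}{j}=\binom{u}{u-j}$.

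Next, the two expressions given for $\Psi$ agree under the substitution $j=q-t-i$, since then $\binom{j+t}{t}=\binom{q-i}{t}$. For the chain of inequalities on $\Phi$, I split off the last term $i=q-m$ of $\Phi(r,q,t,m)$, which is exactly $\varphi(m-t,t)/r^{m-t}$. For each remaining $i\le q-m-1$, the bound just proved gives $\varphi(q-t-i,t)/r^{q-t-i}\le T_{q-t-i}$. Summing over these $i$, the index $q-t-i$ runs over $[m+1-t,q-t]$, so the sum equals $\Psi(r,q,t,m+1)$. Applying the bound once more, $\varphi(m-t,t)/r^{m-t}\le T_{m-t}$, and adding this to $\Psi(r,q,t,m+1)$ gives $\Psi(r,q,t,m)$.

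\emph{Part (ii).} I compute the ratio of consecutive terms:
$$\frac{T_{j+1}}{T_j}=\frac{j+1+t}{j+1}\cdot\frac{(j+2)^{j+1}}{(j+1)^j\,r}=\frac{j+1+t}{j+1}\cdot\frac{j+2}{r}\Bigl(1+\tfrac{1}{j+1}\Bigr)^{j}.$$
A crude bound $(1+1/(j+1))^j<e$ loses a factor that is too large, so this is the step that needs care. The right estimate is
$$\Bigl(1+\tfrac1{j+1}\Bigr)^{j}=\Bigl(1+\tfrac1{j+1}\Bigr)^{j+1}\Big/\Bigl(1+\tfrac1{j+1}\Bigr)<e\,\frac{j+1}{j+2}.$$
With this, the factors cancel and give $T_{j+1}/T_j<e(j+1+t)/r$. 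Since $j\le q-t-1$, we have $j+1+t\le q$, so the ratio is at most $eq/r\le\varepsilon$.

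The final bound on $\Psi$ then follows by comparison with a geometric series. By the ratio bound, $T_j\le \varepsilon^{\,j-(m-t)}T_{m-t}$ for $m-t\le j\le q-t$. Therefore
$$\Psi(r,q,t,m)\le T_{m-t}\sum_{s\ge0}\varepsilon^s=\frac{T_{m-t}}{1-\varepsilon}.$$
The inequality is strict because $\Psi$ is a finite sum of positive terms while the geometric series is infinite, and $T_{m-t}>0$.
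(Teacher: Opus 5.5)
Your proof is correct and follows essentially the same route as the paper: a termwise bound plus Vandermonde for $\varphi(u,t)\le r^uT_u$, and splitting off the $j=m-t$ term for the chain in (i). For (ii) you likewise get the ratio estimate $T_{j+1}/T_j<e(j+t+1)/r\le eq/r\le\varepsilon$ and then sum a geometric series. The differences are cosmetic: you use $\binom{u}{j}\le(u+1)^j$ where the paper notes that $\binom{u}{j}(u+1)^{u-j}$ is decreasing in $j$, and you reach the ratio bound by an extra rearrangement rather than writing it directly as $\frac{j+t+1}{r}\bigl(1+\frac{1}{j+1}\bigr)^{j+1}$.
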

\begin{proof}
(i)\;By the Vandermonde identity, we have
\begin{align*}
\varphi(u,t)&=\sum_{j=0}^{t}\binom{t}{j}\binom{u}{j}^2(u+1)^{u-j}\\&\leq(u+1)^u\sum_{j=0}^{t}\binom{t}{j}\binom{u}{j}=(u+1)^u\binom{t+u}{u}=r^uT_u.
\end{align*}
To see the inequality, just note that the expression $\binom{u}{j}(u+1)^{u-j}$ is decreasing on $j$. Hence
\begin{align*}
\Phi(r,q,t,m)&=\sum_{j=m-t}^{q-t}\varphi(j,t)/r^{j}\leq\varphi(m-t,t)/r^{m-t}+\sum_{j=m-t+1}^{q-t}T_j\\
&=\varphi(m-t,t)/r^{m-t}+\Psi(r,q,t,m+1)\\&\leq T_{m-t}+\Psi(r,q,t,m+1)=\Psi(r,q,t,m).
\end{align*}

(ii)\;Since $\varepsilon r\geq eq$, we have  $$\frac{T_{j+1}}{T_j}=\frac{\binom{j+t+1}{t}(j+2)^{j+1}}{r\binom{j+t}{t}(j+1)^j}=\frac{j+t+1}{r}\cdot\left(1+\frac{1}{j+1}\right)^{j+1}<\frac{q}{r}\cdot e\leq\varepsilon$$
	for $1\leq j\leq q-t-1$. Then
	\begin{align*}
		\Psi(r,q,t,m)&=\sum_{j=m-t}^{q-t}T_j<T_{m-t}(1+\varepsilon+\cdots+\varepsilon^{q-m})<T_{m-t}/(1-\varepsilon),
	\end{align*}
as desired.
\end{proof}
\begin{lemma}[\cite{Wen-Lv-2026+}]\label{lemmaind}
	Let $n\geq(k-t+1)(\ell-t+1)+t$ and $\mathcal{F}\subseteq\binom{[n]}{k}$. Suppose that $G\in\binom{[n]}{\ell}$ and $A\in\binom{[n]}{a}$, where $\ell\ge t$ and $t\le a\le k$. If $G$ is a $t$-cover of $\mathcal{F}$ and $|A\cap G|=s<t$, then 
	\begin{equation*}
		|\mathcal{F}[A]|\leq\binom{n-a}{k-a}-\binom{n-a-(\ell-t+1)}{k-a}\leq(\ell-t+1)\binom{n-a-1}{k-a-1}.
	\end{equation*}
\end{lemma}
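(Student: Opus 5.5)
We need to prove Lemma \ref{lemmaind}: if $G$ is an $\ell$-set $t$-cover of $\mathcal{F}$ and $|A\cap G|=s<t$, then
$$|\mathcal{F}[A]|\le\binom{n-a}{k-a}-\binom{n-a-(\ell-t+1)}{k-a}.$$

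We work inside the link of $A$. Every $F\in\mathcal{F}[A]$ satisfies $|F\cap G|\ge t$. Since $|A\cap G|=s$, the set $F\setminus A$ must meet $G\setminus A$ in at least $t-s$ elements, where $|G\setminus A|=\ell-s$. So $\mathcal{F}(A)$ is a family of $(k-a)$-subsets of the $(n-a)$-set $[n]\setminus A$, each meeting the fixed $(\ell-s)$-set $G':=G\setminus A$ in at least $t-s\ge1$ elements.

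\emph{Reduction to $s=t-1$.} We claim the resulting count is largest when $t-s=1$. Intersecting $G'$ in at least $t-s$ points implies intersecting every subset $G''\subseteq G'$ of size $\ell-s-(t-s-1)=\ell-t+1$ in at least one point. Indeed, at most $t-s-1$ of the $\ge t-s$ intersection points can lie in $G'\setminus G''$, which has exactly $t-s-1$ elements. So fix any such $G''$, which is possible because $\ell-s\ge\ell-t+1$. Then every member of $\mathcal{F}(A)$ meets $G''$, a set of size $\ell-t+1$.

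\emph{Counting.} The number of $(k-a)$-subsets of an $(n-a)$-set meeting a fixed $(\ell-t+1)$-set is exactly
$$\binom{n-a}{k-a}-\binom{n-a-(\ell-t+1)}{k-a}.$$
Since $|\mathcal{F}[A]|=|\mathcal{F}(A)|$, this gives the first inequality. The hypothesis $n\ge(k-t+1)(\ell-t+1)+t$ guarantees $n-a\ge \ell-t+1$, so the binomial makes sense; otherwise the convention $\binom{x}{y}=0$ applies.

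\emph{Second inequality.} This is the union bound over the element of $G''$ hit, or equivalently Lemma \ref{lemmabinom}(ii) with $m=n-a$, $u=k-a$, $v=\ell-t+1$. If $k=a$, both sides are handled trivially: $\mathcal{F}[A]\subseteq\{A\}$, but $A$ itself is not $t$-covered by $G$, so the count is $0$. The condition $m\ge u+v$ needed for Lemma \ref{lemmabinom}(ii) follows from the hypothesis on $n$ when $k-a\ge1$. In any case, the union bound alone gives the inequality directly.

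The only point requiring any care is the reduction step choosing $G''$ of size $\ell-t+1$; everything else is routine.
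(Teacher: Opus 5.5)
Your proof is correct and complete. This paper does not prove Lemma \ref{lemmaind}; it imports it from \cite{Wen-Lv-2026+}, so there is no in-paper argument to compare against, and your containment argument stands as a self-contained proof.

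The key step is sound. Because $|G'\setminus G''|=t-s-1$, every $(k-a)$-subset of $[n]\setminus A$ that meets $G'=G\setminus A$ in at least $t-s$ points must meet the fixed $(\ell-t+1)$-set $G''$. Hence $\mathcal{F}(A)$ lies inside the family of $(k-a)$-sets meeting $G''$, whose size is exactly the middle expression. The second inequality is then the union bound over which point of $G''$ is hit.

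Two small remarks:
\begin{itemize}
\item You never need to optimize over $s$. The heading ``the count is largest when $t-s=1$'' overstates what your argument uses; the containment alone does the work.
\item The hypothesis $n\geq(k-t+1)(\ell-t+1)+t$ plays no real role. Since $G''\subseteq[n]\setminus A$, the inequality $n-a\geq\ell-t+1$ holds automatically. So your argument shows both inequalities hold for every $n$.
\end{itemize}
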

The following lemma serves as a key estimate obtained from the $t$-cover method.  
\begin{lemma}[\cite{Wen-Lv-2026+}]\label{lemmakey}
	Let $t\leq m\leq k$, $n\geq(k-t+1)(\ell-t+1)+t$ and $\mathcal{F}\subseteq\binom{[n]}{k}$. Suppose that $\mathcal{G}$ is a collection of 
	$t$-covers of $\mathcal{F}$, where each has size at most $\ell$. If $\tau_t(\mathcal{G})\geq m$, then  $$|\mathcal{F}[X]|\leq(\ell-t+1)^{m-t}\binom{n-m}{k-m}$$
	for all $X\in\binom{[n]}{t}$. In particular, $|\mathcal{F}|\leq(\ell-t+1)^{m-t}\binom{\tau_t(\mathcal{F})}{t}\binom{n-m}{k-m}.$
\end{lemma}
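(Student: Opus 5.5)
Fix $X\in\binom{[n]}{t}$. The plan is to build, one element at a time, a branching tree of $t$-covers of $\mathcal{F}[X]$, using the fact that members of $\mathcal{G}$ are $t$-covers of $\mathcal{F}$ to force branching. Each branching has at most $\ell-t+1$ choices, and we stop once the current set has size $m$. Concretely, we prove by induction on $|A|$ the following statement. Let $A\supseteq X$ with $t\le |A|\le m$, and suppose $A$ is not a $t$-cover of $\mathcal{G}$ (automatic when $|A|<m$, since $\tau_t(\mathcal{G})\ge m$). Then
\[
|\mathcal{F}[A]|\le(\ell-t+1)^{m-|A|}\binom{n-m}{k-m}.
\]
The base case $|A|=m$ is trivial, since $|\mathcal{F}[A]|\le\binom{n-m}{k-m}$.

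For the inductive step, take $|A|=a<m$. Because $a<\tau_t(\mathcal{G})$, there is some $G\in\mathcal{G}$ with $|A\cap G|=s<t$. Every $F\in\mathcal{F}[A]$ meets $G$ in at least $t$ elements, so $F$ must contain at least $t-s\ge1$ elements of $G\setminus A$. The naive step picks one element $g\in G\setminus A$ in $F$ and recurses on $A\cup\{g\}$, giving a factor $|G\setminus A|\le \ell-s$. This is too weak when $s<t-1$.

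The fix, and what I expect to be the main obstacle, is to guarantee only $\ell-t+1$ branches per step. Fix any $t-s-1$ elements $W\subseteq G\setminus A$. If $F\supseteq A$ satisfies $|F\cap G|\ge t$, then $F$ contains at least one element of the set $(G\setminus A)\setminus W$, which has size $\ell-s-(t-s-1)=\ell-t+1$. The reason is that $F$ needs $t-s$ elements from $G\setminus A$, and $W$ supplies at most $t-s-1$ of them. Hence
\[
\mathcal{F}[A]=\bigcup_{g\in (G\setminus A)\setminus W}\mathcal{F}[A\cup\{g\}],
\]
a union of at most $\ell-t+1$ families. Applying induction to each $A\cup\{g\}$, which has size $a+1\le m$, gives the claimed bound. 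When $a+1<m$ the hypothesis that $A\cup\{g\}$ is not a $t$-cover of $\mathcal{G}$ again holds automatically.

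Taking $A=X$ gives $|\mathcal{F}[X]|\le(\ell-t+1)^{m-t}\binom{n-m}{k-m}$. This argument does not seem to need the hypothesis on $n$. I expect that hypothesis is needed only in the paper's sharper version, which uses Lemma \ref{lemmaind} in the final step, and it is harmless here.

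For the ``in particular'' statement, let $T$ be a $t$-cover of $\mathcal{F}$ of size $\tau_t(\mathcal{F})$. Every $F\in\mathcal{F}$ contains some $t$-subset $X$ of $T$. The union bound over the $\binom{\tau_t(\mathcal{F})}{t}$ choices of $X$, combined with the first part, gives
\[
|\mathcal{F}|\le(\ell-t+1)^{m-t}\binom{\tau_t(\mathcal{F})}{t}\binom{n-m}{k-m}.
\]
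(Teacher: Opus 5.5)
Your proof is correct and is essentially the standard argument. The paper only quotes this lemma from \cite{Wen-Lv-2026+}, and your recursion iterates the one-step branching behind Lemma \ref{lemmaind} $m-t$ times: discarding $t-s-1$ elements of $G\setminus A$ forces every $F\in\mathcal{F}[A]$ to meet a fixed set of size at most $\ell-t+1$, and that is exactly the count $\binom{n-a}{k-a}-\binom{n-a-(\ell-t+1)}{k-a}$. The second claim then follows from the union bound over $\binom{T}{t}$. You are also right that the hypothesis on $n$ is not needed for this bound, and the side condition ``$A$ is not a $t$-cover of $\mathcal{G}$'' can be dropped from your inductive statement, since it is automatic for $|A|<m$ and unused when $|A|=m$.
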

We now turn to the main estimates and structural lemmas. We first record the standing assumptions and notation used throughout the remainder of this section. 
\begin{assumption}\label{assumption}
Let $k\geq t+2$, $b=k-t+1$, $r=(n-t)/(k-t)$ and
$\varepsilon\in(0,0.5]$. Assume that $\varepsilon r=ek$. Put $\delta_{-2}=\delta_{-1}=0$, and  $$\delta_{k-m}=\varphi(m-t,t)/r^{m-t-1}+rT_{m-t+1}/(1-\varepsilon)\;\;\mbox{for}\;\;t+2\leq m\leq k,$$
 where $T_{u}=\binom{u+t}{t}(u+1)^u/r^u$ for $2\leq u\leq k-t$, $T_{k-t+1}=0$ and $\varphi$ is defined in (\ref{equfunphi}). Write $\alpha=b/r$ and  $\delta=\delta_{k-t-2}$. 
Suppose that $\mathcal{F}\subseteq\binom{[n]}{k}$ is a maximal
non-trivial $t$-intersecting family. Apply the algorithm to
$\mathcal{F}$ with $q:=k$ and the initial fingerprint 
$\mathcal{S}_0:=\mathcal{M}(\mathcal{F})$, and let $N$ be the terminal index and $(\mathcal{S}_i,\mathcal{X}_i)$ ($i\leq N$) be the output
families. Set
$\mathcal{S}^*=\mathcal{S}_0\cap\binom{[n]}{t+1}$. 
\end{assumption}

With the notation above, and using Lemma \ref{lemmafingerprintproperty-single} (ii), we decompose $\mathcal{F}$ via the output families:
\begin{equation*}
\mathcal{F}=\left(\bigcup_{i=0}^{u-1}\mathcal{F}[\mathcal{X}_i]\right)\cup\mathcal{F}[\mathcal{S}_{u}],\;u=1,2,\ldots,N.
\end{equation*}
Recall from Lemma \ref{lemmafingerprintproperty-single} (i) that $\mathcal{X}_i$ is $(q-i)$-uniform. Roughly speaking, the structure of `early' fingerprints (those $\mathcal{X}_i$ for small $i$) are complicated. Nevertheless, Lemma \ref{lemmafin-stru0} gives upper bounds on the total number of members of $\mathcal{F}$ covered by these families. Part (ii) is particularly important for our argument. It shows that if a minimum $t$-cover belongs to a fingerprint at a later round, then it belongs to every preceding fingerprint, which leads to a much stronger estimate. By contrast, the `later' fingerprints consist of smaller sets, allowing us to analyze their structure. Therefore, in Lemmas \ref{lemmafin-stru1-single} and \ref{lemmafin-stru2}, instead of merely estimating the contribution of the terminal fingerprint $\mathcal{X}_N$, we use the structural information encoded by these fingerprints and the minimal $t$-covers to characterize large $t$-intersecting families.
\begin{lemma}\label{lemmafin-stru0}
With the notation in Assumption \ref{assumption}, the following hold. 
	\begin{itemize}
	\item[\rm(i)]$\delta_0\leq\delta_1\leq\cdots\leq\delta_{k-t-2}$, and for all $m\geq\max\{q-N,t+2\}$, it holds that 
	$$
	\left|\bigcup_{i=0}^{q-m}\mathcal{F}[\mathcal{X}_i]\right|\leq r^{m-t-1}\delta_{q-m}\binom{n-m}{k-m}.
	$$
In particular, $\left|\bigcup_{i=0}^{q-m}\mathcal{F}[\mathcal{X}_i]\right|\leq\delta_{q-m}\binom{n-t-1}{k-t-1}$, and $|\mathcal{F}|\leq\delta\binom{n-t-1}{k-t-1}$ for $N\leq q-t-2$.
	\item[\rm(ii)]If $\mathcal{M}^*(\mathcal{F})\cap\mathcal{S}_{q-j}\neq\emptyset$, then for all $m\geq j$, it holds that
	\begin{equation*}
		\left|\bigcup_{i=0}^{q-m}\mathcal{F}[\mathcal{X}_i]\right|\leq\frac{(m-t+1)^{m-t}}{1-\varepsilon}\binom{\tau_t(\mathcal{F})}{t}\binom{n-m}{k-m}.
	\end{equation*}
\end{itemize}
\end{lemma}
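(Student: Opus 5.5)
Part (i) has two ingredients: the monotonicity of the $\delta_{k-m}$, and a union bound over the rounds $i\le q-m$ fed by Lemma \ref{lemmafingerprintproperty-single}(iv) and Lemma \ref{lemmarqt}.

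\emph{Monotonicity.} Compare $\delta_{k-m}$ with $\delta_{k-m-1}$, which corresponds to $m+1$. By Lemma \ref{lemmarqt}(i),
\[
\varphi(m-t,t)/r^{m-t-1}\le rT_{m-t}.
\]
So $\delta_{k-m}\le rT_{m-t}+rT_{m-t+1}/(1-\varepsilon)$. By Lemma \ref{lemmarqt}(ii) (valid since $\varepsilon r=ek\ge eq$), $T_{m-t+1}\le\varepsilon T_{m-t}$. Comparing this with $\delta_{k-m-1}\ge rT_{m-t}/(1-\varepsilon)$ shows that the sequence is monotone in the stated direction. The endpoint case $T_{k-t+1}=0$ needs a separate check.

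\emph{Main bound of (i).} For each $i\le q-m$, every $X\in\mathcal{X}_i$ has size $q-i\ge m$, so
\[
|\mathcal{F}[X]|\le\binom{n-q+i}{k-q+i}.
\]
Combined with Lemma \ref{lemmafingerprintproperty-single}(iv), the union over $i\le q-m$ has size at most
\[
\sum_{i\le q-m}\varphi(q-t-i,t)\binom{n-q+i}{k-q+i}.
\]
Since $\binom{n-s-1}{k-s-1}\le\binom{n-s}{k-s}\frac{k-s}{n-s}\le\binom{n-s}{k-s}/r$ for $s\ge t$, each term is at most
\[
\varphi(q-t-i,t)\,r^{-(q-i-m)}\binom{n-m}{k-m}.
\]
Factoring out $r^{m-t}\binom{n-m}{k-m}$ leaves $\Phi(r,q,t,m)$. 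By Lemma \ref{lemmarqt}(i),(ii),
\[
\Phi(r,q,t,m)\le\varphi(m-t,t)/r^{m-t}+T_{m-t+1}/(1-\varepsilon)=\delta_{q-m}/r^{m-t-1}.
\]
This gives exactly $r^{m-t-1}\delta_{q-m}\binom{n-m}{k-m}$. The condition $m\ge q-N$ ensures the rounds exist.

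\emph{Consequences.} Use $\binom{n-m}{k-m}\le r^{-(m-t-1)}\binom{n-t-1}{k-t-1}$. When $N\le q-t-2$, apply the decomposition with $u=N$: $\mathcal{S}_N=\mathcal{X}_N$, so $\mathcal{F}$ is the union of the $\mathcal{F}[\mathcal{X}_i]$ for $i\le N$. Take $m=q-N\ge t+2$ and use monotonicity to replace $\delta_N$ by $\delta$.

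Part (ii) is the main obstacle. Let $T\in\mathcal{M}^*(\mathcal{F})\cap\mathcal{S}_{q-j}$, so $|T|=\tau_t(\mathcal{F})\le j$. The first step is to show that $T$ lies in every $\mathcal{S}_i$ with $i\le q-j$. I would argue inductively: $T\in\mathcal{S}_{i+1}$ is contained in some member $S$ of $\mathcal{S}_i$. Every set in $\mathcal{S}_i$ is, via a chain of containments back to $\mathcal{S}_0$, a subset of a minimal $t$-cover of $\mathcal{F}$, and every member of $\mathcal{S}_i$ is also a $t$-cover of $\mathcal{F}$, since it $t$-intersects all of $\mathcal{S}_0$, a fingerprint of $\mathcal{F}$. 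Then minimality of $S$ together with $T$ being a $t$-cover forces $S=T$; if needed, use that $T$ has minimum size among covers.

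Consequently every $X\in\mathcal{X}_i$ ($i\le q-m$) $t$-intersects $T$. So
\[
|\mathcal{X}_i|\le\binom{|T|}{t}\max_{Y}|\mathcal{X}_i[Y]|\le\binom{\tau_t}{t}(q-t-i+1)^{q-i-t}
\]
by Lemma \ref{lemmafingerprintproperty-single}(iii). Summing as before gives at most
\[
\binom{\tau_t}{t}\binom{n-m}{k-m}\sum_{i}(q-t-i+1)^{q-t-i}\,r^{-(q-i-m)}.
\]
The ratio of consecutive terms is at most $e(q-t)/r\le\varepsilon$, so the sum is at most $(m-t+1)^{m-t}/(1-\varepsilon)$. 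The delicate point is the inclusion-chain argument that places $T$ in all earlier fingerprints.
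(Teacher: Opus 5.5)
Your skeleton matches the paper's: a union bound over rounds via Lemma~\ref{lemmafingerprintproperty-single}(iv), the shift $\binom{n-s-1}{k-s-1}\le\binom{n-s}{k-s}/r$, Lemma~\ref{lemmarqt}, and in (ii) placing $T$ in all earlier $\mathcal{S}_i$ and counting via Lemma~\ref{lemmafingerprintproperty-single}(iii). However, three steps are wrong as written.

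\emph{Monotonicity.} You compare $\delta_{k-m}$ with the wrong neighbour. $\delta_{k-m-1}$ (i.e.\ $m+1$) equals $\varphi(m+1-t,t)/r^{m-t}+rT_{m-t+2}/(1-\varepsilon)$. By the very estimate you just proved, with $m+1$ in place of $m$, it is at most $rT_{m-t+1}/(1-\varepsilon)\le\varepsilon rT_{m-t}/(1-\varepsilon)$. So $\delta_{k-m-1}\ge rT_{m-t}/(1-\varepsilon)$ is false. Even if it held, it would give $\delta_{k-m}\le\delta_{k-m-1}$, which is the reverse of $\delta_0\le\delta_1\le\cdots$ and useless for replacing $\delta_N$ by $\delta=\delta_{k-t-2}$ at the end. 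The comparison must be with $\delta_{k-m+1}$ (i.e.\ $m-1$, for $t+3\le m\le k$), whose second summand is exactly $rT_{m-t}/(1-\varepsilon)$; this is what the paper does.

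\emph{Normalization.} $\varphi(m-t,t)/r^{m-t}+T_{m-t+1}/(1-\varepsilon)$ equals $\delta_{q-m}/r$, not $\delta_{q-m}/r^{m-t-1}$. Only the former, multiplied by $r^{m-t}$, yields your (correct) final bound $r^{m-t-1}\delta_{q-m}\binom{n-m}{k-m}$.

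\emph{Part (ii).} The assertion that every member of $\mathcal{S}_i$ is a $t$-cover of $\mathcal{F}$ because it $t$-intersects all of $\mathcal{S}_0$ is false. $\mathcal{S}_{i+1}$ is a fingerprint of $\mathcal{S}_i\setminus\mathcal{X}_i$ only, and need not $t$-intersect the discarded $\mathcal{X}_i$. For $\mathcal{F}=\mathcal{H}(n,k,t)$ one has $\mathcal{X}_0=\{[k+1]\setminus\{i\}:i\in[t]\}$ and $\mathcal{S}_1=\{[t]\}$, and $[t]$ is not a $t$-cover. Moreover, $S$ is only a minimal $t$-cover of $\mathcal{S}_i$, so its minimality says nothing about $T$.

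The repair is the other half of your own sentence. Follow the containments $T\subseteq S\subseteq\cdots\subseteq S_0$ given by property (i) of Lemma~\ref{lemmafingerprintdef} up to some $S_0\in\mathcal{S}_0=\mathcal{M}(\mathcal{F})$. Since $T\in\mathcal{M}(\mathcal{F})$ and $\mathcal{M}(\mathcal{F})$ is an antichain, $T=S_0$, and the whole chain collapses to $T$.

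This repaired argument is valid and differs from the paper's, which runs downward. There, every set discarded before round $q-j$ has size at least $j+1>|T|$, so property (ii) of Lemma~\ref{lemmafingerprintdef} yields $T=T_0\supseteq T_1\supseteq\cdots\supseteq T_{q-j}$ with $T_i\in\mathcal{S}_i$. The antichain property of $\mathcal{S}_{q-j}$ then forces $T_{q-j}=T$. Your upward version needs no size bookkeeping, only $T\in\mathcal{M}(\mathcal{F})$. The subsequent count and geometric-series estimate coincide with the paper's.
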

\begin{proof}
	(i)\;First, by Lemma \ref{lemmarqt}, for $t+3\leq m\leq k$, we have $\varphi(m-t,t)\leq r^{m-t}T_{m-t}$ and $T_{m-t+1}\leq\varepsilon T_{m-t}$. Then
	\begin{align*}
	\delta_{k-m}&=\varphi(m-t,t)/r^{m-t-1}+rT_{m-t+1}/(1-\varepsilon)\\
	&\leq rT_{m-t}+rT_{m-t}\varepsilon/(1-\varepsilon)=rT_{m-t}/(1-\varepsilon)\leq\delta_{k-m+1}.
	\end{align*}
	Hence $\delta_0\leq\delta_1\leq\cdots\leq\delta_{k-t-2}$.
	
	By Lemmas \ref{lemmafingerprintproperty-single} (iv) and
\ref{lemmarqt},
\begin{align*}
	\left|\bigcup_{i=0}^{q-m}\mathcal{F}[\mathcal{X}_i]\right|\bigg/\binom{n-m}{k-m}
	&\leq\sum_{i=0}^{q-m}|\mathcal{X}_i|\binom{n-(q-i)}{k-(q-i)}\bigg/\binom{n-m}{k-m} \\
	&\leq r^{m-t}\sum_{i=0}^{q-m}\varphi(q-t-i,t)/r^{q-t-i}=r^{m-t}\Phi(r,q,t,m).
\end{align*}
Further, by Lemma \ref{lemmarqt} (i),
\begin{align*}
r^{m-t}\Phi(r,q,t,m)&\leq\varphi(m-t,t)+r^{m-t}\Psi(r,q,t,m+1)\\&\leq\varphi(m-t,t)+r^{m-t}T_{m-t+1}/(1-\varepsilon)=r^{m-t-1}\delta_{q-m},
\end{align*} 
and hence the first upper bound in (i) holds. In particular, from $\binom{n-t-1}{k-t-1}\geq r^{m-t-1}\binom{n-m}{k-m}$, we obtain $\left|\bigcup_{i=0}^{q-m}\mathcal{F}[\mathcal{X}_i]\right|\leq\delta_{q-m}\binom{n-t-1}{k-t-1}$.

Assume $N\leq q-t-2$. By the definition, we get $\mathcal{S}_N=\mathcal{X}_N$. Then Lemma \ref{lemmafingerprintproperty-single} (ii) yields
$$\mathcal{F}\subseteq\left(\bigcup_{i=0}^{N-1}\mathcal{F}[\mathcal{X}_i]\right)\cup\mathcal{F}[\mathcal{S}_N]\subseteq\bigcup_{i=0}^{N}\mathcal{F}[\mathcal{X}_i],$$
and hence $|\mathcal{F}|\leq\delta_{N}\binom{n-t-1}{k-t-1}\leq\delta_{q-t-2}\binom{n-t-1}{k-t-1}=\delta\binom{n-t-1}{k-t-1}$.

(ii)\;Fix  $T\in\mathcal{M}^*(\mathcal{F})\cap\mathcal{S}_{q-j}$. Since $|T|=\tau_t(\mathcal{F})$ and every member of  $\mathcal{S}_{q-j}$ has size at most $j$, we have $j\geq\tau_t(\mathcal{F})$. Next, we find that $$T\in\mathcal{S}_{i},\;i=0,1,\ldots,q-j.$$ 
To see this, note that in each step before  $\mathcal{S}_{q-j}$ is output, the algorithm deletes some sets with size at least $q-(q-j-1)=j+1>\tau_t(\mathcal{F})$, and proceed by taking a fingerprint of the remaining sets. Hence, there exists a sequence of sets $T=T_0\supseteq T_1\supseteq\cdots\supseteq T_{q-j}$ such that $T_i\in\mathcal{S}_i$ for $i\leq q-j$. Since $\mathcal{S}_{q-j}$ is an antichain, we deduce that $T_{q-j}=T$, and hence the $T_i$'s coincide. Now for all $i\leq q-j$, the set $T$ forms a $t$-cover of $\mathcal{X}_i$, and so $\mathcal{X}_i=\bigcup_{Z}\mathcal{X}_i[Z]$, where $Z$ ranges over $\binom{T}{t}$. It follows from Lemma \ref{lemmafingerprintproperty-single} (iii) that
\begin{equation*}
|\mathcal{X}_i|\leq\binom{\tau_t(\mathcal{F})}{t}(q-t-i+1)^{q-t-i}.
\end{equation*} 
Using this improved bound, the same argument as (i) yields
\begin{align*}
	\left|\bigcup_{i=0}^{q-m}\mathcal{F}[\mathcal{X}_i]\right|\bigg/\binom{n-m}{k-m}&\leq r^{m-t}\binom{\tau_t(\mathcal{F})}{t}\sum_{i=0}^{q-m}\left(\frac{q-t-i+1}{r}\right)^{q-t-i}\\
	&=r^{m-t}\binom{\tau_t(\mathcal{F})}{t}\sum_{i=m}^{q}\left(\frac{i-t+1}{r}\right)^{i-t}\leq\frac{(m-t+1)^{m-t}}{1-\varepsilon}\binom{\tau_t(\mathcal{F})}{t}.
\end{align*}
To see the last inequality, write $a_i=\left(\frac{i-t+1}{r}\right)^{i-t}$ for $i=m,m+1,\ldots,q$. Then 
$$\frac{a_{i+1}}{a_i}=\frac{i-t+1}{r}\cdot\left(1+\frac{1}{i-t+1}\right)^{i-t+1}<\frac{eq}{r}\leq\varepsilon$$
as $\varepsilon r\geq eq$, and hence $$r^{m-t}\sum_{i=m}^qa_i\leq r^{m-t} a_m(1+\varepsilon+\varepsilon^{q-m})<(m-t+1)^{m-t}/(1-\varepsilon),$$
as required.
\end{proof}
\begin{lemma}\label{lemmafin-stru1-single}
With the notation in Assumption \ref{assumption}, and further suppose $N=q-t-1$. The following hold.
\begin{itemize}
\item[\rm(i)]$\mathcal{S}_{q-t-2}=\mathcal{S}_{q-t-1}$, $\mathcal{S}^*\subseteq\mathcal{S}_{q-t-1}$, and $\mathcal{S}_{q-t-1}\subseteq\binom{Z}{t+1}$ for some $(t+2)$-subset $Z$.
\item[\rm(ii)]Either $|\mathcal{S}^*|\geq3$ and $\mathcal{F}\cong\mathcal{A}(n,k,t)$, or $k\geq t+3$,  $|\mathcal{S}^*|\leq2$ and $|\mathcal{F}|\leq\max\{M_1,M_2\}\cdot\binom{n-t-1}{k-t-1}$, where $M_1:=\delta_{q-t-4}+\alpha(t+2)+32(t+2)^2/r^2$ and $M_2:=2+t\alpha+\frac{64(t+1)}{(1-\varepsilon)r^2}$.
\end{itemize}
\end{lemma}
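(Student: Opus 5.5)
The plan is to first pin down the shape of the last two fingerprints using only the combinatorics of small $t$-intersecting antichains. After that, the bound in (ii) comes from splitting $\mathcal F$ via Lemma~\ref{lemmafingerprintproperty-single}~(ii) and charging each piece to Lemma~\ref{lemmafin-stru0}, Lemma~\ref{lemmaind} or Lemma~\ref{lemmafingerprintproperty-single}~(iii).

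\textbf{Part (i).} Since $\mathcal F$ is non-trivial, no $t$-set is a $t$-cover of $\mathcal F$, so $\mathcal S_0=\mathcal M(\mathcal F)$ has no $t$-sets. The same holds for all later $\mathcal S_i$: a $t$-set $T$ in a fingerprint forces every other member to contain $T$, so the antichain property gives $\mathcal S_i=\{T\}$, and then the procedure could not terminate with a $(t+1)$-uniform $\mathcal S_N$.

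Since $N=q-t-1$, the family $\mathcal S_N$ is a $(t+1)$-uniform $t$-intersecting antichain. Such a family is either a sunflower through a common $t$-set $T$ or lies in $\binom{Z}{t+1}$ for some $(t+2)$-set $Z$. In the sunflower case, and also when $|\mathcal S_N|\le2$ inside some $Z$, some $t$-subset (namely $T$, or $A\cap B$) is a $t$-cover, which contradicts property (iii) of a fingerprint. Hence $\mathcal S_N\subseteq\binom{Z}{t+1}$ with $|\mathcal S_N|\ge3$.

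Next I show $\mathcal X_{q-t-2}=\emptyset$, which gives $\mathcal S_{q-t-2}=\mathcal S_{q-t-1}$. The $(t+1)$-sets of $\mathcal S_{q-t-2}$ form a family whose fingerprint is $\mathcal S_{q-t-1}$; since no $t$-sets occur, they coincide with $\mathcal S_{q-t-1}$. Now take a $(t+2)$-set $B\in\mathcal S_{q-t-2}$ that $t$-intersects three distinct $(t+1)$-subsets of $Z$. Then $|B\cap Z|\ge t+1$. Either $B$ contains a member of $\mathcal S_{q-t-1}$, contradicting the antichain property, or a proper subset of $B$ is still a $t$-cover of $\mathcal S_{q-t-2}$, contradicting minimality. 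This small case check is routine but must be done carefully.

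For $\mathcal S^*\subseteq\mathcal S_{q-t-1}$ I argue as in Lemma~\ref{lemmafin-stru0}~(ii). Every step before index $q-t-1$ deletes only sets of size at least $t+2$. Members of $\mathcal S_0$ of size $t+1$ therefore persist along the descending chains, and the antichain property forces them to survive unchanged.

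\textbf{Part (ii), case $|\mathcal S^*|\ge3$.} Here $\mathcal F$ contains every $k$-set through at least three $(t+1)$-subsets of $Z$. Any $F$ with $|F\cap Z|\le t$ misses one of these subsets, say $A$, in the sense that $A\not\supseteq F\cap Z$. Using $n\ge2k$, I extend $A$ to a $k$-set meeting $F$ in fewer than $t$ elements, which is a contradiction. So $\mathcal F\subseteq\mathcal A(n,k,t)$ up to isomorphism, and maximality gives equality.

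If $k=t+2$, then $\mathcal S_0=\mathcal S_{q-t-2}\subseteq\binom{Z}{t+1}$ with at least three members, so $\mathcal S^*=\mathcal S_0$ and we are in this case. Hence $|\mathcal S^*|\le2$ forces $k\ge t+3$.

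\textbf{Part (ii), case $|\mathcal S^*|\le2$.} I write
\[
\mathcal F\subseteq\bigcup_{i\le q-t-4}\mathcal F[\mathcal X_i]\;\cup\;\mathcal F[\mathcal X_{q-t-3}]\;\cup\;\mathcal F[\mathcal S_{q-t-1}],
\]
using (i) to replace $\mathcal S_{q-t-2}$ by $\mathcal S_{q-t-1}$, and split according to whether $\mathcal S^*$ is empty.

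If $\mathcal S^*=\emptyset$, then $\tau_t(\mathcal F)\ge t+2$, and I bound the three terms as follows.
\begin{itemize}
\item The first term is at most $\delta_{q-t-4}\binom{n-t-1}{k-t-1}$ by Lemma~\ref{lemmafin-stru0}~(i) with $m=t+4$.
\item Each of the at most $t+2$ sets $A\in\binom{Z}{t+1}$ is not a $t$-cover of $\mathcal F$. Lemma~\ref{lemmaind} (with $G$ a member of $\mathcal F$ meeting $A$ in fewer than $t$ points, $\ell=k$) gives $|\mathcal F[A]|\le b\binom{n-t-2}{k-t-2}\le\alpha\binom{n-t-1}{k-t-1}$. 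These sets together contribute $\alpha(t+2)$.
\item The $(t+3)$-uniform family $\mathcal X_{q-t-3}$ still $t$-intersects the three or more $(t+1)$-subsets of $Z$, so each of its members meets $Z$ in at least $t+1$ points. Lemma~\ref{lemmafingerprintproperty-single}~(iii), applied with $B$ ranging over the at most $t+2$ subsets of $Z$ of size $t+1$, gives $|\mathcal X_{q-t-3}|=O((t+2)^2)$. Multiplying by $\binom{n-t-3}{k-t-3}\le r^{-2}\binom{n-t-1}{k-t-1}$ yields the term $32(t+2)^2/r^2$.
\end{itemize}
Together these give the bound $M_1$.

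If $1\le|\mathcal S^*|\le2$, then $\tau_t(\mathcal F)=t+1$ and $\mathcal M^*(\mathcal F)\cap\mathcal S_{q-t-1}\neq\emptyset$. I bound the pieces as follows.
\begin{itemize}
\item The members of $\mathcal S^*$ contribute at most $2\binom{n-t-1}{k-t-1}$.
\item The remaining subsets of $Z$ in $\mathcal S_{q-t-1}$ are not $t$-covers of $\mathcal F$. Handled by Lemma~\ref{lemmaind} as before, they contribute at most $t\alpha$ in total.
\item All earlier layers are controlled by Lemma~\ref{lemmafin-stru0}~(ii) with $m=t+3$. This gives $\frac{4^{3}}{1-\varepsilon}\binom{t+1}{t}\binom{n-t-3}{k-t-3}$, which is at most $\frac{64(t+1)}{(1-\varepsilon)r^2}\binom{n-t-1}{k-t-1}$.
\end{itemize}
Together these give the bound $M_2$.

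I expect the main obstacle to be the bookkeeping for $\mathcal X_{q-t-3}$ in the case $\mathcal S^*=\emptyset$, namely getting the constant $32$ from Lemma~\ref{lemmafingerprintproperty-single}~(iii) through the structure forced by $Z$. The minimality case check in (i) is the other point where I expect to need care.
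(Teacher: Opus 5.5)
\textbf{Gap: the bound on $|\mathcal X_{q-t-3}|$ when $\mathcal S^*=\emptyset$.} Your treatment of (i), of the case $|\mathcal S^*|\ge 3$, and of the case $1\le|\mathcal S^*|\le 2$ matches the paper's. The failure is in the $\mathcal S^*=\emptyset$ case. You assert that every member of $\mathcal X_{q-t-3}$ $t$-intersects the members of $\mathcal S_{q-t-1}\subseteq\binom{Z}{t+1}$, and hence meets $Z$ in at least $t+1$ points. Nothing forces this. $\mathcal S_{q-t-2}$ is a fingerprint of $\mathcal S_{q-t-3}\setminus\mathcal X_{q-t-3}$ only, so the deleted layer is ignored during shrinking. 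When a member $A\cup\{x\}$ is shrunk to $A\in\binom{Z}{t+1}$, a set $W\in\mathcal X_{q-t-3}$ with $x\in W$ and $|W\cap A|=t-1$ loses its $t$-intersection. In (i) the analogous step is fine only because $\mathcal S_{q-t-2}\setminus\mathcal X_{q-t-2}=\mathcal S_{q-t-1}$, i.e.\ no shrinking happens at that level.

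\textbf{Counterexample within the lemma's hypotheses.} Take $t=1$, $k=4$ (so $q-t-3=0$) and $Z=\{1,2,3\}$. Let $\mathcal F$ be the family of all $4$-subsets of $[n]$ containing a member of
\[
\mathcal B=\{123,124,127,128,135,137,139,236,238,239,1689,2579,3478\},
\]
where $124$ stands for $\{1,2,4\}$, etc. For large $n$ one checks the following:
\begin{itemize}
\item $\mathcal F$ is maximal and non-trivial, and $\mathcal M(\mathcal F)=\mathcal B$;
\item no pair is a $1$-cover, so $\mathcal S^*=\emptyset$;
\item the peeling gives $\mathcal X_0=\{1689,2579,3478\}$, $\mathcal S_1=\mathcal S_2=\{12,13,23\}$ and $N=2=q-t-1$.
\end{itemize}
Yet $3478\in\mathcal X_{q-t-3}$ is disjoint from $12\in\mathcal S_{q-t-1}$ and meets $Z$ in only one point. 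So your union over $B\in\binom{Z}{t+1}$ does not cover $\mathcal X_{q-t-3}$.

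\textbf{Repair (the paper's argument).} Pick $S\in\mathcal S_{q-t-3}\setminus\mathcal X_{q-t-3}$. This family is non-empty because each member of $\mathcal S_{q-t-2}$ lies in one of its members. Then $|S|\le t+2$. Also $S$ $t$-intersects every member of $\mathcal X_{q-t-3}$, since both lie in the $t$-intersecting family $\mathcal S_{q-t-3}$. Hence $\mathcal X_{q-t-3}=\bigcup_{T\in\binom{S}{t}}\mathcal X_{q-t-3}[T]$. Lemma~\ref{lemmafingerprintproperty-single}~(iii) with $i=a=q-t-3$ then gives $|\mathcal X_{q-t-3}|\le 4^3\binom{t+2}{2}\le 32(t+2)^2$, which is exactly the term in $M_1$.

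\textbf{Minor bookkeeping point.} When $|\mathcal S^*|=1$, there are up to $t+1$ non-cover sets in $\mathcal S_{q-t-1}$, not $t$. So you need $1+(t+1)\alpha\le 2+t\alpha$, i.e.\ $\alpha\le 1$. This holds since $r\ge b$.
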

\begin{proof}
(i)\;Write
$\mathcal{S}=\mathcal{S}_{q-t-1}$ for short. 	
Since  $\mathcal{S}$ is a $(t+1)$-uniform $t$-intersecting antichain, $\mathcal{S}$ is non-trivial. It follows from the minimality that $\mathcal{S}\subseteq\binom{Z}{t+1}$ for some  $Z\in\binom{[n]}{t+2}$. Since $N=q-t-1$, the algorithm terminates at round $q-t-1$, and hence 	$\mathcal{S}=\mathcal{X}_{q-t-1}$ by the definition of $N$. Since $\mathcal{S}$ is a fingerprint of $\mathcal{S}_{q-t-2}\subseteq\binom{[n]}{\leq(t+1)}$ and $\mathcal{S}$ is $(t+1)$-uniform, we have $\mathcal{S}_{q-t-2}\setminus\mathcal{X}_{q-t-2}=\mathcal{S}$. 

To prove $\mathcal{S}_{q-t-2}=\mathcal{S}$, it suffices to prove $\mathcal{X}_{q-t-2}=\emptyset$. Indeed, assume to the contrary that it is non-empty. Let $W\in\mathcal{X}_{q-t-2}$. Note that $\mathcal{S}$ has at least three members and each of them $t$-intersects $W$. It follows that $|W\cap Z|\geq t+1$, and hence $|W\cap Z|=t+1$ as $\mathcal{S}_{q-t-2}$ is an antichain. So every set from $\mathcal{S}_{q-t-2}$ contains all but at most one element of $Z$, and hence every $(t+1)$-subset of $Z$ is a $t$-cover of $\mathcal{S}_{q-t-2}$. However, this contradicts the minimality of the members of $\mathcal{X}_{q-t-2}$. Hence $\mathcal{X}_{q-t-2}$ must be empty. This yields $S_{q-t-2}=\mathcal{S}$.
	
Note that the sets removed before
round $q-t-1$ have sizes at least $t+2$. Then no member of $\mathcal{S}^*$ is removed, and hence each of them contains some from $\mathcal{S}$,  implying $\mathcal{S}^*\subseteq\mathcal{S}$.
	
(ii)\;If $|\mathcal{S}^*|\geq3$, then $|F\cap Z|\geq t+1$ for all 
	$F\in\mathcal{F}$. To see this, suppose for contradiction
	that $|F\cap Z|=t$ for some $F\in\mathcal{F}$. Since every member of
	$\mathcal{S}^*$ is a $t$-cover of $\mathcal{F}$ and
	$\mathcal{S}^*\subseteq\binom{Z}{t+1}$, each of them 
	must contain the same $t$-set $F\cap Z$. However, there are only two
	$(t+1)$-subsets of $Z$ containing a fixed $t$-subset of $Z$, a
	contradiction. Hence $|F\cap Z|\geq t+1$ for every $F\in\mathcal{F}$,
	and so $\mathcal{F}\lesssim\mathcal{A}(n,k,t)$. By the maximality of
	$\mathcal{F}$, we have $\mathcal{F}\cong\mathcal{A}(n,k,t)$.
	
	It remains to consider the case $|\mathcal{S}^*|\leq2$. Now $k\geq t+3$, as otherwise $\mathcal{S}_{q-t-2}=\mathcal{S}_0$ contains at least three sets of size $t+1$. By Lemma \ref{lemmafingerprintproperty-single} (ii) and the fact that $\mathcal{S}_{q-t-2}=\mathcal{S}$, 
	$$\mathcal{F}\subseteq\left(\bigcup_{i=0}^{q-t-3}\mathcal{F}[\mathcal{X}_i]\right)\cup\mathcal{F}[\mathcal{S}].$$ 
	
	Let us estimate $|\mathcal{F}[\mathcal{S}]|$. Since $\mathcal{S}\subseteq\binom{Z}{t+1}$, of course 
	$|\mathcal{S}|\leq t+2$. For each $S\in\mathcal{S}^*$, we trivially have
	$|\mathcal{F}[S]|\leq\binom{n-t-1}{k-t-1}$. On the other hand, if
	$W\in\mathcal{S}\setminus\mathcal{S}^*$, then $W$ is not a $t$-cover of
	$\mathcal{F}$. Hence Lemma \ref{lemmaind} gives 
\begin{equation}\label{equlemmafin-stru11}
|\mathcal{F}[W]|\leq
b\binom{n-t-2}{k-t-2}\leq\alpha\binom{n-t-1}{k-t-1}\;\;\mbox{for all}\;\;W\in\mathcal{S}\setminus\mathcal{S}^*.
\end{equation}	
 
Note that  $\mathcal{S}_{q-t-3}\setminus\mathcal{X}_{q-t-3}$ is non-empty, because every set from  $\mathcal{S}_{q-t-2}=\mathcal{S}$ is contained in some of its members. Fix an   $S\in\mathcal{S}_{q-t-3}\setminus\mathcal{X}_{q-t-3}$. The set $S$ has size at most $t+2$, and it $t$-intersects every $W\in\mathcal{X}_{q-t-3}$. By Lemma \ref{lemmafingerprintproperty-single} (iii), with $T$ ranging over $\binom{S}{t}$, 
$$|\mathcal{X}_{q-t-3}|\leq\sum_{T}|\mathcal{X}_{q-t-3}[T]|\leq4^3\binom{t+2}{2}.$$
Hence, if $\mathcal{S}^*=\emptyset$, then by Lemma \ref{lemmafin-stru0} (i), (\ref{equlemmafin-stru11}) and the bound above,
\begin{align*}
|\mathcal{F}|&\leq\left|\bigcup_{i=0}^{q-t-4}\mathcal{F}[\mathcal{X}_i]\right|+|\mathcal{F}[\mathcal{X}_{q-t-3}]|+|\mathcal{F}[\mathcal{S}]|\\
&\leq r^3\delta_{q-t-4}\binom{n-t-4}{k-t-4}+64\binom{t+2}{2}\binom{n-t-3}{k-t-3}+\alpha(t+2)\binom{n-t-1}{k-t-1}\\&<(\delta_{q-t-4}+\alpha(t+2)+32(t+2)^2/r^2)\binom{n-t-1}{k-t-1}=M_1\binom{n-t-1}{k-t-1}.
\end{align*}

If $|\mathcal{S}^*|\in\{1,2\}$, then $\tau_t(\mathcal{F})=t+1$ and  $\mathcal{M}^*(\mathcal{F})=\mathcal{S}^*\subseteq\mathcal{S}_{q-t-2}$ from (i). By applying Lemma \ref{lemmafin-stru0} (ii) with $m=t+3$, we obtain
\begin{align}
	|\mathcal{F}|&\leq\left|\bigcup_{i=0}^{q-t-3}\mathcal{F}[\mathcal{X}_i]\right|+|\mathcal{F}[\mathcal{S}^*]|+|\mathcal{F}[\mathcal{S}]\setminus\mathcal{F}[\mathcal{S}^*]|\nonumber\\
	&\leq\frac{64(t+1)}{1-\varepsilon}\binom{n-t-3}{k-t-3}+\left(|\mathcal{S}^*|+(t+2-|\mathcal{S}^*|)\alpha\right)\binom{n-t-1}{k-t-1}\nonumber\\
	&\leq\left(\frac{64(t+1)}{(1-\varepsilon)r^2}+2+t\alpha\right)\binom{n-t-1}{k-t-1}=M_2\binom{n-t-1}{k-t-1},\label{equlemmafin-stru25}
\end{align}
where in the last step we used $|\mathcal{S}^*|\leq2$ and $r\geq b$. 
\end{proof}
\begin{lemma}\label{lemmafin-stru2}
With the notation in Assumption \ref{assumption}, and further suppose $N=q-t$. Then there are $j\in[q-t-1]$ and $X\in\binom{[n]}{t}$ such that $j=\min\{i\in[q-t-1]:\mathcal{S}_{i}=\{X\}\}$, and the following hold.
\begin{itemize}
\item[\rm(i)]If $\tau_t(\mathcal{F})\geq t+2$, then $|\mathcal{F}|\leq(\delta+\alpha b)\binom{n-t-1}{k-t-1}$.
\item[\rm(ii)]Suppose $n\geq t+6\cdot\max\{(t+2)^2, k(k-t)\}$. If $\tau_t(\mathcal{F})=t+1$ and $|\mathcal{F}|\geq\binom{n-t}{k-t}-\binom{n-k}{k-t}$, then $j\in\{1,2\}$. Moreover, either $\mathcal{F}\cong\mathcal{L}(n,k,t,s)$ for some $s\in\{k-t,k-t+1\}$, or $\mathcal{F}\cong\mathcal{K}(n,k,t,c)$ for some $c\in[2,k-t]$.
\end{itemize}
\end{lemma}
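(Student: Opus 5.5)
We first establish the existence of $j$ and $X$. Since $N=q-t$, the procedure never stopped before round $q-t$. By Lemma \ref{lemmafingerprintproperty-single} (i), $\mathcal{S}_{q-t}\subseteq\binom{[n]}{\le t}$. Because $\mathcal{S}_{q-t}$ is $t$-intersecting and nonempty, it consists of a single $t$-set $X$. Every member of $\mathcal{S}_{q-t-1}\setminus\mathcal{X}_{q-t-1}$ contains $X$, and each $\mathcal{S}_i$ is an antichain of minimal $t$-covers. Hence once some $\mathcal{S}_i$ equals $\{X\}$, all later ones do too, and we let $j$ be the least index in $[q-t-1]$ with $\mathcal{S}_j=\{X\}$. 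If no such index exists we take $j=q-t$; in that case one checks that $\mathcal{S}_{q-t-1}$ has members of size $t+1$ containing $X$, which leads to the same bounds. Note that $j\ge 1$, since $\mathcal{S}_0=\mathcal{M}(\mathcal{F})$ cannot equal $\{X\}$ when $\mathcal{F}$ is non-trivial. Lemma \ref{lemmafingerprintproperty-single} (ii) then gives
\[
\mathcal{F}\subseteq\bigcup_{i<j}\mathcal{F}[\mathcal{X}_i]\cup\mathcal{F}[X].
\]

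For (i), suppose $\tau_t(\mathcal{F})\ge t+2$. We split $\mathcal{F}$ into the part covered by $\bigcup_{i\le q-t-2}\mathcal{X}_i$ and the rest. By Lemma \ref{lemmafin-stru0} (i) with $m=t+2$, the first part has size at most $\delta\binom{n-t-1}{k-t-1}$. Every remaining member of $\mathcal{F}$ contains $X$ (those contained in sets of $\mathcal{X}_{q-t-1}$ also contain $X$). Since $X$ is not a $t$-cover of $\mathcal{F}$, some $F_0\in\mathcal{F}$ meets $X$ in fewer than $t$ elements. The remaining members contain $X$ and must meet $F_0\setminus X$. Applying Lemma \ref{lemmaind} (or Lemma \ref{lemmakey}) with $G=F_0$ and $\ell=k$ bounds this part by $b\binom{n-t-1}{k-t-1}\cdot\frac{b}{r}$ in a first cut. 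A cleaner route is to note that every remaining $F\supseteq X$ satisfies $|F\cap F_0|\ge t$ and that $\tau_t\ge t+2$ forces two further elements outside $X$. This gives at most $b^2\binom{n-t-2}{k-t-2}\le \alpha b\binom{n-t-1}{k-t-1}$, which yields (i).

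For (ii), $\tau_t(\mathcal{F})=t+1$, so $\mathcal{M}^*(\mathcal{F})$ consists of $(t+1)$-sets. First, if $j\ge 3$, then the sets $\mathcal{X}_0,\dots,\mathcal{X}_{j-1}$ avoid $\mathcal{F}[X]$ only partially. Using Lemma \ref{lemmafin-stru0} (ii) with $m=t+3$, the members not containing $X$ number $O\big((t+1)\binom{n-t-3}{k-t-3}\big)$. The members containing $X$ lie in the star but must meet every $F_0\not\supseteq X$; a short argument should show they need two extra elements beyond one $(t+1)$-cover. Comparing with $\binom{n-t}{k-t}-\binom{n-k}{k-t}\ge (k-t)\binom{n-t-1}{k-t-1}-\binom{k-t}{2}\binom{n-t-2}{k-t-2}$ (Lemma \ref{lemmabinom} (iii)) and invoking $n\ge t+6\max\{(t+2)^2,k(k-t)\}$ should give a contradiction, so $j\in\{1,2\}$.

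Still within (ii), the case $j=1$ means $\mathcal{S}_0\setminus\mathcal{X}_0$ has fingerprint $\{X\}$. The minimal covers of size less than $k$ all contain $X$, and the covers not containing $X$ are $k$-sets in $\mathcal{X}_0$. Together with maximality this should force the Hilton--Milner-type structure $\mathcal{H}$ or $\mathcal{K}(n,k,t,c)$. The case $j=2$ allows covers of size $k-1$ not containing $X$ and should lead to $\mathcal{L}(n,k,t,k-t)$ or $\mathcal{K}(n,k,t,c)$. Here one uses Lemma \ref{lemmamaximal} to describe $\mathcal{F}$ exactly by its minimal covers, and then matches the possible configurations of sets $F\not\supseteq X$ against the $(t+1)$-covers $X\cup\{y\}$. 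The main obstacle is this final classification together with the exclusion of $j\ge3$, which demands tight counting under the stated bound on $n$.
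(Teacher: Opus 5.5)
Your part (i) is the paper's argument. The peeled part is bounded by Lemma~\ref{lemmafin-stru0}~(i) with $m=t+2$. The part $\mathcal{F}[X]$ is bounded by Lemma~\ref{lemmakey} with $\mathcal{G}=\mathcal{F}$, $\ell=k$, $m=t+2$, giving $b^2\binom{n-t-2}{k-t-2}\le\alpha b\binom{n-t-1}{k-t-1}$. Your ``first cut'' via Lemma~\ref{lemmaind} gives only $b\binom{n-t-1}{k-t-1}$, not $b\cdot\frac{b}{r}\binom{n-t-1}{k-t-1}$, so drop it.

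The branch ``if no such index exists'' cannot occur, and it should be replaced by the actual reason $j\le q-t-1$. Since $N=q-t$, the family $\mathcal{S}_{q-t-1}\setminus\mathcal{X}_{q-t-1}$ is a nonempty set of members of size at most $t$ of a $t$-intersecting antichain. Hence it equals $\{X\}$, and then $\mathcal{S}_{q-t-1}=\{X\}$.

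Part (ii) has a genuine gap: you never identify how the index $j$ constrains $\mathcal{F}$. In the paper the mechanism runs as follows.
\begin{itemize}
\item The $(t+1)$-covers $\mathcal{S}^*=\mathcal{M}^*(\mathcal{F})$ are never removed or shrunk by the peeling. So $\mathcal{S}^*\subseteq\mathcal{S}_{j-1}\setminus\mathcal{X}_{j-1}$ and $\mathcal{S}^*=\{X\cup\{p\}:p\in P\}$ with $P=(\bigcup\mathcal{S}^*)\setminus X$.
\item This is also what makes Lemma~\ref{lemmafin-stru0}~(ii) applicable, and only for $m\ge q-j+1$. So your choice $m=t+3$ is licensed only when $j\ge k-t-2$.
\item By minimality there is $W_0\in\mathcal{S}_{j-1}[\overline{X}]$. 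It satisfies $|W_0\cap X|=t-1$, $P\subseteq W_0$ and $|W_0|\le q-j+1$. Hence $s^*:=|\mathcal{S}^*|\le b(j):=k-t-j+2$.
\item Members of $\mathcal{F}[\mathcal{S}_{j-1}[X]]\setminus\mathcal{F}[\mathcal{S}^*]$ must meet $W_0\setminus(X\cup P)$. This set has $b(j)-s^*$ elements $w$, and for each, $X\cup\{w\}$ is not a $t$-cover. So Lemma~\ref{lemmaind} bounds these members by $(b(j)-s^*)b\binom{n-t-2}{k-t-2}$.
\end{itemize}
The remaining peeled part is at most $(20+9t)\binom{n-t-2}{k-t-2}$. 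This gives
$|\mathcal{F}|\le\binom{n-t}{k-t}-\binom{n-t-s^*}{k-t}+(b(j)-s^*)b\binom{n-t-2}{k-t-2}+(20+9t)\binom{n-t-2}{k-t-2}$,
which is increasing in $s^*$. This one inequality excludes both $j\ge3$ and $s^*\le k-t-1$.

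Your substitute claim, that members containing $X$ ``need two extra elements beyond one $(t+1)$-cover'', is false for members of $\mathcal{F}[X\cup\{p\}]$. Without $s^*\le b(j)$, nothing stops $s^*\ge k-t$ when $j\ge3$. In that case $\mathcal{F}[\mathcal{S}^*]$ alone reaches $\binom{n-t}{k-t}-\binom{n-k}{k-t}$, and no counting contradiction exists.

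Your estimate $O\bigl((t+1)\binom{n-t-3}{k-t-3}\bigr)$ for members avoiding $X$ also fails when $j=q-t-1$. For example, in $\mathcal{L}(n,k,t,3)$ these members number $t\binom{n-t-3}{k-t-2}$.

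You also never exclude $j\in\{1,2\}$ with $s^*\le k-t-1$. For small $s^*$ with $j=1$ there are other maximal families, so the classification cannot be forced by structure alone. Your sketch of the classification is also inaccurate. The case $j=2$ yields only $\mathcal{L}(n,k,t,k-t)$. The case $j=1$, $s^*=k-t$ needs the argument that the set $Q$ of extra elements used by $\mathcal{F}[\overline{X}]$ satisfies $2\le|Q|\le k-t$.

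Finally, the comparison through Lemma~\ref{lemmabinom}~(iii) is too lossy in the stated range. It gives away $\binom{k-t}{2}\binom{n-t-2}{k-t-2}$. The available slack is only $\binom{n-k}{k-t-1}\approx\frac{n-t}{k-t}\binom{n-t-2}{k-t-2}$, which is smaller when $n$ is near $6k(k-t)$ and $k-t\gg\sqrt{k}$. The paper instead compares directly with $\binom{n-t}{k-t}-\binom{n-k+1}{k-t}$, whose deficit from the target is exactly $\binom{n-k}{k-t-1}$.
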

\begin{proof}
Since $N=q-t$ and $\mathcal{S}_{q-t}$ is a $t$-intersecting antichain, $\mathcal{S}_{q-t}=\{X\}$ for some $t$-subset $X$. Every member of $\mathcal{S}_{q-t-1}\setminus\mathcal{X}_{q-t-1}$ contains $X$, and hence $\mathcal{S}_{q-t-1}=\{X\}$ as well. So we may define $j$ to be the smallest index satisfying this, that is,
$$j:=\min\{i\in[q-t-1]:\mathcal{S}_{i}=\{X\}\}.$$	
To see $j>0$, just note that $\mathcal{F}$ is non-trivial, which implies that $\mathcal{S}_0$ does not contain any  $t$-subset. 

Since $\mathcal{S}_j=\{X\}$, we have $X\subseteq\cap(\mathcal{S}_{j-1}\setminus\mathcal{X}_{j-1})$. By the definition of $j$, the set $X$ does not lie in $\mathcal{S}_{j-1}$, and then the minimality yields  $\mathcal{X}_{j-1}[\overline{X}]\neq\emptyset$. 

(i)\;First, from $\tau_t(\mathcal{F})\geq t+2$, Lemma \ref{lemmakey} yields $$|\mathcal{F}[X]|\leq(k-t+1)^2\binom{n-t-2}{k-t-2}\leq\alpha b\binom{n-t-1}{k-t-1}.$$
By Lemma \ref{lemmafingerprintproperty-single} (ii) and the fact that $X\subseteq\cap(\mathcal{S}_{j-1}\setminus\mathcal{X}_{j-1})$, we obtain
\begin{equation*}
\mathcal{F}\subseteq\left(\bigcup_{i=0}^{j-1}\mathcal{F}[\mathcal{X}_i]\right)\cup\mathcal{F}[X],
\end{equation*}
and this together with Lemma \ref{lemmafin-stru0} (i)  gives $|\mathcal{F}|\leq(\delta+\alpha b)\binom{n-t-1}{k-t-1}$.

(ii)\;From $n\geq t+6\cdot\max\{(t+2)^2, k(k-t)\}$, we have $r\geq6\cdot\max\{(t+2)^2/(k-t), k\}$.

Since $\tau_t(\mathcal{F})=t+1$, the family $\mathcal{S}^*$ is nothing but $\mathcal{M}^*(\mathcal{F})$, that is, the set of $t$-covers of $\mathcal{F}$ with size $t+1$. Then by the maximality of $\mathcal{F}$, we obtain
\begin{equation}\label{equlemmafin-stru22}
\mathcal{F}[\mathcal{S}^*]=\left\{F\in\binom{[n]}{k}:S\subseteq F\;\mbox{for some}\;S\in\mathcal{S}^*\right\}.
\end{equation}
Note that $\mathcal{S}_{q-t-1}=\{X\}$, the sets removed in the procedure are those from $\cup_{j\leq q-t-2}\mathcal{X}_j$, where each has size at least $t+2$. So no member of $\mathcal{S}^*$ is removed. Note also that none of them is replaced by a proper subset, since they are of size $t+1$ and each $\mathcal{S}_i$ ($0\leq i\leq j-1$) is a $t$-intersecting antichain. Hence $$\mathcal{S}^*\subseteq\mathcal{S}_{j-1}\setminus\mathcal{X}_{j-1}.$$
In particular, this gives $X\subseteq\cap\mathcal{S}^*$. To ease notation, write $\mathcal{S}=\mathcal{S}_{j-1}$ and $b(j):=q-t-j+2$ for short. We cover $\mathcal{F}$ by the following subfamilies.
\begin{equation}\label{equlemmafin-stru2decomp}
\mathcal{F}\subseteq\left(\bigcup_{i=0}^{j-2}\mathcal{F}[\mathcal{X}_i]\right)\cup\mathcal{F}[\mathcal{S}^*]\cup(\mathcal{F}[\mathcal{S}[X]]\setminus\mathcal{F}[\mathcal{S}^*])\cup\mathcal{F}[\mathcal{X}_{j-1}[\overline{X}]].
\end{equation}
Write $\mathcal{F}'=\left(\bigcup_{i=0}^{j-2}\mathcal{F}[\mathcal{X}_i]\right)\cup(\mathcal{F}[\mathcal{X}_{j-1}[\overline{X}]])$ and $\mathcal{F}''=\mathcal{F}[\mathcal{S}[X]]\setminus\mathcal{F}[\mathcal{S}^*]$ for short. 

 By applying Lemma \ref{lemmafin-stru0} (ii) to $m=q-j+1$, and noting that $\tau_t(\mathcal{F})=t+1$, $r\geq eq/\varepsilon\geq2ek$ and $\varepsilon\leq0.5$, we obtain
\begin{align}
	|\mathcal{F}'|&\leq\left|\bigcup_{i=0}^{j-1}\mathcal{F}[\mathcal{X}_i]\right|\leq\frac{(q-j-t+2)^{q-j-t+1}(t+1)}{1-\varepsilon}\binom{n-(q-j+1)}{k-(q-j+1)}\nonumber\\
	&=\frac{b(j)^{b(j)-1}(t+1)}{1-\varepsilon}\binom{n-t-b(j)+1}{k-t-b(j)+1}\leq\frac{(t+1)r^2}{1-\varepsilon}\cdot\left(\frac{b(j)}{r}\right)^{b(j)-1}\binom{n-t-2}{k-t-2}\nonumber\\
	&\leq\frac{64(t+1)}{(1-\varepsilon)r}\binom{n-t-2}{k-t-2}<20\binom{n-t-2}{k-t-2}\;\;\mbox{for}\;\;1\leq j\leq q-t-2,\label{equlemmafin-stru23}
\end{align}
where in the last step we used $r=ek/\varepsilon\geq6k$ to derive 
\begin{equation}\label{equlemmafin-stru27}
\frac{64(t+1)}{(1-\varepsilon)r}\leq64(t+1)/(r-ek)<64/(6-e)<20.
\end{equation}
Suppose $j=q-t-1$, and fix an $S\in\mathcal{S}^*$.  Then each member of $\mathcal{X}_{q-t-2}[\overline{X}]$ contains all but some $x\in X$ of $S$. This together with Lemma \ref{lemmafingerprintproperty-single} (iii) yields
\begin{align*}
|\mathcal{X}_{q-t-2}[\overline{X}]|\leq\sum_{x\in X}|\mathcal{X}_{q-t-2}[S\setminus\{x\}]|\leq t\cdot3^2=9t.
\end{align*}
Therefore, by Lemma \ref{lemmafin-stru0} (ii) and (\ref{equlemmafin-stru27}),
\begin{align}
|\mathcal{F}'|&\leq\left|\bigcup_{i=0}^{q-t-3}\mathcal{F}[\mathcal{X}_i]\right|+|\mathcal{F}[\mathcal{X}_{q-t-2}[\overline{X}]]|\leq\left(\frac{64(t+1)}{(1-\varepsilon)r}+9t\right)\binom{n-t-2}{k-t-2}\nonumber\\
&<(20+9t)\binom{n-t-2}{k-t-2}\;\;\mbox{for}\;\;j=q-t-1.\label{equlemmafin-stru24}
\end{align}

We next consider  $\mathcal{F}''$. Set $P=(\cup\mathcal{S}^*)\setminus X$ and put $s^*=|\mathcal{S}^*|$, then clearly $\mathcal{S}^*=\{X\cup\{p\}:p\in P\}$. By the minimality of $\mathcal{S}$, we have $\mathcal{S}[\overline{X}]\neq\emptyset$. Fix a  $W_0$ from the part. Then $|W_0\cap X|=t-1$ and $P\subseteq W_0$ as it $t$-intersects every member of $\mathcal{S}^*$. It follows that $(t-1)+s^*\leq q-(j-1)$, which gives $s^*\leq q-t-j+2=b(j)$. Every $G\in\mathcal{F}''$ intersects $X\cup P$ exactly in $X$, and then must contain some element of $W_0$. Hence 
$$\mathcal{F}''\subseteq\bigcup_{w\in W_0\setminus(X\cup P)}\mathcal{F}[X\cup\{w\}].$$
Note that $|W_0\setminus(X\cup P)|=(q-j+1)-(t-1)-s^*=b(j)-s^*$. By the same argument, every $X\cup\{w\}$ as displayed above does not lie in $\mathcal{S}^*$, and hence Lemma \ref{lemmaind} gives
$$|\mathcal{F}''|\leq (b(j)-s^*)(k-t+1)\binom{n-t-2}{k-t-2}=(b(j)-s^*)b\binom{n-t-2}{k-t-2}.$$
By combining this with (\ref{equlemmafin-stru22}) and  (\ref{equlemmafin-stru2decomp}), we derive that
\begin{align}
|\mathcal{F}[\mathcal{S}^*]|+|\mathcal{F}''|\leq\left(\binom{n-t}{k-t}-\binom{n-t-s^*}{k-t}\right)+(b(j)-s^*)b\binom{n-t-2}{k-t-2}=:g(j,s^*).\label{equlemmafin-stru28}
\end{align}
By $r\geq2ek$, it is easy to check that the function $g(j,s)$ is increasing as $s\in\{1,2,\ldots,b(j)\}$ increases.

In what follows, suppose $|\mathcal{F}|\geq\binom{n-t}{k-t}-\binom{n-k}{k-t}$. We are ready to prove the following technical claim.
\begin{claim}\label{claimlemmafin-stru22}
	Either $(j,s^*)=(2,k-t)$ or $j=1$ and $s^*\in\{k-t,k-t+1\}$. 
\end{claim}
\begin{proof}
For simplicity, write 
\begin{equation*}
	D:=\frac{\left(\binom{n-t}{k-t}-\binom{n-k}{k-t}\right)-|\mathcal{F}|}{\binom{n-t-2}{k-t-2}}.
\end{equation*} 

Assume that $3\leq j\leq q-t-1$. Now necessarily $k\geq t+4$ and $b(j)\leq k-t-1$, and therefore 
$$g(j,s^*)\leq g(j,b(j))=\binom{n-t}{k-t}-\binom{n-t-b(j)}{k-t}\leq\binom{n-t}{k-t}-\binom{n-k+1}{k-t}.$$
It follows that
\begin{align*}
	|\mathcal{F}|\leq&|\mathcal{F}[\mathcal{S}^*]|+|\mathcal{F}''|+|\mathcal{F}'|\leq g(j,s^*)+|\mathcal{F}'|\nonumber\\
	<&\binom{n-t}{k-t}-\binom{n-k+1}{k-t}+\left(20+9t\right)\binom{n-t-2}{k-t-2},
\end{align*}
and hence $D>\binom{n-k}{k-t-1}-\left(20+9t\right)\binom{n-t-2}{k-t-2}$. By Lemma \ref{lemmabinom} (ii), we have
\begin{align}\label{equlemmafin-stru26}
	\binom{n-k}{k-t-1}\bigg/\binom{n-t-2}{k-t-2}&\geq\left(1-\frac{(k-t-1)^2}{n-t-1}\right)\cdot\frac{n-t-1}{k-t-1}=\frac{n-t-1}{k-t-1}-(k-t-1).
\end{align}
From $n\geq t+6\cdot\max\{(t+2)^2, k(k-t)\}\geq t+1.5(t+2)^2+4.5k(k-t)$,
\begin{align*}
	(k-t-1)D&>((n-t-1)-(k-t-1)^2)-(k-t-1)(20+9t)\\
	&\geq1.5(t+2)^2+4.5k(k-t)-1-(k-t-1)(k+8t+19)\\
	&=\frac72\left((k-t)-\frac{9t+36}{14}\right)^2
	+\frac{3t^2+192t+48}{56}>0.
\end{align*}
Therefore, by our assumption that $|\mathcal{F}|\geq\binom{n-t}{k-t}-\binom{n-k}{k-t}$, we obtain $j\in\{1,2\}$.

To prove the claim, we next assume to the contrary that $j\in\{1,2\}$ and $s^*\leq k-t-1$. When $j\leq q-t-2$, a similar argument using (\ref{equlemmafin-stru23}) and (\ref{equlemmafin-stru28}) yields
$$|\mathcal{F}|\leq g(j,s^*)+|\mathcal{F}'|\leq\binom{n-t}{k-t}-\binom{n-k+1}{k-t}+\left(2b+20\right)\binom{n-t-2}{k-t-2}.$$
From (\ref{equlemmafin-stru26}), $n-t\geq6k(k-t)$, and $k\geq t+3$ (as $1\leq j\leq q-t-2$), we obtain
\begin{align*}
	(k-t-1)D&>((n-t-1)-(k-t-1)^2)-(20+2b)(k-t-1)\\
	&\geq6k(k-t)-1-(k-t-1)(3(k-t)+21)\\
	&=3(k-t-2)^2+6(k-t)(t-1)+8>0,
\end{align*}
a contradiction. Hence $j=q-t-1$, and so either  $(k-t,j)=(2,1)$ or $(k-t,j)=(3,2)$. If $(k-t,j)=(3,2)$, then $b=k-t+1=4$, and by (\ref{equlemmafin-stru24}),
\begin{align*}
	|\mathcal{F}|\leq g(2,2)+|\mathcal{F}'|\leq\binom{n-t}{3}-\binom{n-k+1}{3}+\left(24+9t\right)\binom{n-t-2}{1}.
\end{align*}
Hence
$$D\geq\binom{n-(t+3)}{2}/(n-t-2)-(9t+24)>0,$$
a contradiction again. To see the second step, just note that $n-t\geq6k(k-t)=18t+54$. If $(k-t,j)=(2,1)$, then $b(j)=k-t-j+2=3$, and $1\leq s^*\leq k-t-1$ gives $s^*=1$. Moreover, (\ref{equlemmafin-stru24}) reduces to $|\mathcal{F}'|\leq9t$, and (\ref{equlemmafin-stru28}) gives $|\mathcal{F}[\mathcal{S}^*]|+|\mathcal{F}''|\leq\left(\binom{n-t}{2}-\binom{n-t-1}{2}\right)+2b$. Thus $$D\geq\binom{n-t-1}{2}-\binom{n-(t+2)}{2}-(9t+2b)=n-(10t+8)>0,$$
a contradiction again. So Claim \ref{claimlemmafin-stru22} is true.
\end{proof}

Suppose $j=1$. Now $s^*\in\{k-t,k-t+1\}$, and $$\mathcal{X}_{j-1}=\mathcal{X}_0=\mathcal{M}(\mathcal{F})\cap\mathcal{F}\;\;\mbox{and}\;\;\mathcal{S}_0\setminus\mathcal{X}_0=\mathcal{M}(\mathcal{F})\setminus\mathcal{F}.$$
To ease notation, write $\mathcal{M}=\mathcal{M}(\mathcal{F})$. Since $\mathcal{M}$ is a fingerprint of $\mathcal{F}$, a member of $\mathcal{F}$ contains $X$ if and only if it contains some set from $\mathcal{M}[X]$. Moreover, since every $S\in\mathcal{S}_{j-1}\setminus\mathcal{X}_{j-1}$ contains $X$, it follows that  $$\mathcal{F}[\overline{X}]=\mathcal{X}_0[\overline{X}].$$
Note that for all $F\in\mathcal{X}_0[\overline{X}]$, there exists $x\in X$ such that $F\cap(X\cup P)=(X\cup P)\setminus\{x\}$. 

Assume $s^*=k-t+1$. Then $\mathcal{F}[\overline{X}]\subseteq\{(X\setminus\{x\})\cup P:x\in X\}$, which implies that $\mathcal{F}[X]=\mathcal{F}[\mathcal{S}^*]$. Indeed, fix $W\in\mathcal{F}[\overline{X}]$. For each $F\in\mathcal{F}[X]$, it holds that $|F\cap W\cap X|=t-1$ and hence $F\cap P\neq\emptyset$, namely, $F$ contains some $S\in\mathcal{S}^*$. Thus $\mathcal{F}[X]=\mathcal{F}[\mathcal{S}^*]$. Then we deduce from the maximality of $\mathcal{F}$ that
\begin{equation*}
\mathcal{F}=\left\{F\in\binom{[n]}{k}:S\subseteq F\;\mbox{for some}\;S\in\mathcal{S}^*\right\}\cup\{(X\setminus\{x\})\cup P:x\in X\}.
\end{equation*}
Thus $\mathcal{F}\cong\mathcal{H}(n,k,t)$. 

Assume $s^*=k-t$, then $$\mathcal{F}[\overline{X}]\subseteq\{(X\setminus\{x\})\cup P\cup\{y\}:x\in X,\;y\in[n]\setminus(X\cup P)\}.$$
Denote by $Q$ the set of elements of  $[n]\setminus(X\cup P)$ that are contained in some $F\in\mathcal{F}[\overline{X}]$. Then $Q$ is not a singleton. To see this, pick $w\in Q$. Since $w\notin P$, the set $X\cup\{w\}$ is not a $t$-cover of $\mathcal{F}$, and consequently $|F\cap(X\cup\{w\})|<t$ for some $F_0\in\mathcal{F}[\overline{X}]$. It follows that $F_0$ contains an element in $Q$ that is distinct from $w$, and thus $|Q|\geq2$. Pick a  $W=(X\setminus\{x\})\cup P\cup\{y\}\in\mathcal{F}[\overline{X}]$. By the definition of $\mathcal{M}$, the set $(X\setminus\{x\})\cup P$ is not a $t$-cover of $\mathcal{F}$. Then $\mathcal{F}[X]\nsubseteq\mathcal{F}[\mathcal{S}^*]$. Hence, there exists $G\in\mathcal{F}[X]$ with $G\cap P=\emptyset$, implying that $Q\subseteq G$. Thus $|Q|\leq k-t$. Again, by the maximality, we get $\mathcal{F}\cong\mathcal{K}(n,k,t,|Q|)$.

Finally, let us suppose $(j,s^*)=(2,k-t)$. Now
$$\mathcal{X}_1[\overline{X}]\subseteq\{(X\setminus\{x\})\cup P:x\in X\}.$$
Recall that $\mathcal{S}^*=\{X\cup\{p\}:p\in P\}$, and both $\mathcal{S}^*$ and $\mathcal{X}_1[\overline{X}]$ are subfamilies of $\mathcal{S}_0=\mathcal{M}$. Further, every  $W\in\mathcal{M}[X]$ must intersect $P$, and hence contains some member of $\mathcal{S}^*$. Then $\mathcal{M}[X]=\mathcal{S}^*$ as $\mathcal{M}$ is an antichain. Let $W\in\mathcal{M}[\overline{X}]$, then $|W\cap X|=t-1$ and $P\subseteq W$. Therefore, from Lemma \ref{lemmamaximal},
\begin{align*}
\mathcal{F}\subseteq&\left\{F\in\binom{[n]}{k}:S\subseteq F\;\mbox{for some}\;S\in\mathcal{S}^*\right\}\\
&\cup\{(X\setminus\{x\})\cup P\cup\{y\}:x\in X,\;y\in[n]\setminus(X\cup P)\}\cong\mathcal{L}(n,k,t,k-t).
\end{align*}
Hence $\mathcal{F}\cong\mathcal{L}(n,k,t,k-t)$ as it is maximal.
\end{proof}
\noindent{\bf Proof of Theorem \ref{thmlarge}}.\;Our goal is to prove that, if $\mathcal{F}$ is isomorphic to none of the families given in (i) and (ii), then it has size smaller than $\binom{n-t}{k-t}-\binom{n-k}{k-t}$.

By our assumption on $n$, we have
\begin{equation}\label{equthmlarge1}
r:=(n-t)/(k-t)\geq6\cdot\max\{(t+2)^2/(k-t), k\}.
\end{equation}
We may adapt the notation in Assumption \ref{assumption}. More precisely, perform the algorithm to
$\mathcal{F}$, the uniformity $q:=k$ and the fingerprint 
$\mathcal{S}_0:=\mathcal{M}(\mathcal{F})$, and let $N$ be the number of
rounds and $(\mathcal{S}_i,\mathcal{X}_i)$, $i\leq N$, be the output
families. 

For simplicity, write $C:=(k-t)-\frac{k-t-1}{n-t-1}\binom{k-t}{2}$.
A useful estimate obtained by applying Lemma \ref{lemmabinom} (iii) to  $m=n-t$ and $u=v=k-t$ is that
\begin{align}
	\binom{n-t}{k-t}-\binom{n-k}{k-t}&\geq\left((k-t)-\frac{k-t-1}{n-t-1}\binom{k-t}{2}\right)\binom{n-t-1}{k-t-1}=C\binom{n-t-1}{k-t-1}.\label{equbinom}
\end{align}

If $N\leq q-t-2$, then Lemma \ref{lemmafin-stru0} (i) gives $|\mathcal{F}|\leq\delta\binom{n-t-1}{k-t-1}$. When $N=q-t-1$, Lemma \ref{lemmafin-stru1-single} (ii) gives that either 
$\mathcal{F}\cong\mathcal{A}(n,k,t)$, or $k\geq t+3$ and  $|\mathcal{F}|\leq\max\{M_1,M_2\}\cdot\binom{n-t-1}{k-t-1}$, where $M_1:=\delta_{q-t-4}+\alpha(t+2)+32(t+2)^2/r^2$ and $M_2:=2+t\alpha+\frac{64(t+1)}{(1-\varepsilon)r^2}$.

Suppose $N=q-t$. By Lemma \ref{lemmafin-stru2}, if $\tau_t(\mathcal{F})\geq t+2$, then $|\mathcal{F}|\leq(\delta+\alpha b)\binom{n-t-1}{k-t-1}$. If $\tau_t(\mathcal{F})=t+1$ and $|\mathcal{F}|\geq\binom{n-t}{k-t}-\binom{n-k}{k-t}$, then either $\mathcal{F}\cong\mathcal{L}(n,k,t,s)$ for some $s\in\{k-t,k-t+1\}$, or $\mathcal{F}\cong\mathcal{K}(n,k,t,c)$ for some $c\in[2,k-t]$.

Now we combine all obtained inequalities required.  It suffices to prove
\begin{itemize}
\item[\rm(a)]$C>\delta+\alpha b$ for $k\geq t+2$, and
\item[\rm(b)]$C>\max\{M_1,M_2\}$ for $k\geq t+3$.
\end{itemize}
Recall our assumption that $k\geq t+4$. We postpone the tedious computation to Lemma \ref{lemmathmlarge}. 
 {\hfill$\square$}\vspace{1em}

\noindent{\bf Proof of Theorem \ref{thmdiv}.}\;To begin with, we need the following monotonicity property of the parameter $\gamma_t$, that is, $$\gamma_t(\mathcal{H})\leq\gamma_t(\mathcal{H}')\;\;\mbox{for}\;\;\mathcal{H}\subseteq\mathcal{H}'\subseteq\binom{[n]}{k}.$$
Therefore, we may suppose that $\mathcal{F}$ is maximal. We may also suppose that $|\mathcal{F}|\geq\binom{n-t}{k-t}-\binom{n-k}{k-t}$, as the family $\mathcal{L}(n,k,t,k-t)$ has size larger than $\binom{n-t}{k-t}-\binom{n-k}{k-t}$ and $t$-diversity $t(n-k)$. Then by Theorem \ref{thmlarge}, either $\mathcal{F}\cong\mathcal{L}(n,k,t,s)$ for some $s\in\{2,k-t,k-t+1\}$, or $\mathcal{F}\cong\mathcal{K}(n,k,t,c)$ for some $c\in\{2,3,\ldots,k-t\}$. Note that $\mathcal{A}(n,k,t)=\mathcal{L}(n,k,t,2)$ and $\mathcal{H}(n,k,t)=\mathcal{L}(n,k,t,k-t+1)=\mathcal{K}(n,k,t,1)$. Next, we may suppose $\mathcal{F}\ncong\mathcal{A}(n,k,t)$. By Lemma \ref{lemmacountingfamily}, it holds that
\begin{align*}
	|\mathcal{K}(n,k,t,1)|&>|\mathcal{K}(n,k,t,2)|>\cdots>|\mathcal{K}(n,k,t,k-t-2)|\\
	&>|\mathcal{L}(n,k,t,k-t)|>|\mathcal{K}(n,k,t,k-t-1)|>|\mathcal{K}(n,k,t,k-t)|.
\end{align*}
Then the theorem follows from  $\gamma_t(\mathcal{K}(n,k,t,j))=jt$ and $\gamma_t(\mathcal{L}(n,k,t,k-t))=t(n-k)$. {\hfill$\square$}
\begin{lemma}\label{lemmathmlarge}
Let $k\geq t+4$ and $n\geq t+6\max\{(t+2)^2,k(k-t)\}$. Put $C=(k-t)-\frac{k-t-1}{n-t-1}\binom{k-t}{2}$, and let $b,\alpha,\delta,\delta_{q-t-4}$ be as defined in Assumption \ref{assumption}. Then $C>\max\{\delta+\alpha b,M_1,M_2\}$, where $M_1:=\delta_{q-t-4}+\alpha(t+2)+32(t+2)^2/r^2$ and $M_2=2+t\alpha+\frac{64(t+1)}{(1-\varepsilon)r^2}$.
\end{lemma}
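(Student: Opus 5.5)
The argument is a direct numerical comparison. Everything is written in terms of $r=(n-t)/(k-t)$, which by (\ref{equthmlarge1}) satisfies $r\ge 6k$ and $r(k-t)\ge 6(t+2)^2$. We also use $\varepsilon=ek/r\le e/6<0.46$ and $\alpha=b/r$ with $b=k-t+1$.

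\textbf{Lower bound on $C$.} Since $n-t-1\ge r(k-t)-1$, we have
$$\frac{k-t-1}{n-t-1}\binom{k-t}{2}\le \frac{(k-t)^2}{2r}\le \frac{k-t}{12},$$
so $C\ge \tfrac{11}{12}(k-t)\ge 11/3$ when $k-t\ge4$.

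\textbf{Bounding $M_2$.} We have $t\alpha=t(k-t+1)/r$. Using $r\ge 6(t+2)^2/(k-t)$ and $r\ge 6k$, this is at most $(k-t+1)/6$ times a small factor; more precisely $t(k-t+1)\le k(k-t)\cdot\frac{t}{k}\cdot\frac{k-t+1}{k-t}$, so $t\alpha\le \tfrac{5}{4}\cdot\tfrac16(k-t)$. The remaining term $\frac{64(t+1)}{(1-\varepsilon)r^2}$ is tiny, because $r^2\ge 36k(t+2)^2/(k-t)\cdot(k-t)/k$. Together this gives
$$M_2\le 2+0.21(k-t)+o(1)<\tfrac{11}{12}(k-t),$$
which holds once $k-t\ge4$. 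This is exactly where the hypothesis $k\ge t+4$ enters.

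\textbf{Bounding $\delta$ and $\alpha b$.} The term $\alpha b=b^2/r\le (k-t+1)^2/(6k)$ is at most about $(k-t)/6\cdot(1+1/(k-t))^2$. For $\delta=\delta_{k-t-2}$ with $m=t+2$:
$$\delta=\varphi(2,t)/r+\frac{rT_3}{1-\varepsilon},\qquad T_3=\binom{t+3}{3}\frac{4^3}{r^3}.$$
\begin{itemize}
\item Expanding $\varphi(2,t)=9+6t\cdot3+\binom t2\cdot 1$, the first term is at most $(t^2/2+18t+9)/r$. Since $r\ge 6(t+2)^2/(k-t)$, this is at most roughly $(k-t)/12+O((k-t)/t)$.
\item The second term is $\le \frac{64(t+3)^3}{6(1-\varepsilon)r^2}$. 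Using $r^2\ge 36(t+2)^4/(k-t)^2$ together with $r\ge 6k$ (so that one factor of $r$ is at least $6k\ge 6(t+4)$), it is a small multiple of $(k-t)$, roughly $0.55(k-t)\cdot(t+3)^3/((t+2)^3)\cdot$(const) divided by $6$.
\end{itemize}

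\textbf{Bounding $M_1$.} The quantity $\delta_{q-t-4}$ (with $m=t+4$) is even smaller, since $T_5\le\varepsilon^2 T_3$ by Lemma \ref{lemmarqt}(ii). The other terms are $\alpha(t+2)=(k-t+1)(t+2)/r$ and $32(t+2)^2/r^2$, both handled in the same way as above.

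\textbf{Main obstacle.} The hard part is making the constants close with the crude factor $6$, uniformly over all $t\ge2$ and $k-t\ge4$. For each quantity I would bound it by $c_1(k-t)+c_2$ with explicit $c_1+c_2/4<11/12$. For the first term of $\delta$, I would split $\varphi(2,t)/r$ as $\frac{t^2/2}{r}+\frac{18t+9}{r}$ and bound these by $\frac{(k-t)}{12}$ and $\frac{18t+9}{6k}\le 3$ respectively. The constant $3$ may threaten the inequality against $11/3$ when $k-t=4$. If it does, I would instead use the exact $C\ge(k-t)(1-(k-t)/(2r))$ and the sharper $r\ge6k\ge 6(t+4)$ to get $(18t+9)/r<3t/(t+4)$, and finish with a direct check of the small case $k-t=4$.
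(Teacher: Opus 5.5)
\textbf{Verdict: genuine gap.} Your overall plan matches the paper's: compare $C$ with each of $\delta+\alpha b$, $M_1$, $M_2$ using only $r\ge 6k$ and $r(k-t)\ge 6(t+2)^2$. Your bound $C\ge\frac{11}{12}(k-t)$ is correct; the paper uses $0.9(k-t)$. Your $M_2$ and $M_1$ estimates are sound in substance, with two caveats. The last term of $M_2$ is a constant of size about $0.3$, not $o(1)$. For $M_1$ you must also control the part $\varphi(4,t)/r^3$ of $\delta_{q-t-4}$; for example $\varphi(4,t)\le r^4T_4$ by Lemma~\ref{lemmarqt}(i), so $\delta_{q-t-4}\le 2rT_4$, as in the paper.

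The gap is the decisive inequality $C>\delta+\alpha b$, which you do not prove.
\begin{itemize}
\item First, $\varphi(2,t)=\binom{t}{2}+12t+9$, not $\binom t2+18t+9$: the $j=1$ term is $t\binom{2}{1}^2\cdot 3$.
\item More seriously, your bookkeeping (a separate uniform bound $c_1(k-t)+c_2$ for each term) cannot close at $d:=k-t=4$. Your own constants $\frac{d}{12}$, $3$ and $\frac{(d+1)^2}{6d}$ already sum to $\frac13+3+\frac{25}{24}>\frac{11}{3}$, before $rT_3/(1-\varepsilon)$ is even added.
\item With the correct $12t+9$ the constant drops below $2$. But $rT_3/(1-\varepsilon)$ genuinely reaches about $1.3$ (at $t=3$, $k=7$, $n=171$, where $r=42$ and $\varepsilon=e/6$), so the sum of separate worst cases still exceeds $\frac{11}{3}$.
\item Your estimate of that term via $r^2\ge 6k\cdot 6(t+2)^2/(k-t)$ gives only about $0.7(k-t)$ at $t=2$, which is not ``a small multiple''.
\end{itemize}
The inequality itself has room: at that point $\delta+\alpha b\approx 3.1$ while $C\approx 3.9$. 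It holds only because the individual terms are large in different regimes of $(t,k)$. Your fallback does not fix this. Indeed, $(18t+9)/(6(t+4))=(3t+\frac32)/(t+4)$ is not below $3t/(t+4)$ and tends to $3$. Also, ``a direct check of $k-t=4$'' is not a finite check, since $t$ is unbounded.

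What is missing is the joint use of the two lower bounds on $r$. Write $s=t+2$ and $d=k-t$. The paper multiplies through by $r$ and keeps $\varphi(2,t)=\frac12(s^2+19s-24)$ and $b^2=(d+1)^2$ intact. It applies $r\ge 6s^2/d$ only to $\frac{64\binom{t+3}{3}}{(1-\varepsilon)r}\le\frac{64s^3}{3r}\le\frac{32}{9}sd<3.6sd$. It then pays for everything with the convex combination $0.9rd\ge 5.4\max\{s^2,(s+d-2)d\}\ge 0.9s^2+4.5(s+d-2)d$. Then $Cr>(\delta+\alpha b)r$ reduces to
\[
3.5d^2+0.9sd+0.4s^2-11d-9.5s+11=(d-4)(0.9s+3.5d+3)+0.1\bigl((2s-15)^2+s+5\bigr)>0,
\]
which holds for all $s\ge4$, $d\ge4$. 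Here the $s^2$-part of $\varphi(2,t)$ is charged to the $(t+2)^2$ constraint, and the terms $b^2$ and $sd$ to the $k(k-t)$ constraint. This is exactly the coupling that separate worst-case bounds throw away.
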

\begin{proof}
Recall from Assumption \ref{assumption} that $r=(n-t)/(k-t)\geq6\cdot\max\{(t+2)^2/(k-t), k\}$, and that $b=k-t+1$, $\alpha=b/r$ and $\varepsilon=ek/r\leq0.5$. Also note that $C\geq(k-t)-\binom{k-t}{2}/r>0.9(k-t)$. Let us write $s=t+2$ and $d=k-t$ for short. So our assumption on $n$ yields $r\geq6\cdot\max\{s^2/d,k\}$, and hence
\begin{equation}\label{equrd}
rd\geq\max\{6s^2,6kd\}=\max\{6s^2,6(s+d-2)d\}.
\end{equation}

First, we verify $C\geq\delta+\alpha b$. We have 
\begin{align*}
	\delta r&=\varphi(2,t)+r^2T_3/(1-\varepsilon)=\left(\binom{t}{2}+12t+9\right)+\frac{4^3\binom{t+3}{3}}{(1-\varepsilon)r}\\
	&\leq0.5(t^2+23t+18)+\frac{64(t+2)^3}{3r}=0.5(s^2+19s-24)+\frac{64(t+2)^3}{3r}\\
	&<0.5(s^2+19s-24)+\frac{32}{9}sd<0.5(s^2+19s-24)+3.6sd.
\end{align*}
We will prove that $Cr>(\delta+\alpha b)\cdot r$, or further, using $Cr>0.9(n-t)=0.9rd$, that
\begin{equation}\label{equthmlarge2}
D:=0.9rd-(0.5(s^2+19s-24)+3.6sd+(d+1)^2)>0.
\end{equation}
By (\ref{equrd}), we derive that
\begin{align*}
0.9rd\geq5.4\cdot\max\{s^2,(s+d-2)d\}\geq0.9s^2+4.5(s+d-2)d
\end{align*}
Then from $d\geq4$, we get
\begin{align*}
D&\geq(0.9s^2+4.5(s+d-2)d)-(0.5(s^2+19s-24)+3.6sd+(d+1)^2)\\
&=3.5d^2+0.9sd+0.4s^2-11d-9.5s+11\\
&=(d-4)(0.9s+3.5d+3)+0.1((2s-15)^2+s+5)>0.
\end{align*}
So (\ref{equthmlarge2}) is true, and thus $C>\delta+\alpha b$.

It remains to verify $C\geq\max\{M_1,M_2\}$. By Lemma \ref{lemmarqt}, we obtain
\begin{align*}
	\delta_{q-t-4}&=\varphi(4,t)/r^3+rT_5/(1-\varepsilon)\leq rT_4+rT_4\cdot\varepsilon/(1-\varepsilon)\leq2rT_4=2\cdot5^4\binom{t+4}{4}/r^3.
\end{align*}
Hence 
$r^3d=(rd)r^2\geq(6s^2)(6k)^2=216s^2k^2$. On the other hand, $\binom{t+4}{4}=\binom{s+2}{4}<s^2k^2/24$ as $k\geq s+2$. It follows that $\delta_{q-t-4}/d<1250/(24\cdot216)<0.25$. Hence
\begin{align*}
	M_1/(k-t)=\delta_{q-t-4}/d+\frac{(d+1)s}{rd}+\frac{32s^2}{r^2d}<0.25+\frac{2s}{r}+\frac{32}{36k}<0.25+\frac{1}{3}+\frac{8}{45}<0.9.
\end{align*}
Thus $C/(k-t)>0.9>M_1/(k-t)$, that is, $C>M_1$. 

For $M_2$, we have
\begin{align*}
	M_2=2+t\alpha+\frac{64(t+1)}{(1-\varepsilon)r^2}\leq2+(s-2)(d+1)/r+128(s-1)/r^2.
\end{align*}
Since $d\geq4$, $k\geq t+4$ and $r\geq6k$, it follows that
\begin{align*}
M_2/d&\leq0.5+1.25(s-2)/r+32(s-1)/r^2<0.5+5/24+8/(9k)<0.9.
\end{align*}
Thus $C>0.9d>M_2$. This finishes the proof. 
\end{proof}
\section{Spread approximation for a large family}\label{secspreadapproximation}
In this section, we prove the removal theorem (Theorem \ref{thmremoval}). The direct peeling procedure argument developed in
Section \ref{secfin-stru} applies when $n$ is at least quadratic in $k$. To
handle the complementary range, we will treat a family of sets of size $O(t\log k)$, which is much smaller than $k$ in that case, and then apply the
peeling procedure to $\mathcal S$. This reduction is based on the spread approximation method. More precisely, Theorem \ref{thmstrongsp} states that, given a family $\mathcal{F}$, we iteratively select small sets with relatively dense links so that, apart from a small error term, every member of $\mathcal{F}$ contains some $S\in\mathcal{S}$. Then we use the spread lemma (Theorem \ref{spreadlemma}) to show that if $\mathcal{F}$ is $t$-intersecting, then so is $\mathcal{S}$. The \emph{Spread Lemma} is a variant, due to Tao \cite{Tao}, of a key ingredient in the breakthrough work of Alweiss, Lovett, Wu and Zhang \cite{sunflower} on the Erd\H{o}s--Rado sunflower conjecture. It was subsequently sharpened by Stoeckl \cite{Stoeckl}. 

Let us recall that two families $\mathcal{A}$ and $\mathcal{B}$ are said to be \emph{cross-intersecting} if $A\cap B\neq\emptyset$ for all $A\in\mathcal{A}$ and $B\in\mathcal{B}$. For a real $p\in[0,1]$, a random subset $W$ of $[n]$ is called  \emph{$p$-random} if each element is included in $W$ independently with probability $p$. 
\begin{theorem}[\cite{Tao,Stoeckl}]\label{spreadlemma}
If $\mathcal{F}\subseteq\binom{[n]}{\leq k}$ is $r$-spread and $W$ is an $(m\delta)$-random subset of $[n]$, then 
$${\rm Pr}[F\subseteq W\;\mbox{for some}\; F\in\mathcal{F}]\geq1-\left(\frac{2}{\log_2(r\delta)}\right)^mk.$$
\end{theorem}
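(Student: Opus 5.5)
We will prove this by the standard iterative ``random residual'' argument of Alweiss--Lovett--Wu--Zhang, in the streamlined form of Tao and Stoeckl. Write the $(m\delta)$-random set $W$ as $W=W_1\cup\cdots\cup W_m$, where the $W_i$ are independent $\delta$-random subsets of $[n]$. This coupling is exact in the direction we need: the union is $p'$-random with $p'=1-(1-\delta)^m\le m\delta$, and the target event is monotone in $W$. We will therefore prove the bound for the union and pass to an $(m\delta)$-random set by monotonicity.

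The plan is to track, round by round, a ``residual'' family and its minimum residual size. Set $\mathcal{F}_0=\mathcal{F}$. Given $\mathcal{F}_{i-1}$, we look at the sets $F\setminus W_i$ for $F\in\mathcal{F}_{i-1}$. We let $\mathcal{F}_i$ consist of those residuals that are of minimal size, or, more precisely, the residuals of minimal size for each ``type''. The family must be chosen so that two properties hold. First, $\mathcal{F}_i$ is still $r$-spread \emph{relative to the original family}: for every set $Z$, the residuals containing $Z$ come from members of $\mathcal{F}$ whose link structure is controlled by $\Delta_{|Z|}(\mathcal{F})\le r^{-|Z|}|\mathcal{F}|$. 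Second, $\emptyset\in\mathcal{F}_m$ exactly when some $F\in\mathcal{F}$ satisfies $F\subseteq W$.

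The core is a one-step lemma. If $\mathcal{G}\subseteq\binom{[n]}{\le \ell}$ is $r$-spread and $W'$ is $\delta$-random, then
\[
\mathbb{E}\Bigl[\min_{G\in\mathcal{G}}|G\setminus W'|\Bigr]\le \frac{2}{\log_2(r\delta)}\,\ell .
\]
Equivalently, the probability that every residual has size at least $j$ decays fast in $j$. We prove this by a counting (encoding) argument over pairs $(W',G)$. For a ``bad'' $W'$, fix a member $G$ and its minimal residual $R=G\setminus W'$ with $|R|=j$. Then encode $W'\cup G$ together with the identity of $R$ inside some member of $\mathcal{G}$ contained in $W'\cup G$. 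The number of choices for $W'\cup G$ gains a factor of $\delta$ per element relative to $W'$. The number of ways to recover $R$ costs at most $|\mathcal{G}[R]|\le r^{-j}|\mathcal{G}|$ by spreadness, times $2^{\ell}$-type choices for locating $R$. Balancing these gives a bad-probability of roughly $(2^{O(1)}/(r\delta))^{j}$ per level. Summing over $j$ yields the stated expectation bound, with Stoeckl's sharper bookkeeping giving the clean constant $2/\log_2(r\delta)$.

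Iterating over the $m$ rounds, conditioning on $W_1,\dots,W_{i-1}$ and applying the one-step lemma to $\mathcal{F}_{i-1}$ (which is still spread), we get
\[
\mathbb{E}\bigl[\min_{F\in\mathcal{F}_m}|F|\bigr]\le\Bigl(\frac{2}{\log_2(r\delta)}\Bigr)^m k .
\]
Markov's inequality then bounds the probability that no residual is empty, which is exactly the failure probability. The main obstacle we expect is the one-step lemma together with the definition of the residual family. Spreadness is \emph{not} inherited by arbitrary links, so we must choose $\mathcal{F}_i$ canonically (minimal residuals, with ties broken by a fixed order). We must then run the encoding argument against the original $\mathcal{F}$ rather than against $\mathcal{F}_{i-1}$, so that only the hypothesis $\Delta_s(\mathcal{F})\le r^{-s}|\mathcal{F}|$ is used at every round.
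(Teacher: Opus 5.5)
The paper does not prove this lemma; it cites Tao and Stoeckl. So your sketch has to stand on its own, and it has a genuine gap in the iteration. The reduction to $W=W_1\cup\cdots\cup W_m$ with independent $\delta$-random $W_i$, followed by monotone coupling, is fine. Your one-step lemma is also correct as a statement: for an $r$-spread family of sets of size at most $\ell$, the expected minimum residual is at most $\beta\ell$, with $\beta=2/\log_2(r\delta)$.

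\textbf{The gap: the one-step lemma does not chain.} Its output controls a \emph{minimum}, but its input must be an $r$-spread family of small \emph{maximum} size.
\begin{itemize}
\item If $\mathcal{F}_i$ is the family of minimum-size residuals, it is not spread at all; it may be a single set. The ``relative'' bound $|\mathcal{F}_i[Z]|\le r^{-|Z|}|\mathcal{F}|$ says nothing once $|\mathcal{F}_i|\ll|\mathcal{F}|$.
\item If instead you keep one residual per member of $\mathcal{F}$, which is the only way spreadness relative to $|\mathcal{F}|$ survives, nothing forces the maximum size below $k$. Only the average and the minimum shrink.
\end{itemize}
Either way your lemma gives $\beta k$ in every round, never $\beta^m k$.

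\textbf{What the iteration needs.} First, a per-member fragment $T_i(F)$ with $T_i(F)\subseteq T_{i-1}(F)\subseteq F$, each of the form $F^*\setminus(W_1\cup\cdots\cup W_i)$ with $F^*\in\mathcal{F}$. For the structural part one can take $T_i(F)=T_{i-1}(F')\setminus W_i$, where $F'$ minimizes $|T_{i-1}(F')\setminus W_i|$ subject to $T_{i-1}(F')\subseteq T_{i-1}(F)\cup W_i$. Given the history, spreadness relative to $|\mathcal{F}|$ passes to the fragments because $T_{i-1}(F)\subseteq F$. Second, and this is the crucial part, a one-step estimate that contracts the \emph{average}: $\mathbb{E}_{F,W_i}|T_i(F)|\le\beta\,\mathbb{E}_F|T_{i-1}(F)|$ for $F$ uniform in $\mathcal{F}$. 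This yields $\mathbb{E}|T_m(F)|\le\beta^m k$. Failure forces $T_m(F)\neq\emptyset$ for every $F$, so Markov's inequality in the random $F$ finishes.

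\textbf{Your encoding sketch does not supply this estimate.} As written, it also conflates the minimizer with a random member.
\begin{itemize}
\item Spreadness can only enter if $G$ is a \emph{uniformly random} member.
\item Its residual must then be the fragment $T=G^*\setminus W'$ of the best $G^*\subseteq G\cup W'$, not $G\setminus W'$.
\item The code must be $Z=W'\cup T$, not $W'\cup G$. Each element added to $W'$ costs a factor $\delta^{-1}$ in probability (not a gain), and spreadness repays $r^{-1}$ only for the $|T|$ elements of the fragment.
\end{itemize}
The device that locates $T$ cheaply is this: with a fixed tie-break, $G^*$ is the first member contained in $Z$. Indeed, every member inside $Z$ has residual contained in $T$, hence ties with $G^*$. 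So $T$ is a subset of a set determined by $Z$, and $G$ is then recovered among at most $r^{-|T|}|\mathcal{G}|$ members containing $T$. But locating $T$ costs about $\binom{\ell}{|T|}$, with $\ell$ the \emph{maximum} size. This counting therefore controls fragments through the maximum, not the average, and cannot be iterated as it stands.

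Establishing the averaged contraction, with the constant $2$, is the substance of the cited proofs. It is exactly the step your proposal leaves to ``sharper bookkeeping''.
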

\begin{theorem}\label{thmstrongsp}
	For any $c,d\geq1$ and $M>0$, there exist constants
	$L=L(c,d,M)$ and $C=C(c,d,M)$ such that the following holds. Let $k\geq t+2$ and $Ctk\log_2k\leq n<t+Mk^{d}(k-t)$. If $
	\mathcal F\subseteq\binom{[n]}k$ is $t$-intersecting, then there exists a $t$-intersecting family $\mathcal S\subseteq \binom{[n]}{\leq q}$, where $q=t+\lceil Lt\log_2 k\rceil$, such that $|\mathcal F\setminus \mathcal F[\mathcal S]|\leq k^{-ct}\left(\binom{n-t}{k-t}-\binom{n-k}{k-t}\right).$
\end{theorem}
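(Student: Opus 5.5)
We will follow the standard Kupavskii--Zakharov spread approximation scheme. Put $r_0:=2^{\lceil L' \rceil}$-type threshold; concretely, let $r:=k^{\beta}$ with $\beta$ a large constant depending on $c,d,M$ (chosen below), and $q=t+\lceil Lt\log_2k\rceil$. We build $\mathcal S$ greedily. Set $\mathcal F_1=\mathcal F$. At step $i$, if $|\mathcal F_i|\le k^{-ct}\bigl(\binom{n-t}{k-t}-\binom{n-k}{k-t}\bigr)$, stop. Otherwise pick an inclusion-maximal set $S_i$ with $|\mathcal F_i[S_i]|\ge r^{-|S_i|}|\mathcal F_i|$; then $\mathcal F_i(S_i)$ is $r$-spread (as in Lemma \ref{lemmaspreadness-size}: maximality of $S_i$ is exactly spreadness of the link). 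Put $S_i$ into $\mathcal S$ if $|S_i|\le q$, set $\mathcal F_{i+1}=\mathcal F_i\setminus\mathcal F_i[S_i]$, and continue. The first task is to show that we never pick $|S_i|>q$ before stopping: if $|S_i|>q$ then $|\mathcal F_i|\le r^{|S_i|}|\mathcal F_i[S_i]|\le r^{q}\binom{n-q}{k-q}$-type bound is too weak directly, so instead use that $\mathcal F_i$ is $t$-intersecting; each $F\in\mathcal F_i$ meets a fixed $F_0\in\mathcal F_i$ in $t$ elements, so $|\mathcal F_i|\le\binom{k}{t}\max_T|\mathcal F_i[T]|$, and iterating the non-spread condition along a chain of length $q$ gives $|\mathcal F_i|\le \binom{k}{t}\binom{n-t}{k-t}(k/r)^{\Omega(q-t)}$-ish; more cleanly, one shows via Lemma \ref{lemmaspreadness-size} applied to $\mathcal F_i(T)$ that if no set of size $\le q$ has density $r^{-|S|}$ then $|\mathcal F_i[T]|\le r^{t}\cdot (\text{spread constraint})$ forces $|\mathcal F_i|\le \binom kt (k/r)^{q-t}\binom{n-t}{k-t}$. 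Using $\binom{n-t}{k-t}-\binom{n-k}{k-t}\ge \tfrac12(k-t)\binom{n-t-1}{k-t-1}$ (Lemma \ref{lemmabinom}) and $n<t+Mk^d(k-t)$, i.e. $\binom{n-t}{k-t}/\binom{n-t-1}{k-t-1}\le Mk^d$, this is below $k^{-ct}$ times the target once $(k/r)^{Lt\log_2k}\le k^{-(c+d+2)t-\log M}$, which holds for $\beta\ge2$ and $L$ large. So every removed piece has $|S_i|\le q$ and all of $\mathcal F\setminus\mathcal F[\mathcal S]$ is the final leftover, giving the size bound.

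The second task is that $\mathcal S$ is $t$-intersecting. Suppose $S_i,S_j\in\mathcal S$ with $|S_i\cap S_j|<t$. The links $\mathcal A=\mathcal F_i(S_i)$ and $\mathcal B=\mathcal F_j(S_j)$ are $r$-spread families of sets of size $\le k$, and for $A\in\mathcal A$, $B\in\mathcal B$ we need $|(A\cup S_i)\cap(B\cup S_j)|\ge t$, so $A\cup B$ must hit $S_i\cup S_j$ or $A\cap B$ must contribute. Restrict to $\mathcal A'=\mathcal A$ restricted to sets avoiding $S_j\setminus S_i$ and similarly $\mathcal B'$; spreadness with $r\gg q$ shows these retain at least half the mass (each element kills at most a $1/r$ fraction, and $|S_i\cup S_j|\le 2q\ll r$). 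Then $\mathcal A',\mathcal B'$ must be cross-intersecting in a strong sense (some pair of sets must share an element since $|S_i\cap S_j|\le t-1$). Now take a uniformly random partition of $[n]\setminus(S_i\cup S_j)$ into two halves $W_1,W_2$; each is $1/2$-random, so by Theorem \ref{spreadlemma} with $m\delta=1/2$, $m=\log_2(2k)$, and $r\delta$ large (needs $r\ge k^{\beta}$ with $\log_2(r\delta)>4$), each of $\mathcal A'$, $\mathcal B'$ has with probability $>1/2$ a member inside $W_1$, resp. $W_2$. Hence some disjoint $A\subseteq W_1$, $B\subseteq W_2$ exist, and $|(A\cup S_i)\cap(B\cup S_j)|=|S_i\cap S_j|<t$, contradicting that $\mathcal F$ is $t$-intersecting.

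The main obstacle is the first task: calibrating $r$ so that simultaneously (a) the spread lemma applies ($r$ polynomial in $k$ suffices) and (b) the non-spread leftover bound beats $k^{-ct}$ relative to $\binom{n-t}{k-t}-\binom{n-k}{k-t}$; here one must use the $t$-intersecting structure (reducing to $t$-degrees via $\binom kt$) rather than raw $\binom{n}{k}$, and the hypothesis $n\ge Ctk\log_2k$ is what makes $\binom{n-q}{k-q}/\binom{n-t}{k-t}\approx(k/n)^{q-t}$ small enough with $q-t\asymp Lt\log_2k$ while keeping $r\le n/(2q)$ needed for removing $S_i\cup S_j$. I would first try $r=n/(4q)$ rather than a power of $k$, since then $k/r\le 4qk/n\le 1/2$-ish by choice of $C$, and $(1/2)^{Lt\log_2k}=k^{-Lt}$ absorbs all polynomial losses.
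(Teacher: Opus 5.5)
\textbf{Verdict: there is a genuine gap, in the calibration of $r$ and hence in your first task.}

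Your overall architecture matches the paper's. You greedily extract inclusion-maximal dense sets $S_i$ so that the links $\mathcal F_i(S_i)$ are $r$-spread. You then show $\mathcal S$ is $t$-intersecting by discarding link members that meet $S_j\setminus S_i$ (keeping $(r/2)$-spreadness) and applying the spread lemma to a random bipartition. That second half is fine. It only needs $r\ge 2q$ and $r\gtrsim 64\log_2(2k)$, not $r\ge k^{\beta}$.

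\textbf{Why the first task fails.} A dense set $S$ only gives
$|\mathcal F_i|<r^{|S|}\binom{n-|S|}{k-|S|}\le r^t(r/r_0)^{|S|-t}\binom{n-t}{k-t}$, where $r_0=(n-t)/(k-t)$. So the leftover estimate needs an \emph{upper} bound $r\le r_0/2$. Both of your choices violate this in part of the admissible range. Take $t=1$, $n=Ck\log_2k$ and $k$ large, so $r_0\approx C\log_2k$. Then $r=k^\beta$ and $r=n/(4q)\approx Ck/(4L)$ are both far larger than $r_0$.

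In that regime the step fails outright. Let $\mathcal F$ be the full star at $1$. For $S\ni 1$ with $|S|=s\le k$, put $P_s=r^s\binom{n-s}{k-s}/\binom{n-1}{k-1}$.
\begin{itemize}
\item The ratio $P_{s+1}/P_s=r(k-s)/(n-s)$ is decreasing in $s$, so $P_s$ is unimodal.
\item $P_1=r>1$ and $P_k=r^k/\binom{n-1}{k-1}>1$, since $r\ge en/k$ here.
\item Hence every such $S$ is dense. A set avoiding $1$ is never maximal, because adding $1$ keeps it dense.
\end{itemize}
So the only inclusion-maximal dense sets are the members of $\mathcal F$ themselves, of size $k>q$, while $|\mathcal F_1|=\binom{n-1}{k-1}$. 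Thus ``we never pick $|S_i|>q$ before stopping'' is false for your $r$. Your claimed bound $\binom kt(k/r)^{q-t}\binom{n-t}{k-t}$ improves as $r$ grows, which is the wrong direction, and it cannot be derived. The $\binom kt$ reduction via $t$-intersection does not rescue it.

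\textbf{The fix.} Take $r$ as small as the second half allows. The paper uses $r=2^6(q+\log_2(2k))\le 64(L+4)t\log_2k$ together with $C\ge128(L+4)$, so that $r_0>n/k\ge Ct\log_2k\ge 2r$. Then the direct bound you dismissed as too weak is exactly what works. If the maximal dense set has $|S_N|>q$, then
$|\mathcal F_N|\le r^t2^{-(|S_N|-t)}\binom{n-t}{k-t}\le r^tk^{-Lt}\binom{n-t}{k-t}$.
Here $r^t\le(64(L+4)k^2)^t$ is absorbed by $k^{-Lt}$ once $L$ is large. One also needs
$\binom{n-t}{k-t}-\binom{n-k}{k-t}\ge\binom{n-t}{k-t}/(r_0+1)\ge\binom{n-t}{k-t}/((M+1)k^d)$.

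Two further corrections:
\begin{itemize}
\item Your lower bound $\frac12(k-t)\binom{n-t-1}{k-t-1}$ for this difference is false when $r_0<k-t$, which happens in the range. Use the bound above instead.
\item You say $r\le n/(2q)$ is needed ``for removing $S_i\cup S_j$''; that has the direction reversed. That step needs $r\ge 2q$. The two constraints $2q\lesssim r\le r_0/2$ are compatible precisely because $n\ge Ctk\log_2k$.
\end{itemize}
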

\begin{proof}
Let $L=L(c,d,M)$ be determined later, and set
$r=2^6(q+\log_2(2k))$.
\begin{claim}\label{claimthmstrongsp1}
Any two  $(r/2)$-spread families in $\binom{[n]}{\leq k}$ are not cross-intersecting.
\end{claim}
\begin{proof}
Suppose that $\mathcal{G}_1,\mathcal{G}_2\subseteq\binom{[n]}{\leq k}$ are $(r/2)$-spread. Pick a random $2$-partition $[n]=W_1\cup W_2$ so that every point is included in a block uniformly, and the choices are independent. Now every $W_i$ has the same distribution as a $(1/2)$-random subset of $[n]$. We apply Theorem \ref{spreadlemma} to $m=\lceil\log_2(2k)\rceil$ and $\delta=(2m)^{-1}$. Note that $0.5r\delta>2^6\log_2(2k)/(2^2\log_2(2k))=2^4$. Then
$$\mathbb{P}[G\subseteq W_i\;\mbox{for some}\;G\in\mathcal{G}_i]\geq1-\left(\frac{2}{\log_2(0.5r\delta)}\right)^mk>1-\left(\frac{1}{2k}\right)\cdot k=\frac{1}{2}$$
for $i=1,2$. Therefore there is a specified $2$-partition $W_1, W_2$ together with $G_1\in\mathcal{G}_1,G_2\in\mathcal{G}_2$ such that $G_i\subseteq W_i$ for $i=1,2$, and certainly $G_1\cap G_2=\emptyset$.
\end{proof}
 We iteratively construct the family $\mathcal{S}$ as follows. Set $\mathcal{F}_1=\mathcal{F}$. For each $i\geq1$, do the following:	
\begin{itemize}
\item[\rm(a)]If $|\mathcal{F}_i|\leq k^{-ct}\left(\binom{n-t}{k-t}-\binom{n-k}{k-t}\right)$, then terminate.
\item[\rm(b)]Otherwise, since $\mathcal{F}_i$ is $t$-intersecting, it is not $r$-spread from Claim \ref{claimthmstrongsp1}, then one can find an inclusion-maximal subset $S_i$ such that $|\mathcal{F}_i[S_i]|>r^{-|S_i|}|\mathcal{F}_i|$.
\item[\rm(c)]If $|S_i|>q$, then terminate.
\item[\rm(d)]Otherwise, let $\mathcal{F}_{i+1}:=\mathcal{F}_i\setminus\mathcal{F}_i[S_i]$.
\end{itemize}
Suppose that the procedure terminates at the $N$-th round, and set $\mathcal{S}=\{S_1,S_2,\ldots,S_{N-1}\}$ to be the resulting family. Note that
$$\mathcal{F}_N=\mathcal{F}\setminus\mathcal{F}[\mathcal{S}].$$
Let us collect two properties of $\mathcal{S}$. 
\begin{claim}\label{claimthmstrongsp2}
The following holds.
\begin{itemize}
\item[\rm(i)]For suitably chosen $L$ and $C$, it holds that  $|\mathcal{F}_{N}|\leq k^{-ct}\left(\binom{n-t}{k-t}-\binom{n-k}{k-t}\right)$.
\item[\rm(ii)]Each $\mathcal{F}_i(S_i)$ is $r$-spread.
\end{itemize}
\end{claim}
\begin{proof}
The proof of (ii) is easy. Indeed, by our procedure, $S_i$ is inclusion-maximal with $|\mathcal{F}_i[S_i]|>r^{-|S_i|}|\mathcal{F}_i|$. Then for each non-empty $T\subseteq[n]\setminus S_i$, we have
\begin{align*}
|(\mathcal{F}_i(S_i))[T]|=|\mathcal{F}_i(S_i\cup T)|\leq r^{-|S_i\cup T|}|\mathcal{F}_i|<r^{-|T|}|\mathcal{F}_i[S_i]|.
\end{align*}
So $\mathcal{F}_i(S_i)$ is $r$-spread.

Let us prove (i). By the definition of $N$, either $|\mathcal{F}_{N}|\leq k^{-ct}\left(\binom{n-t}{k-t}-\binom{n-k}{k-t}\right)$ or $|S_{N}|>q$. For the former case we have $$|\mathcal{F}\setminus\mathcal{F}[\mathcal{S}]|=|\mathcal{F}_{N}|\leq k^{-ct}\left(\binom{n-t}{k-t}-\binom{n-k}{k-t}\right).$$

Suppose $|S_N|>q$. Then we get
\begin{align*}
	|\mathcal F_N|&<r^{|S_N|}|\mathcal{F}_N[S_N]|\leq r^{|S_N|}\binom{n-|S_N|}{k-|S_N|}.
\end{align*}
Let us choose the constants and prove $|\mathcal F_N|\leq  k^{-ct}\left(\binom{n-t}{k-t}-\binom{n-k}{k-t}\right)$. First, we claim that for all sufficiently large $L$,
\begin{equation}\label{equlemmastrongsp1}
	r^t k^{-Lt}\leq\frac{1}{(M+1)k^{ct+d}}
	\quad\mbox{for all }k\mbox{ and }t.
\end{equation} 
Indeed, since $k\geq t+2\geq3$, we have
\begin{align*}
r
&=64\bigl(t+\lceil Lt\log_2 k\rceil+\log_2(2k)\bigr)\leq64(L+4)t\log_2 k\leq64(L+4)k^2.
\end{align*}
Choose $L>c+d+2$ sufficiently large that
$64(L+4)3^{c+d+2-L}<(M+1)^{-1}$.
Such an $L$ exists since the left-hand side tends to zero as
$L\to\infty$. Then
\begin{align*}
r^t k^{-Lt}\cdot k^{ct+d}&\leq\bigl(64(L+4)k^2\bigr)^t k^{ct+d-Lt}=\bigl(64(L+4)k^{c+d/t+2-L}\bigr)^t\\
&\leq\bigl(64(L+4)3^{c+d+2-L}\bigr)^t<(M+1)^{-1},
\end{align*}
which proves (\ref{equlemmastrongsp1}).

After fixing $L$, choose $C\geq128(L+4)$. Then
\begin{equation}\label{equlemmastrongsp2}
	r_0:=\frac{n-t}{k-t}>\frac nk
	\geq Ct\log_2 k\geq2r.
\end{equation}
Therefore, from $|S_N|>q=t+\lceil Lt\log_2 k\rceil$ and
(\ref{equlemmastrongsp2}), we derive that
\begin{align*}
|\mathcal{F}_N|\bigg/\binom{n-t}{k-t}&<r^{|S_N|}r_0^{-(|S_N|-t)}=r^t(r_0/r)^{-(|S_N|-t)}\\
	&\leq r^t2^{-(|S_N|-t)}\leq r^t2^{-Lt\log_2 k}=r^t k^{-Lt}.
	\end{align*}
On the other hand, by applying Lemma \ref{lemmabinom} (i) to $m=n-t$ and $u=v=k-t$, we obtain that
\begin{align*}
	\binom{n-k}{k-t}\bigg/\binom{n-t}{k-t}&\leq1-\frac{(k-t)^2}{(n-t)+(k-t)^2}=1-\frac{k-t}{r_0+k-t}\leq1-\frac{1}{r_0+1}.
\end{align*}
This together with $r_0<Mk^d$ gives
\begin{equation}\label{equlemmastrongsp4}
	\binom{n-t}{k-t}-\binom{n-k}{k-t}\geq\frac{1}{r_0+1}\binom{n-t}{k-t}\geq\frac{1}{(M+1)k^d}\binom{n-t}{k-t}.
\end{equation}
Combining these with (\ref{equlemmastrongsp1}) yields
\begin{align*}
	|\mathcal{F}\setminus\mathcal{F}[\mathcal{S}]|
	&=|\mathcal{F}_N|
	\leq r^t k^{-Lt}\binom{n-t}{k-t}\\
	&\leq(M+1)r^t k^{d-Lt}
	\left(\binom{n-t}{k-t}-\binom{n-k}{k-t}\right)\\
	&\leq k^{-ct}
	\left(\binom{n-t}{k-t}-\binom{n-k}{k-t}\right),
\end{align*}
which finishes the proof of Claim \ref{claimthmstrongsp2}.
\end{proof}
It remains to show that $\mathcal{S}$ is $t$-intersecting.  To the contrary, suppose $|S_u\cap S_v|<t$. We note that here $u$ and $v$ need not be distinct. Write $\mathcal{G}_u=\mathcal{F}_u(S_u)$ and $\mathcal{G}_v=\mathcal{F}_v(S_v)$ for short. Define $\mathcal{G}_u'=\{G\in\mathcal{G}_u: G\cap(S_v\setminus S_u)=\emptyset\}$ and $\mathcal{G}_v'=\{G\in\mathcal{G}_v: G\cap(S_u\setminus S_v)=\emptyset\}$. Since $|S_v|\leq q$ and $r\geq2q$, the $r$-spreadness of $\mathcal{G}_u$ gives
	$$
	|\mathcal{G}_u\setminus\mathcal{G}_u'|\leq
	\sum_{x\in S_v\setminus S_u}|\mathcal{G}_u[\{x\}]|
	\leq|S_v|\cdot(r^{-1}|\mathcal{G}_u|)\leq\frac12|\mathcal{G}_u|.$$
	Hence $
	|\mathcal{G}_u'|\geq\frac12|\mathcal{G}_u|$. Then $\mathcal{G}_u'$ is $(r/2)$-spread as Claim \ref{claimthmstrongsp2} (ii) gives that $\mathcal G_u$ is $r$-spread. More precisely, for all non-empty  $T\subseteq[n]\setminus S_u$, we have
	$$|\mathcal{G}_u'[T]|\leq|\mathcal{G}_u[T]|\leq r^{-|T|}\cdot|\mathcal{G}_u|\leq (2r^{-|T|})\cdot|\mathcal{G}_u'|\leq(r/2)^{-|T|}|\mathcal{G}_u'|.$$
	Hence $\mathcal{G}_u'$ is $(r/2)$-spread. Similarly, $
	|\mathcal G_v'|\geq\frac12|\mathcal G_v|$ and $\mathcal{G}_v'$ is $(r/2)$-spread. By Claim \ref{claimthmstrongsp1}, the families $\mathcal{G}_u'$ and $\mathcal{G}_v'$ are not cross-intersecting, that is, there are $G_u\in\mathcal{G}_u'$ and $G_v\in\mathcal{G}_v'$ such that $G_u\cap G_v=\emptyset$. However, this leads to $G_u\cup S_u, G_v\cup S_v\in\mathcal{F}$ with 
	$$|(G_u\cup S_u)\cap(G_v\cup S_v)|=|S_u\cap S_v|<t,$$
	which contradicts that $\mathcal{F}$ is $t$-intersecting. Thus  $\mathcal S$ must be $t$-intersecting.
\end{proof}
For pairwise disjoint subsets $X, P, I\subseteq[n]$ with $|X|=t$, $|P|=k-t$ and $|I|=2$, define
\begin{align*}
\mathcal{K}(X,P,I)=&\left\{F\in\binom{[n]}{k}:X\subseteq F,\;F\cap P\neq\emptyset\right\}\\
&\cup\left\{F\in\binom{[n]}{k}:X\cup I\subseteq F,\;F\cap P=\emptyset\right\}\\
&\cup\{(X\setminus\{x\})\cup P\cup\{y\}:x\in X,\;y\in I\}.
\end{align*}
The family defined above is isomorphic to $\mathcal{K}(n,k,t,2)$.\vspace{1em}

\noindent{\bf Proof of Theorem \ref{thmremoval}.}\;We want to find a sufficiently large $C=C(\eta,\theta)$ such that,
for $k\geq\max\{t+2,(1+\eta)t\}$ and $n\geq Ctk\log_2 k$,
if $\mathcal{F}\subseteq\binom{[n]}{k}$ is a non-trivial
$t$-intersecting family with
$|\mathcal{F}|\geq(1-\theta)|\mathcal{K}(n,k,t,2)|$,
then either $\mathcal{F}$ is isomorphic to a subfamily of
$\mathcal{A}(n,k,t)$ or $\mathcal{H}(n,k,t)$, or there is a copy
$\mathcal{K}$ of $\mathcal{K}(n,k,t,2)$
(that is, $\mathcal{K}\cong\mathcal{K}(n,k,t,2)$) with
$|\mathcal{F}\setminus\mathcal{K}|
\leq2\theta|\mathcal{K}(n,k,t,2)|$. The proof proceeds as follows. First, for $n=\Omega(k^2)$, we perform the peeling procedure to $\mathcal{F}$ itself, and then prove using structure lemmas (Lemmas \ref{lemmafin-stru1-single} and \ref{lemmafin-stru2}). In the complementary range, we apply Theorem \ref{thmstrongsp} to the family to get a spread approximation $\mathcal{S}$, and then apply the peeling procedure to $\mathcal{S}$.  

To begin with, we set $r_0=(n-t)/(k-t)$. We may suppose that $\mathcal{F}$ is maximal, and it is isomorphic to neither $\mathcal{A}(n,k,t)$ nor $\mathcal{H}(n,k,t)$. Recall that 
$$|\mathcal{K}(n,k,t,2)|=\binom{n-t}{k-t}-\binom{n-k}{k-t}+\binom{n-k-2}{k-t-2}+2t.$$
Also note that Lemma \ref{lemmabinom} (i) gives  $\binom{n-t}{k-t}-\binom{n-k}{k-t}>\frac{r_0(k-t)}{r_0+(k-t)}\binom{n-t-1}{k-t-1}$. 
Hence
\begin{equation}\label{equthmremoval1}
|\mathcal{F}|\geq(1-\theta)|\mathcal{K}(n,k,t,2)|\;\;\mbox{implies}\;\;|\mathcal{F}|>(1-\theta)\cdot\frac{r_0(k-t)}{r_0+(k-t)}\binom{n-t-1}{k-t-1}.
\end{equation}
 
\noindent{\bf Case 1.\;}$n\geq C_1k^2$.

First, let $C_1$ be sufficiently large so that $$r_0:=(n-t)/(k-t)\geq6\max\{(t+2)^2/(k-t),k\}.$$ 
Then we may adapt the notation in Assumption \ref{assumption}, where $r_0$ plays the role of $r$ and $\varepsilon_{\ref{assumption}}:=ek/r_0\leq0.5$ plays the role of $\varepsilon$. More precisely, perform the algorithm to $\mathcal{F}$, the uniformity $q=k$ and the initial fingerprint $\mathcal{S}_0=\mathcal{M}(F)$, and let $N$ be the number of
rounds and $(\mathcal{S}_i,\mathcal{X}_i)$ ($i\leq N$) be the output families. By Lemmas \ref{lemmafin-stru0} (i), \ref{lemmafin-stru1-single} and \ref{lemmafin-stru2}, and by our assumption that  $\mathcal{F}\not\lesssim\mathcal{A}(n,k,t)$ and  $\mathcal{F}\not\lesssim\mathcal{H}(n,k,t)$, one of the following holds.\vspace{0.5em}
\begin{itemize}
\item[\rm(a)]$|\mathcal{F}|\leq\max\{\delta+\alpha b,M_1\}\cdot\binom{n-t-1}{k-t-1}$, where $M_1:=\delta_{q-t-4}+\alpha(t+2)+32(t+2)^2/r_0^2$.
\item[\rm(b)]$N=q-t-1$, $|\mathcal{S}^*|\in\{1,2\}$ and $|\mathcal{F}|\leq M_2\binom{n-t-1}{k-t-1}$, where  $M_2:=\frac{64(t+1)}{(1-\varepsilon_{\ref{assumption}})r_0^2}+2+\alpha t$. 
\item[\rm(c)]$N=q-t$ and $\tau_t(\mathcal{F})=t+1$.\vspace{1em}
\end{itemize}

For (a), by a routine computation, it holds that $\max\{\delta+\alpha b,M_1\}=O_{\eta}(k^2)/r_0$. On the other hand,  $\frac{r_0(k-t)}{r_0+(k-t)}>(1-C_1^{-1})\eta k/(1+\eta)$ for $n\geq C_1k^2$. Hence $$(1-\theta)\cdot\frac{r_0(k-t)}{r_0+(k-t)}>\max\{\delta+\alpha b,M_1\}$$
 for sufficiently large $C_1$ depending on $\eta$ and $\theta$, which implies  $|\mathcal{F}|<(1-\theta)|\mathcal{K}(n,k,t,2)|$.
 
 Suppose (b) holds. Here we need more detail from the proof of Lemma \ref{lemmafin-stru1-single} (ii). We see that the quantity $2\binom{n-t-1}{k-t-1}$ bounds the size of $\mathcal{F}[\mathcal{S}^*]$. More precisely, by the same argument as deriving (\ref{equlemmafin-stru25}), we have 
 \begin{align*}
 	|\mathcal{F}\setminus\mathcal{F}[\mathcal{S}^*]|&\leq\left|\bigcup_{i=0}^{q-t-2}\mathcal{F}[\mathcal{X}_i]\right|+|\mathcal{F}[\mathcal{S}]\setminus\mathcal{F}[\mathcal{S}^*]|\nonumber\\
 	&\leq\frac{64(t+1)}{1-\varepsilon_{\ref{assumption}}}\binom{n-t-2}{k-t-2}+(t+2-|\mathcal{S}^*|)\alpha\binom{n-t-1}{k-t-1}\nonumber\\
 	&\leq\left(\frac{64(t+1)}{(1-\varepsilon_{\ref{assumption}})r_0^2}+\frac{(t+1)b}{r_0}\right)\binom{n-t-1}{k-t-1}=O_{\eta}(k^2)/r_0\binom{n-t-1}{k-t-1}.
 \end{align*}
 We construct a copy $\mathcal{K}(X,P,I)$ of $\mathcal{K}(n,k,t,2)$ as follows. Choose a $t$-subset $X\subseteq\cap\mathcal{S}^*$. Next, choose a $(k-t)$-set $P$ with $X\cap P=\emptyset$ and $(\cup\mathcal{S}^*)\setminus X\subseteq P$,  and pick a $2$-subset $I\subseteq[n]\setminus(X\cup P)$. Then $\mathcal{F}\setminus\mathcal{K}(X,P,I)\subseteq\mathcal{F}\setminus\mathcal{F}[\mathcal{S}^*]$, and hence $$|\mathcal{F}\setminus\mathcal{K}(X,P,I)|\leq|\mathcal{F}\setminus\mathcal{F}[\mathcal{S}^*]|=O_{\eta}(k^2)/r_0\binom{n-t-1}{k-t-1}.$$
 So $|\mathcal{F}\setminus\mathcal{K}(X,P,I)|\leq2\theta|\mathcal{K}(n,k,t,2)|$ for large $C_1$. 

Suppose (c) holds. Now $\mathcal{S}^*\neq\emptyset$ consists of the $t$-covers of $\mathcal{F}$ with size $t+1$. Then by the maximality of $\mathcal{F}$, we obtain
\begin{equation}\label{equlemmafin-stru22copy}
\mathcal{F}[\mathcal{S}^*]=\left\{F\in\binom{[n]}{k}:S\subseteq F\;\mbox{for some}\;S\in\mathcal{S}^*\right\}.
\end{equation}
By Lemma \ref{lemmafin-stru2}, there are $j\in[q-t-1]$ and $X\in\binom{[n]}{t}$ such that $j=\min\{i\in[q-t-1]:\mathcal{S}_{i}=\{X\}\}$. Since $\mathcal{S}_{q-t-1}=\{X\}$, the sets removed in the procedure are those from $\cup_{j\leq q-t-2}\mathcal{X}_j$, where each has size at least $t+2$. So no member of $\mathcal{S}^*$ is removed, and hence each of them contains some from $\mathcal{S}_{j-1}\setminus\mathcal{X}_{j-1}$, and thus contains $X$. It follows that  $X\subseteq\cap\mathcal{S}^*$. Write $S=X\cup\{p_S\}$ for $S\in\mathcal{S}^*$. Then $$M:=\cup_{S\in\mathcal{S}^*}S=X\cup\{p_S:S\in\mathcal{S}^*\}.$$  Since $\mathcal{F}$ is non-trivial, $\mathcal{F}[\overline{X}]\neq\emptyset$. For each set $F$ from the part, we have $|F\cap X|=t-1$ and $\{p_S:S\in\mathcal{S}^*\}\subseteq F$ as it $t$-intersects with every member of $\mathcal{S}^*$. Therefore,
\begin{equation}\label{equthmremoval2}
\mathcal{F}[\overline{X}]\subseteq\left\{F\in\binom{[n]}{k}:F\cap M=M\setminus\{x\}\;\mbox{for some}\;x\in X\right\}.
\end{equation}
In particular, this implies $|M|-1\leq k$, namely, $s^*:=|\mathcal{S}^*|\leq k-t+1$. 

Assume that $s^*=1$. Let $\mathcal{S}^*=\{T\}$, and fix $G\in\mathcal{F}[\overline{X}]$. Then $F\cap(G\setminus X)\neq\emptyset$ whenever $F\in\mathcal{F}[X]\setminus\mathcal{F}[T]$, and hence $\mathcal{F}[X]\setminus\mathcal{F}[T]\subseteq\bigcup_{w}\mathcal{F}[X\cup\{w\}]$ with $w$ ranging over $G\setminus T$. Note that none of those $X\cup\{w\}$ is a $t$-cover of $\mathcal{F}$. By combining this with Lemmas \ref{lemmaind} and \ref{lemmafin-stru0} (i), for all $(k-t)$-subset $P$ with $T\subseteq X\cup P$ and for any $2$-subset $I\in[n]\setminus(X\cup P)$, it holds that
\begin{align*}
	|\mathcal{F}\setminus\mathcal{K}(X,P,I)|&\leq|\mathcal{F}[\overline{T}]|\leq\left|\bigcup_{i=0}^{q-t-2}\mathcal{F}[\mathcal{X}_i]\right|+|\mathcal{F}[X]\setminus\mathcal{F}[T]|\\
	&\leq\frac{9\binom{t+2}{2}}{(1-\varepsilon_{\ref{assumption}})r_0}\binom{n-t-1}{k-t-1}+(k-t+1)^2\binom{n-t-2}{k-t-2}=\frac{O_{\eta}(k^2)}{r_0}\binom{n-t-1}{k-t-1}
\end{align*}
 Hence $|\mathcal{F}\setminus\mathcal{K}(X,P,I)|\leq2\theta|\mathcal{K}(n,k,t,2)|$ for some $\mathcal{K}(X,P,I)$ and sufficiently large $C_1$. 

Suppose $s^*\geq2$. If $\mathcal{F}[X]=\mathcal{F}[\mathcal{S}^*]$, then the two families in (\ref{equthmremoval2}) coincide as $\mathcal{F}$ is maximal, and hence  $\mathcal{F}\cong\mathcal{L}(n,k,t,s^*)$. In this case, we note that $s^*\notin\{2,k-t+1\}$ as $\mathcal{F}\not\lesssim\mathcal{A}(n,k,t)$ and  $\mathcal{F}\not\lesssim\mathcal{H}(n,k,t)$. Hence $s^*\in[3,k-t]$. Set without loss of generality that $\mathcal{F}=\mathcal{L}(n,k,t,s)$. Now it is routine to check that $\mathcal{F}\setminus\mathcal{K}(n,k,t,2)\subseteq\mathcal{F}[\overline{X}]$ and 
$$\mathcal{F}[\overline{X}]\cap\mathcal{K}(n,k,t,2)=\{([k]\setminus\{i\})\cup\{j\}:i\in[t],\;j\in\{k+1,k+2\}\}.$$
Hence
\begin{align*}
	|\mathcal{F}\setminus\mathcal{K}(n,k,t,2)|&=t\left(\binom{n-t-s}{k-t-s+1}-2\right)<t\binom{n-t-s+1}{k-t-s+1}\\
	&\leq t/r_0^{s-2}\binom{n-t-1}{k-t-1}=O_\eta(k)/r_0\binom{n-t-1}{k-t-1}, 
\end{align*}
which is less than $2\theta|\mathcal{K}(n,k,t,2)|$ for large $C_1$. 

It remains to suppose $s^*\geq2$ and  $\mathcal{F}[\mathcal{S}^*]\subsetneqq\mathcal{F}[X]$. We proceed by considering 
\begin{equation*}
	\mathcal{B}=\{F\setminus X:F\in\mathcal{F}[X]\setminus\mathcal{F}[\mathcal{S}^*]\}\;\;\mbox{and}\;\;\mathcal{C}=\{F\setminus M:F\in\mathcal{F}[\overline{X}]\}.
\end{equation*}
From (\ref{equlemmafin-stru22copy}), we have
\begin{equation*}
	\mathcal{B}\subseteq\binom{[n]\setminus M}{k-t}\;\;\mbox{and}\;\;\mathcal{C}\subseteq\binom{[n]\setminus M}{k-t-s^*+1}.
\end{equation*}
Of course $\mathcal{F}[X]\setminus\mathcal{F}[\mathcal{S}^*]=\left\{X\cup B:B\in\mathcal{B}\right\}$. 
Let $B\in\mathcal{B}$ and $C\in\mathcal{C}$, and by definition pick $F,F'\in\mathcal{F}$ with $B=F\setminus X$ and $C=F'\setminus M$. Then $F\cap F'\cap M=X\cap(F'\cap M)$ has size exactly $t-1$, and so $B\cap C=F\cap F'\cap([n]\setminus M)$ is non-empty. Hence $\mathcal{B}$ and $\mathcal{C}$ are cross-intersecting. Fix $C\in\mathcal{C}$, then $\{(M\setminus\{x\})\cup C:x\in X\}$ form a collection of $t$-covers of $\mathcal{F}$ and moreover, since $|M\setminus X|=s^*\geq2$, every two sets from this collection are $t$-intersecting. Then the maximality of $\mathcal{F}$ yields $\{(M\setminus\{x\})\cup C:x\in X\}\subseteq\mathcal{F}$. Therefore, we obtain $$\mathcal{F}[\overline{X}]=\left\{(M\setminus\{x\})\cup C:x\in X,\;C\in\mathcal{C}\right\}.$$
Combining these gives
\begin{align*}
	\mathcal{F}=&\left\{F\in\binom{[n]}{k}:S\subseteq F\;\mbox{for some}\;S\in\mathcal{S}^*\right\}\\
	&\cup\left\{X\cup B:B\in\mathcal{B}\right\}\cup\left\{(M\setminus\{x\})\cup C:x\in X,\;C\in\mathcal{C}\right\}.
\end{align*}
Further, we observe that $\cap_{C\in\mathcal{C}}C=\emptyset$. To see this, let $w\in[n]\setminus M$. Note that $X\cup\{w\}$ is not a $t$-cover of $\mathcal{F}$. Then there exists $F\in\mathcal{F}$ with $|F\cap(X\cup\{w\})|<t$, and certainly  $F\in\mathcal{F}[\overline{X}]$. Now $|F\cap X|=t-1$, and consequently $w\notin F\setminus M\in\mathcal{C}$. Thus $\cap_{C\in\mathcal{C}}C=\emptyset$, or equivalently, $\tau_1(\mathcal{C})\geq2$. Choose a $(k-t)$-subset $P$ and a $2$-subset $I$ with $M\setminus X\subseteq P$ and $I\cap(X\cup P)=\emptyset$. Let $\mathcal{K}=\mathcal{K}(X,P,I)$. Since $M\setminus X\subseteq P$, we have
$$
\left\{F\in\binom{[n]}k:X\subseteq F,\ F\cap(M\setminus X)\neq\emptyset\right\}\subseteq\mathcal{K}.
$$
It follows that $|\mathcal{F}\setminus\mathcal{K}|\leq |\mathcal B|+t|\mathcal C|$. By $\tau_1(\mathcal{C})\geq2$, $\tau_1(\mathcal{B})\leq k-t-s^*+1$, and applying Lemma \ref{lemmakey} with $t=1$, $\mathcal{F}=\mathcal{B}$, $\mathcal{G}=\mathcal{C}$ and $m=2$, we obtain that 
$$|\mathcal{B}|\leq(k-t-s^*+1)^2\binom{n-t-s^*-2}{k-t-2}.$$ 
For $|\mathcal{C}|$, we simply use the bound $(k-t)\binom{n-t-s^*-1}{k-t-s^*}$ derived from $\mathcal{C}=\bigcup_{Z\in\binom{B_0}{t}}\mathcal{C}[Z]$ for fixed $B_0\in\mathcal{B}$. Therefore, by $s^*\geq2$,
\begin{align*}
|\mathcal{F}\setminus\mathcal{K}|&\leq |\mathcal B|+t|\mathcal C|\leq(k-t-s^*+1)^2\binom{n-t-s^*-2}{k-t-2}+t(k-t)\binom{n-t-s^*-1}{k-t-s^*}\\
&<(k-t)^2\binom{n-t-4}{k-t-2}+t(k-t)\binom{n-t-3}{k-t-2}=O_\eta(k^2)/r_0\binom{n-t-1}{k-t-1}.
\end{align*}
On the other hand,  $|\mathcal{K}(n,k,t,2)|>\frac{r_0(k-t)}{r_0+(k-t)}\binom{n-t-1}{k-t-1}$. Hence $|\mathcal{F}\setminus\mathcal{K}|<2\theta|\mathcal{K}(n,k,t,2)|$ for large $C_1$. This completes the proof in Case 1.\vspace{1em}

\noindent{\bf Case 2.\;}$Ctk\log_2 k\leq n<C_1k^2$.

For simplicity, we write 
\begin{equation*}
M_0:=\binom{n-t}{k-t}-\binom{n-k}{k-t}
\end{equation*}
for short. Choose a sufficiently large $c\geq1$ such that
\begin{equation}\label{equthmremoval4}
0.2c^{-1}+2^{-c}<(1-\theta)\;\;\mbox{and}\;\;2^{-c}<\theta.
\end{equation}

Let $M=C_1(1+\eta)/\eta$, and let
$C_{\ref{thmstrongsp}}=C_{\ref{thmstrongsp}}(c,1,M)$ and $L_{\ref{thmstrongsp}}=L_{\ref{thmstrongsp}}(c,1,M)$
be the constants given in Theorem \ref{thmstrongsp}.
We choose $C\geq C_{\ref{thmstrongsp}}$ sufficiently large, as specified below.
Since $k\geq(1+\eta)t$ and $n<C_1k^2$, we have
$n<t+Mk(k-t)$.
Applying Theorem \ref{thmstrongsp} to $\mathcal{F}$ with $d=1$
gives a $t$-intersecting family
$\mathcal{S}\subseteq\binom{[n]}{\leq q}$, where
$q=t+\lceil L_{\ref{thmstrongsp}}t\log_2 k\rceil$, such that

\begin{equation}\label{equthmremoval5}
|\mathcal{F}\setminus\mathcal{F}[\mathcal{S}]|\leq k^{-ct}M_0.
\end{equation}
 We next choose $C$ sufficiently large so that
\begin{equation}\label{equlemmanearly1}
	k-t\geq240cq\;\;\mbox{and}\;\;r_0\geq\max\{240cq,4\theta^{-1}\}.
\end{equation}
Indeed, from $Ctk\log_2 k\leq n<C_1k^2$, we obtain
\begin{equation*}
	q=t+\lceil L_{\ref{thmstrongsp}}t\log_2 k\rceil\leq(L_{\ref{thmstrongsp}}+2)t\log_2 k<\frac{(L_{\ref{thmstrongsp}}+2)C_1}{C}k\leq\frac{\eta k}{240c(1+\eta)}
	\leq\frac{k-t}{240c},
\end{equation*}
where the last two inequalities follow by choosing
$C\geq240c(L_{\ref{thmstrongsp}}+2)C_1(1+\eta)/\eta$ and using
$k\geq(1+\eta)t$.
Moreover,
\begin{equation*}
r_0=\frac{n-t}{k-t}>\frac nk\geq Ct\log_2 k\geq\frac{Cq}{L_{\ref{thmstrongsp}}+2}\geq240cq
\end{equation*}
provided that $C\geq240c(L_{\ref{thmstrongsp}}+2)$.
Finally, $r_0>C\geq4\theta^{-1}$ after increasing $C$ if necessary. Note that all these requirements on $C$ depend only on $\eta$ and $\theta$.

Perform the algorithm to
$\mathcal{S}$, the uniformity $q=t+\lceil L_{\ref{thmstrongsp}}t\log_2 k\rceil$ and an arbitrary initial fingerprint 
$\mathcal{S}_0$ (the existence is ensured by Lemma \ref{lemmafingerprintdef}), and let $N$ be the number of
rounds and $(\mathcal{S}_i,\mathcal{X}_i)$ ($i\leq N$) be the output families. By Lemma \ref{lemmafingerprintproperty-single} (iv) and the estimate
for $\varphi$ in the proof of Lemma \ref{lemmarqt} (i),
for all $i\leq\min\{N,q-t-1\}$, we have
\begin{align*}
	|\mathcal{X}_i|\leq\binom{q-i}{q-t-i}(q-t-i+1)^{q-t-i}\leq\left(\frac{e(q-i)}{q-t-i}\cdot(q-t-i+1)\right)^{q-t-i}<\left(6q\right)^{q-t-i}.
\end{align*}

Suppose $N\leq q-t-1$. It follows that
\begin{align*}
	\left|\bigcup_{i=0}^{N}\mathcal{F}[\mathcal{X}_i]\right|
	&\leq
	\sum_{i=0}^{N}
	|\mathcal{X}_i|\binom{n-(q-i)}{k-(q-i)}<\sum_{i=0}^{q-t-1}\left(\frac{6q}{r_0}\right)^{q-t-i}\binom{n-t}{k-t}\\
	&<\frac{6q/r_0}{1-6q/r_0}\binom{n-t}{k-t}<\frac{6q}{r_0-6q}\cdot\frac{r_0+k-t}{k-t}M_0<0.1c^{-1}M_0,
\end{align*}
where the last inequality follows from (\ref{equlemmanearly1}). Therefore, by (\ref{equthmremoval5}) and (\ref{equthmremoval4}),
\begin{align*}
|\mathcal{F}|&\leq\left|\bigcup_{i=0}^{N}\mathcal{F}[\mathcal{X}_i]\right|+|\mathcal{F}\setminus\mathcal{F}[\mathcal{S}]|\leq(0.1c^{-1}+k^{-ct})M_0<(1-\theta)M_0
\end{align*}

Suppose $N=q-t$. Now $\mathcal{S}_{q-t}=\{X\}$ for some $t$-subset $X$. Let $j$ be the minimal index with this property.

We proceed by proving that if $j>0$, then 
$|\mathcal{F}|<(1-\theta)M_0$. To see this, note that now
\begin{equation*}
	\mathcal{F}\subseteq\left(\bigcup_{i=0}^{j-1}\mathcal{F}[\mathcal{X}_i]\right)\cup\mathcal{F}[\mathcal{S}_{j-1}\setminus\mathcal{X}_{j-1}]\cup(\mathcal{F}\setminus\mathcal{F}[\mathcal{S}]).
\end{equation*}
Set $\mathcal{H}=\mathcal{S}_{j-1}\setminus\mathcal{X}_{j-1}$ and $\mathcal{H}_{a}:=\mathcal{H}\cap\binom{[n]}{q-a}$ for $j-1\leq a\leq q-t-1$. Note that every member of $\mathcal{H}$ contains $X$ and has size
at least $t+1$; otherwise the antichain $\mathcal{S}_{j-1}$ would
equal $\{X\}$, contrary to the minimality of $j$. Then by Lemma \ref{lemmafingerprintproperty-single} (iii), we have
$$|\mathcal{H}_a|=|\mathcal{H}_a[X]|\leq(q-t-j+2)^{q-t-a}\;\;\mbox{for}\;\;j-1\leq a\leq q-t-1.$$
Therefore, 
\begin{align*}
	|\mathcal{F}[\mathcal{S}_{j-1}\setminus\mathcal{X}_{j-1}]|&\leq\sum_{a=j-1}^{q-t-1}|\mathcal{H}_a|\binom{n-(q-a)}{k-(q-a)}\leq\sum_{a=j-1}^{q-t-1}\left(\frac{q}{r_0}\right)^{q-t-a}\binom{n-t}{k-t}\\
	&<\frac{q/r_0}{1-q/r_0}\binom{n-t}{k-t}<0.1c^{-1}M_0,
\end{align*}
and we derive from (\ref{equthmremoval4}) again that
$$|\mathcal{F}|<(0.1c^{-1}+0.1c^{-1}+k^{-ct})M_0<(1-\theta)M_0.$$

Next, suppose $j=0$. Now $X\subseteq S$ for all $S\in\mathcal{S}$, and hence
\begin{equation}\label{equthmremoval3}
|\mathcal{F}[\overline{X}]|\leq|\mathcal{F}\setminus\mathcal{F}[\mathcal{S}]|\leq k^{-ct}M_0<\theta|\mathcal{K}(n,k,t,2)|
\end{equation}
as $k^{-ct}<2^{-c}<\theta$.

First, suppose that there exists $F_0\in\mathcal{F}[\overline{X}]$ with $|F_0\cap X|=:a\leq t-2$. Note that $|F_0\setminus X|=k-a\geq k-t+2$. Fix $I,P\subseteq F_0\setminus X$ with $|I|=2, |P|=k-t$ and $I\cap P=\emptyset$. Let us count the size of $\mathcal{F}\setminus\mathcal{K}(X,P,I)$. Let $F\in\mathcal{F}[X]$ with $F\cap P=\emptyset$. Then $$|F\cap F_0|=|F\cap F_0\cap X|+|F\cap(F_0\setminus(X\cup P))|\geq t.$$
It follows that $|F\cap(F_0\setminus(X\cup P))|\geq t-a$. On the other hand, note that $|F_0\setminus(X\cup P)|=k-a-(k-t)=t-a$. Hence $F_0\setminus(X\cup P)\subseteq F$ and, in particular, $I\subseteq F$. Therefore,
$$\mathcal{F}\setminus\mathcal{K}(X,P,I)\subseteq\mathcal{F}[\overline{X}],$$
and consequently $|\mathcal{F}\setminus\mathcal{K}(X,P,I)|\leq\theta|\mathcal{K}(n,k,t,2)|$ from (\ref{equthmremoval3}). 

It remains to consider the case in which $|F\cap X|=t-1$ for all $F\in\mathcal{F}[\overline{X}]$. We write
$$\mathcal{Y}=\{F\setminus X:F\in\mathcal{F}[\overline{X}]\}.$$
Note that $\mathcal{Y}\neq\emptyset$ as $\mathcal{F}$ is non-trivial. If $\mathcal{Y}$ is a singleton with the element say, $Y_0$, then
\begin{align*}
\mathcal{F}[X]&\subseteq\left\{F\in\binom{[n]}{k}:X\subseteq F,\;F\cap Y_0\neq\emptyset\right\}\;\;\mbox{and}\\
\mathcal{F}[\overline{X}]&\subseteq\{(X\setminus\{x\})\cup Y_0:x\in X\}.
\end{align*}
Thus $\mathcal{F}\lesssim\mathcal{H}(n,k,t)$, a contradiction. Hence $|\mathcal{Y}|\geq2$, and fix $Y_1\neq Y_2\in\mathcal{Y}$, $z_1\in Y_1\setminus Y_2$ and $z_2\in Y_2\setminus Y_1$. Set $P=Y_1\setminus\{z_1\}$ and $I=\{z_1,z_2\}$, and write $s=|Y_1\cap Y_2|$ for short. Note that $1\leq s\leq k-t$. Let us consider $\mathcal{F}\setminus\mathcal{K}(X,P,I)$. If $F\in\mathcal{F}[X]\setminus\mathcal{K}(X,P,I)$, then $X\subseteq F$, $F\cap P=\emptyset$ and $F\cap I=\{z_1\}$. Therefore, 
\begin{align*}
|\mathcal{F}[X]\setminus\mathcal{K}(X,P,I)|&\leq\binom{n-k-2}{k-t-1}-\binom{n-(2k-t+2-s)}{k-t-1}\\
&\leq(k-t-s)\binom{n-k-3}{k-t-2}\leq(k-t-1)\binom{n-k-3}{k-t-2}.
\end{align*}
By Lemma \ref{lemmabinom} (i), we obtain
\begin{align*}
\frac{|\mathcal{K}(X,P,I)|}{|\mathcal{F}[X]\setminus\mathcal{K}(X,P,I)|}&>\frac{\binom{n-t}{k-t}-\binom{n-k}{k-t}}{(k-t-1)\binom{n-k-3}{k-t-2}}>\frac{r_0(k-t)}{r_0+k-t}\cdot\frac{\binom{n-t-1}{k-t-1}}{(k-t-1)\binom{n-k-3}{k-t-2}}\\
&>\frac{r_0^2}{r_0+k-t}\cdot\left(1+\frac{(k-t-2)(k-t)}{n-t-2}\right)>\frac{r_0(r_0+(k-t)-2)}{r_0+(k-t)}>\theta^{-1}.
\end{align*}
The last inequality follows from (\ref{equlemmanearly1}), since
\[
\frac{r_0(r_0+k-t-2)}{r_0+k-t}
=r_0-\frac{2r_0}{r_0+k-t}
>r_0-2\geq4\theta^{-1}-2>\theta^{-1}.
\]
This together with (\ref{equthmremoval3}) yields
\begin{equation*}
|\mathcal{F}\setminus\mathcal{K}(X,P,I)|\leq|\mathcal{F}[X]\setminus\mathcal{K}(X,P,I)|+|\mathcal{F}[\overline{X}]|<2\theta|\mathcal{K}(n,k,t,2)|,
\end{equation*}
which finishes the proof.{\hfill $\square$}\vspace{1em}

\noindent{\bf Proof of Theorem \ref{corothmremoval}.}\;
Apply Theorem \ref{thmremoval} with $\theta_{\ref{thmremoval}}$ replaced by $\theta/2$ and with the corresponding constant
$C=C_{\ref{thmremoval}}(\eta,\theta/2)$. Suppose to the contrary that
$\mathcal{F}\not\lesssim\mathcal{A}(n,k,t)$, $\mathcal{F}\not\lesssim\mathcal{H}(n,k,t)$ and $
|\mathcal{F}|>(1+\theta)|\mathcal{K}(n,k,t,2)|
>(1-\theta/2)|\mathcal{K}(n,k,t,2)|$. Then 
Theorem \ref{thmremoval} yields a family
$\mathcal{K}\cong\mathcal{K}(n,k,t,2)$ such that $
|\mathcal{F}\setminus\mathcal{K}|
\leq\theta|\mathcal{K}(n,k,t,2)|$. This leads to
\[
|\mathcal{F}|
\leq|\mathcal{K}|+|\mathcal{F}\setminus\mathcal{K}|
\leq(1+\theta)|\mathcal{K}(n,k,t,2)|,
\]
a contradiction. This completes the proof.
{\hfill$\square$}
\section{Concluding Remarks}\label{secremark}
In a sequel \cite{Wen-Lv-II}, we continue our study of large
$t$-intersecting families, focusing on $t$-covers and $t$-diversity.

We conclude with two further problems concerning possible extensions
of the results proved in the present paper. A natural direction for further research is to extend the classification of large $t$-intersecting families below the range covered by Theorem \ref{thmlarge}. The problem for small $n$ (for example, $n=O(k^2)$) might be challenging. So it might be interesting to consider the following problem.
\begin{problem}
For $n=\Omega(k^3)$, classify all maximal $t$-intersecting families in $\binom{[n]}{k}$ with size at least $|\mathcal{L}(n,k,t,k-t-1)|$. 
\end{problem}

By Theorem \ref{thmlarge} and Lemma \ref{lemmacountingfamily}, we obtain Corollary \ref{coroH-K}, which provides a $t$-intersection version of Han--Kohayakawa theorem for large $n$. In particular, the lower bound on $n$ here equals $O((t+1)(k-t+1))$ for $k-t=\Theta(t)$. It would be interesting to consider whether it holds for all $k\geq t+2$.
\begin{problem}
Is there an absolute constant $C$ such that the following holds for all $k\geq t+2\geq4$ and $n\geq Ckt$? If  $\mathcal{F}$ is a non-trivial $t$-intersecting family which is isomorphic to neither a subfamily of $\mathcal{A}(n,k,t)$ nor a subfamily of $\mathcal{H}(n,k,t)$, then $|\mathcal{F}|\leq|\mathcal{K}(n,k,t,2)|$. 
\end{problem}
\section*{Acknowledgments}
B. Lv is supported by National Natural Science Foundation of China (12571347 \& 12131011), and Beijing Natural Science Foundation (1252010).
\
\addcontentsline{toc}{chapter}{Bibliography}

{
	}

\begin{thebibliography}{99}
				\setlength{\itemsep}{-1pt}
		\bibitem{Ahlswede-Khachatrian-1996}
		R. Ahlswede and L.H. Khachatrian, The complete nontrivial-intersection theorem for systems of finite sets, J. Combin. Theory Ser. A 76 (1996) 121--138.
		
		\bibitem{Ahlswede-Khachatrian-1997}
		R. Ahlswede and L.H. Khachatrian, The complete intersection theorem for systems of finite sets, European J. Combin. 18 (1997) 125--136.
		
		\bibitem{sunflower} R. Alweiss, S. Lovett, K. Wu and J. Zhang, Improved bounds for the sunflower lemma, Ann. of Math. 194 (3) (2021) 795--815.
		
		\bibitem{Lv-2021}
		M. Cao, B. Lv and K. Wang, The structure of large non-trivial $t$-intersecting families of
		finite sets, European J. Combin. 97 (2021) 103373.
		
		\bibitem{Cao-Lu-Lv-Wang-2024} M. Cao, M. Lu, B. Lv and K. Wang, Nearly extremal non-trivial cross $t$-intersecting families and $r$-wise $t$-intersecting families, European J. Combin. 120 (2024) 103958.
		
		\bibitem{Dinur-Friedgut}I. Dinur and E. Friedgut,  Intersecting families are essentially contained in juntas, 
		Combin. Probab. Comput. 18 (2009) 107--122.
		
		\bibitem{Ellis-2019} D. Ellis, N. Keller and N. Lifshitz, Stability versions of  Erd\H{o}s--Ko--Rado type theorems via isoperimetry, J. Eur. Math. Soc. 21 (2019) 3857--3902.
		
		\bibitem{Ellis-book} D. Ellis, Intersection problems in extremal combinatorics: theorems, techniques and questions 
		old and new, in: Surveys in Combinatorics 2022, in: London Math. Soc. Lecture Note Ser., vol. 481, 
		Cambridge Univ. Press, Cambridge, 2022, pp. 115--173.
		
		\bibitem{Ellis-2024} D. Ellis, N. Keller and N. Lifshitz, Stability for the complete intersection theorem, and the
		forbidden intersection problem of Erd\H{o}s and S\'{o}s, J. Eur. Math. Soc. 26 (2024)  1611--1654.
		
		\bibitem{Erdos-Ko-Rado-1961}
		P. Erd\H{o}s, C. Ko and R. Rado, Intersection theorems for systems of finite sets, Quart. J. Math. Oxf. 2 (12) (1961) 313--320.
		
		\bibitem{Frankl-1976}
		P. Frankl, The Erd\H{o}s-Ko-Rado theorem is true for $n = ckt$, in: Combinatorics, Vol. I, Proc. Fifth Hungarian Colloq., Keszthely, 1976, in: Colloq. Math. Soc. J\'{a}nos Bolyai, vol. 18, North-Holland, 1978, 365--375.
		
		\bibitem{Frankl-1978}
		P. Frankl, On intersecting families of finite sets, J. Combin. Theory Ser. A 24 (1978) 146--161.
		%
		\bibitem{Frankl-1987} P. Frankl, Erd\H{o}s--Ko--Rado theorem with conditions on the maximal degree, J. Combin. Theory Ser. A 46 (1987) 252--263.
		
		\bibitem{Frankl-Tokushige-2016}
		P. Frankl and N. Tokushige, Invitation to intersection problems for finite sets, J.
		Combin. Theory Ser. A 144 (2016) 157--211.
		
		\bibitem{Frankl-2020} P. Frankl, Maximum degree and diversity in intersecting hypergraphs, J. Combin. Theory Ser. B 144 (2020) 81--94.
		
		\bibitem{Frankl-Kupavskii-2021} P. Frankl and A. Kupavskii, Diversity, J. Combin. Theory Ser. A 182 (2021) 105468.	
		
		\bibitem{Frankl-Kupavskii-2025}P. Frankl and A. Kupavskii, The Hajnal--Rothschild problem,  arXiv:2502.06699.
			
		\bibitem{Frankl-2025} P. Frankl, Concise proofs concerning the size and structure of large intersecting $k$-graphs, Acta Math. Hungar. 177 (2025) 247--264.
		
		\bibitem{Ge-Xu-Zhao}G. Ge, Z. Xu and X. Zhao, Algebraic approach to stability results for Erd\H{o}s--Ko--Rado theorem, arXiv:2410.22676.
		
		\bibitem{H-R}A. Hajnal and B. Rothschild, A Generalization of the Erd\H{o}s--Ko--Rado Theorem on Finite Set Systems, J. Combin. Theory Ser. A 15 (1973), 359--362.
		
		\bibitem{Han-Kohayakawa}
		J. Han and Y. Kohayakawa, The maximum size of a non-trivial intersecting uniform family that is not a subfamily of the Hilton-Milner family, Proc. Amer. Math. Soc. 145(1) (2017) 73--87.
		
		\bibitem{Hilton-Milner-1967}
		A.J.W. Hilton and E.C. Milner, Some intersection theorems for systems of finite sets, Quart. J. Math. Oxf. 2 (18) (1967) 369--384.
		
		\bibitem{Huang-Kupavskii-2026} Y. Huang and A. Kupavskii, Structure and properties of large cross-intersecting families, arXiv:2606.20085.
		
		 \bibitem{Huang-Peng}
		Y. Huang and Y. Peng, Stability of intersecting families, European J. Combin. 115 (2024) 
		103774.
		
		\bibitem{Ihringer-Kupavskii} F. Ihringer and A. Kupavskii, Structure of $t$-intersecting families of vector spaces, arXiv:2605.02698.
		
		\bibitem{Keevash} P. Keevash, Shadows and intersections: Stability and new proofs, Adv. Math. 218 (2008) 1685--1703.
		
		\bibitem{Keevash-Long-2020} P. Keevash and E. Long, Stability for vertex isoperimetry in the cube, J. Combin. Theory Ser. B 145 (2020) 113--144.
		
		\bibitem{Keller-Kupavskii-Lifshitz-Sheinfeld} N. Keller, A. Kupavskii, N. Lifshitz and O. Sheinfeld, A complete intersection theorem for large permutation groups, 	arXiv:2607.00318.
		
		\bibitem{Kostochka-Mubayi}
		A. Kostochka and D. Mubayi, The structure of large intersecting families, Proc. Amer. Math. Soc. 145 (2017) 2311--2321.
		
		\bibitem{Kupavskii-2018}
		A. Kupavskii, Structure and properties of large intersecting families, arXiv:1810.00920.
		
		\bibitem{Kupavskii-2023} A. Kupavskii, Intersection theorems for uniform subfamilies of hereditary families, arXiv:2311.02246.
			
		\bibitem{Kupavskii-2024-perm}A. Kupavskii, An almost complete $t$-intersection theorem for permutations, arXiv:2405.07843.
			
		\bibitem{Kupavskii-2025}
		A. Kupavskii, Structure of non-trivial intersecting families, Proc. Amer. Math. Soc. 153 (2025) 2863--2873. 
		
		\bibitem{Kupavskii-2026}
		A. Kupavskii, Erd\H{o}s--Ko--Rado type results for partitions via spread approximations, European J. Combin. 132 (2026) 104288.
		
		\bibitem{Kupavskii-Zakharov-2018} A. Kupavskii and D. Zakharov, Regular bipartite graphs and intersecting families, J. Combin. Theory Ser. A 155 (2018) 180--189.
		
		\bibitem{Kupavskii-2024}
		A. Kupavskii and D. Zakharov, Spread approximations for forbidden intersections problems, Adv. Math. 445 (2024) 109653. 

		\bibitem{Lemons-Palmer} N. Lemons and C. Palmer, The unbalance of set systems, Graphs  Combin. 24 (2008) 361--365.
		
		\bibitem{Stoeckl} M. Stoeckl, Lecture notes on recent improvements for the sunflower lemma, 2022, \url{https://mstoeckl.com/notes/research/sunflower_notes.html}.
		
		\bibitem{Tao} T. Tao, The sunflower lemma via Shannon entropy, 2020, \url{https://terrytao.wordpress.com/2020/07/20/the-sunflower-lemma-via-shannon-entropy/}.
		
		\bibitem{Wen-Lv-2026+}J. Wen and B. Lv, A unified approach to cross-intersection problems with applications to Hilton--Milner type theorems and stability, arXiv:2607.03315.
		
		\bibitem{Wen-Lv-II}
		J. Wen and B. Lv, Structure of large $t$-intersecting families II: $t$-covers and $t$-diversity, in preparation.
		
		\bibitem{Wilson-1984}
		R.M. Wilson, The exact bound in the Erd\H{o}s-Ko-Rado theorem, Combinatorica 4 (1984) 247--257.
		
		\bibitem{Wu et al.}
		Y. Wu, Y. Li, L. Feng, J. Liu and G. Yu, Maximal intersecting families revisited, Discrete Math. 349 (2026) 114654.
		
		\bibitem{Yao-Liu-Wang} T. Yao, D. Liu and K. Wang, Extremal $t$-intersecting families for finite sets with
		$t$-covering number at least $t+2$, arXiv:2602.14129.
		
\end{thebibliography}
\end{document}